\newtheorem{thm}{Theorem}
\newtheorem{prop}[thm]{Proposition}
\newtheorem{lem}[thm]{Lemma}
\newtheorem{cor}[thm]{Corollary}
\theoremstyle{definition}
\theoremstyle{remark}
\newtheorem{remark}{Remark}
\newcommand{\Z}{\mathbb{Z}}
\newcommand{\zz}[1]{\mathbb #1}
\newcommand{\xclass}[1]{\langle #1 \rangle}
\begin{document}

\title[Maximal Displacement of  Critical Branching Random Walk]{On the
Maximal Displacement of a Critical Branching Random Walk} 
\author{Steven P. Lalley}
\address{Department of Statistics, University of Chicago, Chicago, IL 60637}
\email{lalley@galton.uchicago.edu}
\urladdr{www.statistics.uchicago.edu/$\sim$lalley}
\author{Yuan Shao}
\address{Department of Mathematics, University of Chicago, Chicago, IL 60637}
\email{shaoyuan3319@gmail.com}
\date{\today}
\subjclass{Primary 60J80, secondary  60J15}
\thanks{First author supported by NSF grant DMS  -
0805755}\keywords{branching random walk, critical branching process,
nonlinear convolution equation, Feynman-Kac formula}

\begin{abstract}
We consider a branching random walk initiated by a single particle at
location $0$ in which particles alternately
reproduce according to the law of a Galton-Watson process and disperse
according to the law of a driftless random walk on the integers. When
the offspring distribution has mean $1$ the branching process is
critical, and therefore dies out with probability $1$. We prove that
if the particle jump distribution has mean zero, positive finite variance
$\eta^{2}$, and finite $4+\varepsilon$ moment, and if the offspring
distribution has positive variance $\sigma^{2}$ and finite third moment then the
distribution of the rightmost position $M$ reached by a particle of
the branching random walk satisfies $P\{M \geq x\}\sim 6\eta^{2}/
(\sigma^{2}x^{2})$ as $x \rightarrow \infty$. We also prove a
conditional limit theorem for the distribution of the rightmost
particle location at time $n$ given that the process survives for $n$
generations. 
\end{abstract}

\maketitle

\section{Introduction}

It has been known since the work of McKean \cite{mckean:kpp} that the
time evolution of a one-dimensional branching Brownian motion is
intimately connected with the behavior of solutions of the Fisher-KPP
equation
\begin{equation}\label{eq:kpp}
	\frac{\partial u}{\partial t}=\frac{1}{2}
	\frac{\partial^{2}u}{\partial x^{2}} +f (u).
\end{equation}
McKean observed that the cumulative distribution function of the
position $R_{t}$ of the rightmost particle at time $t$ in a
one-dimensional branching Brownian motion obeys equation
\eqref{eq:kpp}.  In the simplest case, where particles move
independently along Brownian trajectories and undergo simple binary
fission following independent, exponentially distributed gestation
times, the function $f$ is given by $f (u)=u^{2}-u$, the case
originally studied by Fisher \cite{fisher}; more general branching
mechanisms lead to more general functional forms. In general, when the
underlying branching process is \emph{supercritical}, the solution of
\eqref{eq:kpp} with Heaviside initial data approaches a traveling wave
with a positive asymptotic velocity, and thus, in particular, the
distribution of $R_{t}$, when centered at its median $m_{t}$,
converges weakly to the distribution described by the wave. There is
now a considerable literature surrounding problems connected with this
phenomenon: see, e.g., \cite{bramson} for the precise asymptotic
behavior of the function $t\mapsto m_{t}$; \cite{lalley-sellke} for a
proof that the traveling wave is a mixture of extreme value
distributions; and \cite{aidekon-et-al}, \cite{arguin-et-al:MR3129797}
and \cite{arguin-et-al} for proofs that the distribution of the entire
point process of particle locations, when centered at $m_{t}$,
converges in law as $t \rightarrow \infty$.  Generalizations of some
of these results to supercritical branching random walks are given in
\cite{bachmann} and \cite{bramson-zeitouni}.

When the branching mechanism is \emph{critical} (that is, when the
mean number of offspring particles at a particle death is $1$), the
nature of the process $R_{t}$ is entirely different, because in this
case the process must ultimately die out, with probability one. Hence,
it is more natural in the critical case to ask about the distribution  of 
\begin{equation}
	M:=\max_{t<\infty}R_{t},
\end{equation}
the rightmost point ever reached by a particle of the branching
process. Interest in the distribution of $M$ stems in part from its
relevance to evolutionary biology, where critical branching Brownian
motion has been used as a model for the spatial diffusion of alleles
with no selective advantage or disadvantage: see, for instance,
\cite{crump-gillespie}, \cite{fleischman-sawyer}, and the references
therein.  For critical (or subcritical) branching Brownian motion the
distribution function $\omega  (x)=P\{M\leq x \}$ of $M$ satisfies the
ordinary differential equation
\begin{equation}\label{eq:fleischman-sawyer}
	\frac{1}{2} \omega '' (x)=-\psi (\omega  (x)),
\end{equation}
where $\psi (\omega )$ is the probability generating function of the
offspring distribution of the branching process. This differential
equation is similar to that satisfied by the traveling wave(s) for the
Fisher-KPP equation, but differs in the nonlinear term $\psi (\omega
)$; this leads to solutions of a very different character, reflecting
the qualitative differences between critical and supercritical
branching.  By explicitly integrating the equation
\eqref{eq:fleischman-sawyer}, Fleischman and Sawyer
\cite{fleischman-sawyer}, sec.~3, obtained precise asymptotic estimates
for the tail of the distribution of $M$ for critical branching
Brownian motion: in particular, they proved that if the offspring
distribution has mean $1$, positive variance $\sigma^{2}$, and finite third moment
then
\begin{equation}\label{eq:fleischman-sawyer-asymptotics}
	P\{M\geq x \}\sim \frac{6}{\sigma^{2}x^{2}} \quad
	\text{as}\;\; x \rightarrow \infty.
\end{equation}
(In the case of ``double-or-nothing'' branching, where particles
produce either $0$ or $2$ offspring with probability $1/2$, the
solution to the differential equation with the appropriate boundary
conditions has the closed form $1-w (x)=6/ (1+x)^{2}$, as is easily
checked.)

Our primary objective in this paper is to show that under suitable
hypotheses the asymptotic formula
\eqref{eq:fleischman-sawyer-asymptotics} holds generally for
\emph{critical, driftless branching random walks}. For branching
random walks, unlike branching Brownian motion, there are no ordinary
differential equations governing the law of $M$, so one cannot write
explicit integral formulas for the distribution of $M$, as one can for
branching Brownian motion. \footnote{To appreciate the difficulty of
generalizing from branching Brownian motion to branching random walk,
one should compare with the recent history of progress on the
supercritical case. For supercritical branching Brownian motion the
law of the maximal displacement $R_{t}$ is governed by the KPP
equation, and so the weak convergence of $R_{t}-\text{med} (R_{t})$
follows by monotonicity and comparison techniques for KPP-type
parabolic equations, as has been understood since the 1970s; but for
supercritical branching random walks, completely new techniques
\cite{bramson-zeitouni}  were required to
prove even that the distributions of the maximal displacements after
$n$ steps are tight.}

 For ease of
exposition we will limit our study to discrete-time branching random
walks on the integer lattice $\zz{Z}$ in which the reproduction and
dispersal mechanisms are independent, with dispersal \emph{preceding}
reproduction. Thus, in each generation, particles
\begin{enumerate}
\item [(A)]  jump
(independently of one another) to new sites, with jumps following a
\emph{mean-zero}, \emph{finite variance}, jump distribution
$F_{RW}:=\{a_{x} \}_{x\in \zz{Z}}$; 
\item [(B)] then reproduce as in a simple Galton-Watson process,
according to a fixed offspring distribution $F_{GW}:=\{p_{k} \}_{k\geq
0}$ with mean $1$ and finite variance.
\end{enumerate}
We will generally assume that the branching random walk is initiated
by a single particle located at the origin $0\in \zz{Z}$. For a formal
construction of a branching random walk following rules (A)--(B), see,
for instance, \cite{kesten}. The locations of the particles in
generation $n$ will be denoted by $X_{n,i}$, where $i\leq N_{n}$ and
$N_{n}$ is the total number of  particles in the $n$th generation. 
Our interest is in the distribution of the \emph{maximal displacement}
\begin{equation}\label{eq:max-displacement}
M=\max_{n \geq 0} M_{n} \quad \text{where} \;\; M_{n}=
\max_{i=1,2,\cdots,N_n} X_{n,i}.
\end{equation}

\begin{remark} \label{rmk}
Observe that only the particle locations at the end of each
reproduction step are taken into account in the definition of $M$:
thus, for instance, if the initial particle at $X_{0,1}=0$ were to
jump to site $x=1$ and then produce no offspring, the maximal
displacement would be $M=0$, not $M=1$.
\end{remark}

\begin{remark}\label{rmk:2}
Many authors (e.g., \cite{kesten}, \cite{benjamini-peres}) consider
branching random walks in which the order of the reproduction and jump
steps is opposite to that specified above. For our purposes it is more
convenient to have the jump step precede the reproduction step, as
this leads to more compact formulations of the main result and the
nonlinear convolution equation \eqref{fund} below. It should be
clear that analogous results can be deduced for branching random
walks of the type considered in \cite{kesten}, \cite{benjamini-peres},
by conditioning on the first jump step. See
Remark~\ref{remark:alternate-steps} in sec.~\ref{sec:conv-eqn} below.
Although it is less obvious, our results (and their proofs) also
generalize to branching random walks in which the reproduction and
particle motion are not independent (as, for instance, in the
branching random walks discussed in \cite{biggins}), and to branching
random walks in which the step distribution is supported by $\zz{R}$
rather than $\zz{Z}$. Thus, in particular, our results extend to
critical branching Brownian motion, since branching Brownian motion
observed at integer multiples of a fixed time $\delta >0$ is a
branching random walk. In the interest of clarity,  we have
chosen not to formulate and prove our results in the greatest possible
generality.
\end{remark}

\begin{thm} \label{mainresultgeneral}
Assume that the step distribution $F_{RW}=\{a_k\}_{k \in \Z}$  has
mean $0$, positive variance $\eta^{2}$, and  finite $r$-th moment for some
$r>4$.  Assume also that the offspring distribution $F_{GW}=\{p_{k}
\}_{k\geq 0}$ has mean $1$, positive variance $\sigma^{2}$, and finite third
moment. Then
\begin{equation}\label{eq:main-result}
	P\{M\geq x \}\sim \frac{6\eta^{2}}{\sigma^{2}x^{2}} \quad
	\text{as}\;\; x \rightarrow \infty.
\end{equation}
\end{thm}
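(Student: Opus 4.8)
The plan is to read off the tail of $M$ from the nonlinear convolution equation \eqref{fund} satisfied by its distribution function, treating \eqref{fund} as a discrete, stochastic analogue of the Fleischman--Sawyer ODE \eqref{eq:fleischman-sawyer}. Put $u(x)=P\{M\le x\}$ and $v(x)=1-u(x)$; since the critical process dies out, $M<\infty$ almost surely, so $v(x)\downarrow 0$, while $u\equiv 0$ on $\{x<0\}$. Writing $\varphi$ for the offspring probability generating function, \eqref{fund} reads, for $x\ge 0$,
\[
	v(x)=\sum_{y\in\Z}a_{y}\,v(x-y)\;-\;\sum_{y\in\Z}a_{y}\,g\bigl(v(x-y)\bigr),
\]
where $g(t):=\varphi(1-t)-(1-t)\ge 0$ by convexity of $\varphi$, and where $\varphi'(1)=1$, $\varphi''(1)=\sigma^{2}$, together with the finite third moment of $F_{GW}$, give $g(t)=\tfrac{\sigma^{2}}{2}t^{2}+O(t^{3})$ as $t\downarrow 0$. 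The operator $v\mapsto\sum_{y}a_{y}v(\cdot-y)-v$ on the left is the discrete second-difference operator of $F_{RW}$; because $F_{RW}$ is centred with variance $\eta^{2}$ (and has more than four moments, to control the Taylor remainder) it acts on slowly varying functions like $\tfrac{\eta^{2}}{2}v''$. Thus \eqref{fund} is heuristically $\tfrac{\eta^{2}}{2}v''=\tfrac{\sigma^{2}}{2}v^{2}+o(v^{2})$ on $(0,\infty)$ with $v,v'\to 0$ at $+\infty$; the positive solutions of $\tfrac{\eta^{2}}{2}v''=\tfrac{\sigma^{2}}{2}v^{2}$ tending to $0$ are $v(x)=\bigl(a+\sqrt{\sigma^{2}/(6\eta^{2})}\,x\bigr)^{-2}$ with $a>0$, all $\sim 6\eta^{2}/(\sigma^{2}x^{2})$, and since $P\{M\ge x\}=v(x-1)$ this is exactly \eqref{eq:main-result}. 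The whole problem is to show that the genuine discrete, nonlinear, half-line equation is rigid enough to force these asymptotics.

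\emph{Feynman--Kac representation and comparison.} Let $(S_{n})$ be the random walk with step law $F_{RW}$ and $\tau_{x}=\inf\{n\ge 1:S_{n}>x\}$, almost surely finite by recurrence. Iterating \eqref{fund} along the walk and applying optional stopping at $\tau_{x}$ (legitimate since $v$ is bounded and $g\ge 0$) gives a Feynman--Kac identity for $1-v(x)$ which, after subtracting off its $x\to\infty$ limit (using $v(\infty)=0$), takes the potential-theoretic form $v(x)=\sum_{z\ge 0}\mathcal{K}(x,z)\,g(v(z))$ with a nonnegative kernel $\mathcal{K}$ --- a difference of Green's functions of the walk killed at first exit from a half-line. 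The sharp local central limit theorem for $F_{RW}$ (this is where the hypothesis of more than four moments on $F_{RW}$ is convenient), together with renewal control of the overshoot of $(S_{n})$ past level $x$, gives $\mathcal{K}(x,z)=\tfrac{2}{\eta^{2}}(z-x)^{+}$ up to lower-order errors, so that, modulo negligible terms, $v(x)=\tfrac{\sigma^{2}}{\eta^{2}}\sum_{z>x}(z-x)\,v(z)^{2}$ --- the discrete form of the twice-integrated ODE. Since $\mathcal{K}\ge 0$ and $g$ is nondecreasing on $[0,1]$, this is a monotone fixed-point equation, and hence obeys a comparison principle: a supersolution of it dominates $v$ from above, a subsolution bounds it from below.

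\emph{A priori $x^{-2}$ bounds --- the main obstacle.} The qualitative fact $v(x)\to 0$ must be upgraded to $c_{1}x^{-2}\le v(x)\le c_{2}x^{-2}$ for all large $x$. The engine is a multi-stage bootstrap: feeding a crude decay estimate into the representation above and using $g(v)\le Cv^{2}$ improves the exponent, and iterating while sandwiching $v$ between sub/supersolutions of the form $c\,(x+b)^{-2}$ --- modified near the boundary, where $v$ is only known to be bounded and $g(v)=O(1)$ --- drives the exponent to $2$. Closing this loop is delicate for two reasons: the operator in \eqref{fund} is nonlocal, so atypically large jumps of $(S_{n})$ can carry it over level $x$ or over the origin, and estimating the resulting overshoot and boundary-layer contributions is exactly what consumes the extra moment hypotheses (the $>4$th moment of $F_{RW}$, and the third moment of $F_{GW}$, the latter rendering the $O(t^{3})$ term in $g$ negligible on the scale $v\asymp x^{-2}$); and $c\,(x+b)^{-2}$ is only an approximate sub/supersolution, the discrete second difference leaving an $O(x^{-5})$ remainder from the third moment of $F_{RW}$ together with lower-order terms, which must be absorbed by taking $c$ slightly off $6\eta^{2}/\sigma^{2}$ and tuning $b$.

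\emph{Identification of the constant, and the conditional limit theorem.} Once $v\asymp x^{-2}$ is in hand, write $v(x)=x^{-2}\ell(x)$ and substitute into $v(x)\approx\tfrac{\sigma^{2}}{\eta^{2}}\sum_{z>x}(z-x)v(z)^{2}$; since the sum concentrates on $z$ comparable to $x$, matching the two sides forces $\ell(x)$ to converge to the unique nonzero root $c$ of $c=\sigma^{2}c^{2}/(6\eta^{2})$, namely $c=6\eta^{2}/\sigma^{2}$. Equivalently, one reruns the comparison of the previous step with $c=6\eta^{2}/\sigma^{2}\pm\varepsilon$ for every $\varepsilon>0$ to obtain $\limsup_{x\to\infty}x^{2}v(x)\le 6\eta^{2}/\sigma^{2}\le\liminf_{x\to\infty}x^{2}v(x)$. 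Since $P\{M\ge x\}=v(x-1)$, this proves \eqref{eq:main-result}. The companion conditional limit theorem --- for the rightmost particle location at time $n$ given survival to generation $n$ --- follows from the same circle of ideas applied to the time-indexed generating functions of $M_{n}$, using Kolmogorov's estimate $P\{N_{n}>0\}\sim 2/(\sigma^{2}n)$ and Yaglom's conditional limit law for the critical Galton--Watson process.
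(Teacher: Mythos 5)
Your route is genuinely different from the paper's: you discretize the Fleischman--Sawyer integration directly, linearizing \eqref{fund} into (random-walk Laplacian of $v$) $=$ (nonlinear source), inverting on the half-line via a Green's-function / twice-integrated representation $v(x)\approx\frac{2}{\eta^{2}}\sum_{z>x}(z-x)g(v(z))$, and squeezing $v$ between barriers $c(x+b)^{-2}$. The paper instead builds a \emph{multiplicative} Feynman--Kac martingale (Proposition~\ref{ynmtg}), uses the invariance principle to show that every subsequential limit of the self-normalized ratios $u(x+y/\sqrt{u(x)})/u(x)$ satisfies the Brownian identity \eqref{phieqeq}, hence by Kac's theorem the ODE $\phi''=\sigma^{2}\phi^{2}/\eta^{2}$ with $\phi(0)=1$, $\phi(\infty)=0$, whose unique solution is $(1+\sigma y/(\sqrt{6}\eta))^{-2}$; a separate elementary liminf/limsup argument on $w(x)=x^{2}u(x)$ (Lemma~\ref{lemma:noSkips}) then converts the scaling relation into \eqref{eq:main-result}. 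The normalization is the whole point of that architecture: the ratios lie in $[0,1]$ automatically, so the paper never needs two-sided $x^{-2}$ bounds a priori.

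Your outline leaves genuine gaps at exactly the places where the difficulty lives. First, the comparison principle: monotonicity of $v\mapsto\sum_{z}\mathcal{K}(\cdot,z)\,g(v(z))$ does not imply that a supersolution dominates the particular solution $v$; monotone fixed-point equations admit ordered families of (approximate) fixed points --- indeed $c(x+b)^{-2}$ for every $b>0$ is one --- so you must identify $v$ as, say, the monotone limit of the finite-generation truncations and run the induction in the time variable, or prove uniqueness in the relevant order interval. Second, the two-sided bound $v\asymp x^{-2}$ and the ``negligible'' errors. The representation $v(x)\approx\frac{2}{\eta^{2}}\sum_{z>x}(z-x)g(v(z))$ is obtained only after subtracting the $x\to\infty$ limit of the killed-walk Green's identity, which presupposes $\sum_{z}z\,v(z)^{2}<\infty$; and even granting the consequence $v(z)\le C/z$, that pointwise bound is not summable against $(z-x)$, so each further stage of the bootstrap must carry the integrated information along --- none of this is spelled out. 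Moreover the corrections to $\mathcal{K}(x,z)=\frac{2}{\eta^{2}}(z-x)^{+}$ (renewal-function terms, the overshoot at the killing boundary, the boundary layer near $0$ where $v$ is not small) enter at precisely the order that determines the constant $6\eta^{2}/\sigma^{2}$, so ``modulo negligible terms'' is the theorem rather than a remark. These are the obstacles Kesten pointed to in the passage quoted in the introduction; your program is plausible but, as written, does not surmount them, whereas the paper's normalization-plus-invariance-principle argument is designed to bypass them.
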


This will be proved in section~\ref{sec:main-result}. The result can
also be reformulated as a statement about the distribution of the
maximal displacement of a branching random walk initiated by a large
number $n$ of particles at the origin. Such a branching random walk is
just the superposition (sum) of $n$ independent copies of
the branching random walk in Theorem~\ref{mainresultgeneral}, so the
event that its maximal displacement is $\leq \sqrt{n}x$ is the intersection of
the events that each of the $n$ constituent branching random walks has
maximal displacement $\leq \sqrt{n}x$. For fixed $x>0$ the target
points $\sqrt{n}x \rightarrow \infty$ as $n \rightarrow \infty$, so
the asymptotic formula \eqref{eq:main-result} yields the following
corollary.

\begin{cor}\label{corollary:max-displacement-superposition}
Let $M^{n}$ be the maximal displacement of a branching random walk
initiated by $n$ particles at the origin at time $0$. Then
for any $x>0$,
\begin{equation}\label{eq:max-displacement-superposition}
	\lim_{n \rightarrow \infty}P\{M^{n} \geq
	\sqrt{n}x\}= 1-\exp \{-C/x^{2} \} 
	\quad \text{where} \quad C=\frac{6\eta^{2}}{\sigma^{2}}.
\end{equation}
\end{cor}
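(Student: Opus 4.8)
The plan is to reduce the statement to Theorem~\ref{mainresultgeneral} by the branching property together with independence. A branching random walk started by $n$ particles at the origin is the superposition of $n$ independent copies $M^{(1)},\dots,M^{(n)}$ of the single-particle branching random walk of Theorem~\ref{mainresultgeneral}, and its maximal displacement is $M^{n}=\max_{1\le i\le n}M^{(i)}$. Consequently, for any $t>0$,
\[
P\{M^{n}<t\}=\prod_{i=1}^{n}P\{M^{(i)}<t\}=\bigl(1-P\{M\ge t\}\bigr)^{n},
\]
where $M$ denotes the maximal displacement of a single copy.

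Next I would specialize to $t=\sqrt{n}\,x$ with $x>0$ fixed. Since $\sqrt{n}\,x\to\infty$ as $n\to\infty$, Theorem~\ref{mainresultgeneral} applies at this threshold and gives
\[
p_{n}:=P\{M\ge \sqrt{n}\,x\}=\frac{6\eta^{2}}{\sigma^{2}(\sqrt{n}\,x)^{2}}\,(1+o(1))=\frac{C}{n x^{2}}\,(1+o(1)),\qquad C=\frac{6\eta^{2}}{\sigma^{2}},
\]
so that $p_{n}\to 0$ and $np_{n}\to C/x^{2}$. Then from $p_{n}\to 0$ one has $\log(1-p_{n})=-p_{n}+O(p_{n}^{2})$, hence $n\log(1-p_{n})=-np_{n}+O(np_{n}^{2})\to -C/x^{2}$, giving $P\{M^{n}<\sqrt{n}\,x\}=(1-p_{n})^{n}\to e^{-C/x^{2}}$ and therefore $P\{M^{n}\ge \sqrt{n}\,x\}\to 1-e^{-C/x^{2}}$, as claimed.

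There is no real obstacle here beyond Theorem~\ref{mainresultgeneral} itself: the entire content is in that tail estimate, and the corollary is a one-line consequence of the Poisson-type approximation $(1-p_{n})^{n}\to e^{-\lim np_{n}}$. The only point requiring a word of care is that we invoke the tail asymptotic at the moving level $\sqrt{n}\,x$ rather than at a fixed level; but because $x$ is held fixed this is simply the $x\to\infty$ regime of \eqref{eq:main-result}, and no uniformity in $x$ is needed for the pointwise-in-$x$ assertion of the corollary. (Uniform convergence on compact subsets of $(0,\infty)$ would follow as well if one tracked the error term in \eqref{eq:main-result} uniformly in the threshold, but that is not asserted.)
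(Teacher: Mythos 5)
Your proposal is correct and is exactly the argument the paper gives (in the paragraph preceding the corollary): the $n$-particle process is a superposition of independent copies, so $P\{M^{n}<\sqrt{n}x\}=(1-p_{n})^{n}$ with $p_{n}=P\{M\ge\sqrt{n}x\}\sim C/(nx^{2})$ by Theorem~\ref{mainresultgeneral} applied at the level $\sqrt{n}x\to\infty$. You have merely spelled out the elementary Poisson-type limit $(1-p_{n})^{n}\to e^{-C/x^{2}}$ in more detail than the paper does.
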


Theorem~\ref{mainresultgeneral} is closely related to the main results
of Kesten \cite{kesten}, who considers critical branching random walk
conditioned to survive for a large number of generations. (X. Zheng
\cite{zheng} has also considered the case where the random walk is
assumed to have a ``small'' drift, but this leads to entirely
different asymptotics. Aidekon \cite{aidekon:killed} considers
branching random walks that are rendered critical by virtue of killing
at the origin: here again the asymptotic behavior of the maximum behaves
quite differently.)  Kesten shows that under the same hypotheses as in
Theorem~\ref{mainresultgeneral}, for any fixed $\beta >0$, given that
the branching process survives for $\beta n$ generations, the
conditional distribution of $\max_{k\leq n}M_{k}/\sqrt{n}$ converges
as $n \rightarrow \infty$. He does not, however, identify the limit
distribution. He remarks on the result of Sawyer and
Fleischman:

\begin{quote}
We have not proved [equation \eqref{eq:main-result}] in our
setting. It is not clear at the moment how the methods of Sawyer and
Fleischman, which rely on differential equations, can be carried over
to the discrete setting; differential equations will have to be
replaced by recurrence relations.
\end{quote}
 
\noindent 
Our main technical innovation  will be to show how to
relate these ``recurrence relations'' to the differential equation
\eqref{eq:fleischman-sawyer}. This will be accomplished by exploiting
Feynman-Kac formulas. Our approach applies also to time-dependent
Feynman-Kac problems, and leads in particular to information about the
distribution of the random variable $M_{n}$. As an illustration, we
will prove in section~\ref{sec:time-dependent} the following
conditional limit theorem.

\begin{thm}\label{theorem:conditional}
Under the hypotheses of Theorem~\ref{mainresultgeneral}, the
conditional distribution of $M_{n}/\sqrt{n}$, given that the branching
process survives for $n$ generations, converges weakly as $n
\rightarrow \infty$ to a nontrivial limit distribution $G$ that
depends only on the variances $\sigma^{2}$ and $\eta^{2}$ of the
offspring and step distributions.
\end{thm}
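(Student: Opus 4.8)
The plan is to recast the weak-convergence statement as the asymptotics of the functions $z_n(x):=P\{M_n>x\}=1-P\{M_n\le x\}$ (with the convention $M_n:=-\infty$ on the extinction event) and then to analyze $z_n$ by the Feynman-Kac device used for Theorem~\ref{mainresultgeneral}, now in time-dependent form. Let $Z_n=\{N_n\ge 1\}$ denote survival to generation $n$. Since $\{M_n>t\}\subseteq Z_n$ for every $t\ge0$, one checks that for all $x\in\R$,
\[
	P\{M_n/\sqrt n\le x\mid Z_n\}=1-\frac{z_n(x\sqrt n)}{P\{Z_n\}} .
\]
By Kolmogorov's classical estimate for critical Galton-Watson processes, $P\{Z_n\}\sim 2/(\sigma^2 n)$. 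Hence it suffices to show that $n\,z_n(x\sqrt n)\to\Phi(x)$ at every continuity point of a bounded nonincreasing function $\Phi$ with $\Phi(-\infty)=2/\sigma^2$ and $\Phi(+\infty)=0$; the limit law is then $G=1-\frac{\sigma^2}{2}\Phi$, the stated boundary values make $G$ a nondegenerate distribution function, and we shall see that $\Phi$ — hence $G$ — depends only on $\sigma^2$ and $\eta^2$.

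The one-step conditioning that underlies the nonlinear convolution equation~\eqref{fund} yields, in time-indexed form, the recurrence
\[
	z_n=A\,g(z_{n-1}),\qquad z_0=\mathbf{1}_{(-\infty,0)},\qquad g(z):=1-\varphi(1-z),
\]
where $\varphi(s)=\sum_k p_ks^k$ is the offspring generating function and $(Af)(x)=\sum_k a_kf(x-k)$ is convolution by the step law. Because $\varphi(1)=\varphi'(1)=1$, $\varphi''(1)=\sigma^2$ and $\varphi'''(1)<\infty$, we may write $g(z)=z-h(z)$ with $0\le h(z)\le z$ on $[0,1]$ and $h(z)=\frac{\sigma^2}{2}z^2(1+O(z))$. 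Two a priori bounds drive everything: $z_n(x)\le P\{M\ge x\}$, which by Theorem~\ref{mainresultgeneral} is $\sim 6\eta^2/(\sigma^2x^2)$ as $x\to\infty$, and $z_n(x)\le P\{Z_n\}\le C/n$; the second is what makes $n\,z_n$ the quantity of the right order of magnitude. Factoring $h(z)=z\,V(z)$ with $V(0)=0$, $V(z)=\frac{\sigma^2}{2}z(1+O(z))$ and $0\le V<1$ on $[0,1)$, the recurrence becomes $z_n=A[z_{n-1}(1-V(z_{n-1}))]$, and iterating it produces the time-inhomogeneous Feynman-Kac formula
\[
	z_n(x)=(1-p_0)\,E_x\Big[\mathbf{1}\{S_n<0\}\prod_{k=1}^{n-1}\big(1-V(z_{n-k}(S_k))\big)\Big],
\]
where $(S_k)_{k\ge0}$ is the random walk with step law $F_{RW}$ started at $S_0=x$ (the constant $1-p_0$ absorbs the combination of the last product factor with $z_0(S_n)$).

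Now put $x=\xi\sqrt n$; by Donsker's theorem $S_{\lfloor nt\rfloor}/\sqrt n\Rightarrow\xi+\eta B_t$ with $B$ a standard Brownian motion, and the a priori bounds let us replace the product by $\exp\{-\sum_{k=1}^{n-1}V(z_{n-k}(S_k))\}$. The exponent is not $O(1)$: over an order-$n$ range of indices $k$ one has $z_{n-k}(S_k)\sim 2/(\sigma^2(n-k))$ — because $z_j(y)\sim 2/(\sigma^2 j)$ whenever $y\ll-\sqrt j$ — and hence $V(z_{n-k}(S_k))\asymp 1/(n-k)$, so that
\[
	\sum_{k=1}^{n-1}V(z_{n-k}(S_k))=\log n+R_n+o_P(1),
\]
with $R_n$ converging, jointly with $\mathbf{1}\{S_n<0\}$, to a path functional $R(\xi,B)$ — concretely to $\lim_{\varepsilon\downarrow0}\big(\frac{\sigma^2}{2}\int_\varepsilon^1 t^{-1}\Phi\big(t^{-1/2}(\xi+\eta B_{1-t})\big)\,dt+\log\varepsilon\big)$, the logarithmic divergence at $0$ being cancelled exactly because $\Phi(-\infty)=2/\sigma^2$. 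Cancelling $\log n$ against the prefactor $n$ gives $n\,z_n(\xi\sqrt n)\to\Phi(\xi)=(1-p_0)E[\mathbf{1}\{\xi+\eta B_1<0\}e^{-R(\xi,B)}]$. Equivalently, $\Phi$ is the self-similar profile of the parabolic equation $\partial_tv=\frac{\eta^2}{2}\partial_{xx}v-h(v)$ — the time-dependent counterpart of~\eqref{eq:fleischman-sawyer} — started from $\mathbf{1}_{(-\infty,0)}$: writing $v(t,x)=t^{-1}\Phi(x/\sqrt t)$ and letting $t\to\infty$ (so that $t^2h(v)=\frac{\sigma^2}{2}\Phi^2(1+O(1/t))\to\frac{\sigma^2}{2}\Phi^2$) converts the equation into the profile ODE
\[
	\frac{\eta^2}{2}\Phi''+\frac{x}{2}\Phi'+\Phi-\frac{\sigma^2}{2}\Phi^2=0,\qquad \Phi(-\infty)=\frac{2}{\sigma^2},\quad \Phi(+\infty)=0,\quad \Phi(x)\sim\frac{6\eta^2}{\sigma^2x^2}\ \ (x\to\infty),
\]
the last asymptotics being forced and consistent with Theorem~\ref{mainresultgeneral}. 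This two-point problem has a unique bounded solution, namely the monotone orbit joining the two constant solutions $2/\sigma^2$ and $0$ (a comparison/maximum-principle argument), it is strictly decreasing, and it depends only on $\sigma^2$ and $\eta^2$; hence $G=1-\frac{\sigma^2}{2}\Phi$ is a genuine, nondegenerate distribution function depending only on the two variances.

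The crux is the convergence $n\,z_n(\xi\sqrt n)\to\Phi(\xi)$ itself: one must show that $n\prod_{k=1}^{n-1}(1-V(z_{n-k}(S_k)))$ converges in $L^1(P_{\xi\sqrt n})$ jointly with $\mathbf{1}\{S_n<0\}$. The delicate region is the ``time origin'' $k\approx n$ of the backward-looking walk, where $V(z_{n-k}(S_k))$ is not small and produces the entire $\log n$; there the Brownian approximation for $S$ must be supplemented by local-limit estimates, and it is precisely here — as in the proof of Theorem~\ref{mainresultgeneral} — that the moment hypotheses ($r>4$ for $F_{RW}$, finite third moment for $F_{GW}$) are used, to control the error in the invariance principle and the cubic remainder in $h$. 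A convenient way to organize matters is: first note that $\{n\,z_n(\cdot\sqrt n)\}_n$ is precompact (the functions are nonincreasing and, by the a priori bounds, uniformly bounded on each half-line $[\delta,\infty)$); then show, via the Feynman-Kac formula, that every subsequential limit solves the profile ODE above; then invoke uniqueness for that ODE to upgrade precompactness to convergence. Finally, as noted in Remark~\ref{remark:alternate-steps}, conditioning on the initial step adapts the argument to the reproduction-before-dispersal ordering and to branching Brownian motion, so that Theorem~\ref{theorem:conditional} in particular subsumes the Fleischman-Sawyer setting.
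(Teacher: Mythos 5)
Your setup --- reducing to the asymptotics of $n\,z_n(x\sqrt n)$ via Kolmogorov's estimate, the iterated one-step recursion giving a space--time Feynman--Kac product, tightness/precompactness from $z_n\le\min(P\{M\ge\cdot\},C/n)$, and the identification of the limiting parabolic equation $\partial_t v=\tfrac{\eta^2}{2}\partial_{xx}v-\tfrac{\sigma^2}{2}v^2$ --- is essentially the paper's (Propositions~\ref{proposition:td-fk} and Corollaries~\ref{corollary:uniform-equicontinuity}--\ref{corollary:brownian-scaling}). The divergence, and the gap, is at the uniqueness step. You propose to show that \emph{every subsequential limit} of $n\,z_n(\cdot\sqrt n)$ satisfies the self-similar profile ODE $\tfrac{\eta^2}{2}\Phi''+\tfrac{x}{2}\Phi'+\Phi-\tfrac{\sigma^2}{2}\Phi^2=0$. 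But the Feynman--Kac formula only yields that a space--time subsequential limit $\varphi(t,x)$ of $nv_{[nt]}(x\sqrt n)$ solves the PDE for $t>1$ with the \emph{unknown} initial datum $\varphi(1,\cdot)$; it gives no closed equation for the $t=1$ section itself. The profile ODE would follow only if the limit were already known to be self-similar, i.e.\ $\varphi(t,x)=t^{-1}\varphi(1,x/\sqrt t)$, which is equivalent to the uniqueness one is trying to prove. (Your direct computation of the product is circular for the same reason: the functional $R(\xi,B)$ is defined in terms of the limit $\Phi$ whose existence is at issue.) This is precisely the obstruction the paper flags at the start of sec.~\ref{ssec:spacetime-scaling}, and it is resolved there not by an ODE argument but by the duality \eqref{eq:duality} with super-Brownian motion: one evolves an arbitrary subsequential limit forward for a long time $t=2^m$, uses the conditioned-survival limit of super-Brownian motion to show that the rescaled long-time profile converges to $(2/\sigma^2)P\{Y_1^x(-\infty,0]\neq 0\}$ \emph{uniformly over the compact set of subsequential limits} (Corollary~\ref{corollary:sbm}), and then observes that every subsequential limit is itself such a long-time rescaling of another one (via the sections $\varphi(2^{-m},\cdot)$).

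Two further points. First, even granting self-similarity, your claimed uniqueness of the two-point boundary-value problem for the profile ODE ``by a comparison/maximum-principle argument'' is a nontrivial assertion left unproved; the nonlinearity $\Phi-\tfrac{\sigma^2}{2}\Phi^2$ is not monotone on $[0,2/\sigma^2]$ and the equation is non-autonomous, so this would need a genuine sliding or phase-plane argument (the paper sidesteps it entirely by the probabilistic identification). Second, the paper's route requires the strong a priori bounds of Lemma~\ref{lemma:strong-a-priori} ($nx^2v_n(x\sqrt n)\to0$ uniformly in $n$, and its mirror image) to justify the dominated convergence in Corollary~\ref{corollary:sbm}; your a priori bound $nz_n(x\sqrt n)\lesssim 6\eta^2/(\sigma^2x^2)$ is strictly weaker, and the stronger bound is itself a substantial argument (via the Dawson--Watanabe theorem and Theorem~\ref{mainresultgeneral}) that any complete proof must supply in some form.
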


The scaling in this theorem is the same as that in the Dawson-Watanabe
theorem (see, for instance, \cite{etheridge}, ch.~1), which can be
stated as follows. Suppose that $n$ independent copies of the
branching random walk are initiated at the origin $0\in \zz{Z}$. If
particles are assigned mass $1/n$, and if time and space are re-scaled
by factors $1/n$ and $1/\sqrt{n}$, respectively, then the
corresponding measure-valued processes $BRW_{n}(t)$ converge weakly to
\emph{super-Brownian motion} $X_{t}$.  A similar theorem holds for the
measure-valued processes $BRW^{*}_{n} (t)$ attached to branching random walk
initiated by a single particle at the origin, but conditioned to
survive for $n$ generations: under the same re-scaling of mass, time,
and space as in the Dawson-Watanabe theorem, the measure-valued
processes $BRW^{*}_{n} (t)$ converge weakly as $n \rightarrow \infty$ to a
measure-valued process $Y_{t}$. The law of this process $Y_{t}$ is
related to that of the super-Brownian motion by the \emph{Poisson
cluster representation}: if $N$ is a Poisson random variable with
mean $1$ and $Y^{1}_{t},Y^{2}_{t},\dotsc$ are independent copies of $Y
_{t}$ then $\sum_{i=1}^{N}Y^{i} _{t}$ is a super-Brownian
motion. 

The weak convergence of the measure-valued processes $BRW^{*}_{n} (t)$
by itself does not imply Theorem~\ref{theorem:conditional}, because
the location $M_{n}/\sqrt{n}$ is not a continuous function (relative
to the weak topology on measures) of $BRW^{*}_{n}
(1)$. (Lalley\cite{lalley:spatial} shows that in dimension $1$
rescaled branching random walks converge to super-Brownian motion in a
stronger topology than the weak topology implicit in the
Dawson-Watanabe theorem. However, even this topology is too weak to
make the normalized rightmost particle location a continuous
functional.) Nevertheless, it is natural to wonder whether how the
limit distribution $G$ of Theorem~\ref{theorem:conditional} is related
to the limiting measure-valued process $Y_{t}$. The proof of
Theorem~\ref{theorem:conditional} will establish that $G$ is the
distribution of the rightmost support point of the random measure
$Y_{1}$.

\begin{cor}\label{corollary:G}		
Under the hypotheses of Theorem~\ref{mainresultgeneral} (in
particular, under the assumption that the step distribution $F_{RW}$
has finite $r$th  moment for some $r>4$),
\begin{equation}\label{eq:definition-G}
	G (x) = P\{ Y_{1} [x,\infty)=0 \}.
\end{equation}
\end{cor}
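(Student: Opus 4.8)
The plan is to extract the corollary from the proof of Theorem~\ref{theorem:conditional}, where $G$ is not merely shown to exist but is identified as the value at time $1$ of a semilinear parabolic problem — the very one that governs the log-Laplace functional of the limiting measure-valued process. In proving Theorem~\ref{theorem:conditional} one analyses, by way of the nonlinear convolution equation~\eqref{fund} and a Feynman--Kac representation, the unconditioned tail functions $v_n(y):=P\{M_n>y\}$, and shows that the diffusively rescaled functions $n\,v_n(x\sqrt{n})$ converge as $n\to\infty$ to $w(1,x)$, where $w(t,x)$ is the minimal nonnegative solution of
\begin{equation*}
	\partial_t w=\frac{\eta^{2}}{2}\,\partial_{xx}w-\frac{\sigma^{2}}{2}\,w^{2},
	\qquad w(0^{+},\cdot)=+\infty\cdot\mathbf 1_{(-\infty,0)} .
\end{equation*}
Together with Kolmogorov's estimate $P\{N_n>0\}\sim 2/(\sigma^{2}n)$ this yields, at every continuity point $x$ of $G$,
\begin{equation*}
	1-G(x)=\lim_{n\to\infty}P\{M_n>x\sqrt{n}\mid N_n>0\}=\frac{\sigma^{2}}{2}\,w(1,x).
\end{equation*}

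Next I would bring in the measure-valued limit. In dimension one the rescaled branching random walks converge to a super-Brownian motion whose log-Laplace functional is governed by exactly this equation: $-\log E_{\delta_0}[\exp(-\langle\phi,X_1\rangle)]=V_1\phi(0)$ with $\partial_t V_t\phi=\frac{\eta^{2}}{2}\partial_{xx}V_t\phi-\frac{\sigma^{2}}{2}(V_t\phi)^{2}$, $V_0\phi=\phi$. Taking $\phi=\theta\,\mathbf 1_{[x,\infty)}$ and letting $\theta\uparrow\infty$ gives $-\log P_{\delta_0}\{X_1[x,\infty)=0\}=\widetilde w(1,0)$, where $\widetilde w$ solves the same equation with $\widetilde w(0^{+},\cdot)=+\infty\cdot\mathbf 1_{[x,\infty)}$; the reflection $y\mapsto x-y$ then identifies $\widetilde w(1,0)$ with the value $w(1,x)$ of the previous paragraph. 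The Poisson cluster representation recalled in the introduction expresses $X_1$ as a superposition of a Poisson number of independent copies of $Y_1$, with intensity the total mass assigned by the canonical measure to $\{X_1\neq0\}$; converting this with generating functions gives $1-P\{Y_1[x,\infty)=0\}=\frac{\sigma^{2}}{2}w(1,x)$, the constant being exactly the one in the last display. Hence $G(x)=P\{Y_1[x,\infty)=0\}$ at every continuity point of $G$.

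Finally I would promote this to all $x$ and record that $G$ is the distribution function of $R^{*}:=\sup\operatorname{supp}Y_1$. Since $d=1$, super-Brownian motion has an a.s.\ absolutely continuous law at time $1$, and because $X_1$ is a sum of independent nonnegative copies of $Y_1$ each copy is a.s.\ absolutely continuous as well; thus $P\{Y_1(\{x\})>0\}=0$ for every fixed $x$, so $x\mapsto P\{Y_1[x,\infty)=0\}$ is continuous and equals both $P\{R^{*}<x\}$ and $P\{R^{*}\le x\}$. The identity at the dense set of continuity points of $G$ then extends to all $x$ by right-continuity of $G$, which simultaneously shows $G$ is continuous and is the law of $R^{*}$ — the assertion of the corollary. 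I expect the genuinely hard input to be the convergence $n\,v_n(x\sqrt{n})\to w(1,x)$ already used in the first paragraph, which requires ruling out a vanishingly small cloud of ``pioneer'' particles running far ahead of the bulk, and is precisely where the finite $(4+\varepsilon)$-th moment hypothesis on $F_{RW}$ enters: of order $n^{2}$ particle--jump pairs occur in the conditioned tree through generation $n$, so the largest single jump is $o(\sqrt{n})$ exactly when $r>4$. As a soft check, one also gets half of the identity for free from $BRW^{*}_n(1)\Rightarrow Y_1$: the event $\{M_n\le x\sqrt{n}\}$ is exactly $\{BRW^{*}_n(1)((x,\infty))=0\}$, which is closed because $\mu\mapsto\mu((x,\infty))$ is lower semicontinuous, so the Portmanteau theorem gives $G(x)\le P\{Y_1((x,\infty))=0\}$ with no work; the matching lower bound is the one that needs the pioneer-particle control.
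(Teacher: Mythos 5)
There is a genuine gap, and it sits exactly where you flag it yourself: the first paragraph's claim that the proof of Theorem~\ref{theorem:conditional} establishes full convergence $n\,v_n(x\sqrt{n})\to w(1,x)$, with $w$ characterized as the minimal nonnegative solution of the semilinear heat equation with singular initial trace $+\infty\cdot\mathbf{1}_{(-\infty,0)}$ at $t=0^{+}$. The paper proves nothing of the sort, and for a reason: the Feynman--Kac/invariance-principle machinery only yields the PDE \eqref{eq:ivp} for $t>1$, i.e.\ after a full unit of macroscopic time, because the arguments need $\sup_x v_n(x)\to 0$ and the equicontinuity of Lemma~\ref{lemma:uniform-equicontinuity}, both of which fail as $t\downarrow 0$ where the rescaled data blow up. Consequently the paper never identifies an initial trace at $t=0$ and never invokes a ``minimal solution'' characterization (for which, in any case, no comparison principle is available at the level of the discrete convolution equation \eqref{eq:nonlinear}). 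Uniqueness of the subsequential limits is obtained by an entirely different device: the duality formula \eqref{eq:duality} is iterated over dyadic time scales, so that any subsequential limit at time $1$ is expressed through the conditioned super-process started from a subsequential limit at time $2^{-m}$, and the dependence on that unknown initial section is washed out as $m\to\infty$ using the \emph{uniform} strong a priori bounds of Lemma~\ref{lemma:strong-a-priori} (Corollary~\ref{corollary:sbm}), which in turn rest on Theorem~\ref{mainresultgeneral} and the Dawson--Watanabe theorem. Since Corollary~\ref{corollary:G} is nothing but the identification of the limit, asserting the identified, singular-initial-data convergence as input is circular: it is the statement to be proved.

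The remainder of your plan is sound and is a legitimate variant of the paper's endgame: taking $\phi=\theta\mathbf{1}_{[x,\infty)}$ and $\theta\uparrow\infty$ in the log-Laplace functional, then converting $-\log P\{X_1[x,\infty)=0\}$ into $P\{Y_1[x,\infty)\neq 0\}$ via the Poisson cluster representation with intensity $2/\sigma^2$, recovers the same constant the paper obtains by its almost-sure construction of $Y_1$ and dominated convergence; and your Portmanteau ``soft check'' correctly delivers the easy inequality $G(x)\le P\{Y_1((x,\infty))=0\}$. But these steps only transfer the problem; they do not supply the matching bound. To close the gap you would either have to reproduce the paper's dyadic self-consistency argument (Corollaries~\ref{corollary:brownian-scaling} and \ref{corollary:sbm} together with Lemma~\ref{lemma:strong-a-priori}) or give an independent proof that the rescaled entrance law of the branching random walk near $t=0$ matches that of super-Brownian motion --- which is precisely the ``pioneer particle'' control you defer, and is where the $r>4$ moment hypothesis and Lemma~\ref{lemma:strong-a-priori} do their work.
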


Are $4+\varepsilon$ moments on the step distribution really necessary
for the validity of our theorems (and Kesten's)? The following simple
heuristic argument (which with a bit of work can be made rigorous)
shows that $4-\varepsilon$ moments are not enough. Consider, for
instance, the case where $F_{RW}$ is the symmetric distribution on the
nonzero integers with discrete density
\[
	f_{RW} (x)=\frac{1}{2\zeta (5-\varepsilon )|x|^{5-\varepsilon }},
\]
which has infinite $4$th moment. Conditional on the event that the
branching random walk survives for at least $n$ generations it will
produce on the order of $n^{2}$ particles. Each of these has
conditional probability $\sim C/n^{2-\alpha }$ of placing an
offspring at distance $n^{(1+\delta)/2}$ to the right, where 
$\alpha =\varepsilon /2+\varepsilon \delta -4\delta$.
Consequently, if $\alpha >0$ then for large $n$ the probability that
all $n^{2}$ particles are located in an interval $[-A\sqrt{n},A\sqrt{n}]$
is vanishingly small.

\section{Maximal Displacement: Proof of
Theorem~\ref{mainresultgeneral}}\label{sec:main-result}

\subsection{A Nonlinear Convolution Equation}\label{sec:conv-eqn}

For the remainder of the paper we shall assume that the offspring
distribution $F_{GW}=\{p_{k}\}_{k\geq 0}$ and the jump distribution
$F_{RW}=\{a_{x} \}_{x\in \zz{Z}}$ satisfy the hypotheses of
Theorem~\ref{mainresultgeneral}: in particular, $F_{GW}$ has mean $1$,
positive variance $\sigma^{2}$, and finite third moment, and $F_{RW}$
has mean $0$, positive variance $\eta^{2}$, and finite $4+\varepsilon$
moment. The maximal displacement $M$ of the branching random walk is
defined by equation~\eqref{eq:max-displacement}, and its (tail)
distribution function will be denoted by
\begin{equation}\label{eq:tail-d-f}
	u(x) = P\{M> x \}.
\end{equation}
Clearly, $u$ is non-increasing in $x$, with jump discontinuities at
integer arguments. Also, $u (x)=1$ for all $x\leq 0$, and $\lim_{x \rightarrow
\infty}u (x)=0$. 

We begin by showing that $u$ satisfies a nonlinear convolution
equation analogous to the Fleischman-Sawyer
equation~\eqref{eq:fleischman-sawyer}. This is obtained in the
conventional manner, by conditioning on the first generation of the
branching process. Each particle of the first generation will give
rise to its own descendant branching random walk, independent of its
siblings (conditional on their locations), and in order that $M\leq x$
it must be the case that the maximal displacements of all of the
descendant BRWs, adjusted by their starting points, must be $\leq x$.
This implies that for all $x\geq 1$,
\begin{equation}\label{eq:convolution-naive}
	1-u (x) = \sum_{y\in \zz{Z}}a_{y} \sum_{k=1}^{\infty}p_{k}
	(1-u (x-y))^{k} .
\end{equation}
(Note that this is consistent with our convention regarding the
sequencing of the dispersal and reproduction steps -- see
Remark~\ref{rmk} above.) Rewriting this equation  in terms of $u$ leads
immediately to the following proposition.

\begin{prop}
$u(x)$ satisfies the nonlinear convolution equation
\begin{equation} \label{fund}
u(x) = \sum_{y \in \Z} a_y Q(u(x-y)),
\end{equation}
where $1-Q (1-s)$ is the probability generating function of the
offspring distribution $F_{GW}$, that is,
\begin{equation}\label{eq:definition-Q}
	Q(s) = 1-\sum_{i=0}^{\infty} p_i (1-s)^i, \qquad \text{for}
	\;\; 0 \leq s \leq 1. 
\end{equation}
\end{prop}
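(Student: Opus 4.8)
The plan is to obtain the functional equation \eqref{fund} in two short steps. First, a decomposition according to the first generation of the branching process produces the ``naive'' identity \eqref{eq:convolution-naive}; then a purely algebraic rewriting, using only the definition \eqref{eq:definition-Q} of $Q$ and the fact that $\{a_y\}_{y\in\Z}$ is a probability distribution, converts it into \eqref{fund}. Both steps are elementary; the only point that calls for genuine care is the first, where the branching property must be applied correctly and the bookkeeping convention of Remark~\ref{rmk} respected.

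For the first step I would fix $x \ge 1$ and condition on the pair $(Y,K)$, where $Y \sim F_{RW}$ is the site to which the ancestor particle moves in the dispersal step and $K \sim F_{GW}$ is the number of offspring it then produces, $Y$ and $K$ being independent. Since dispersal precedes reproduction, all $K$ children occupy the site $Y$; by the branching property the subtrees they root are, conditionally on $(Y,K)$, independent branching random walks, each distributed as the original process translated by $Y$. The maximal displacement of a branching random walk started from a single site always attains that site --- it is recorded in the $M_0$ term of \eqref{eq:max-displacement} --- so the maximal displacement of the $j$-th subtree has the law of $Y + M^{(j)}$, the $M^{(j)}$ being i.i.d.\ copies of $M$ with $M^{(j)} \ge 0$; including the ancestor's own contribution gives $M = \max\bigl(0,\; Y + \max_{1 \le j \le K} M^{(j)}\bigr)$. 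Hence, since $x \ge 1 > 0$, on $\{Y = y\}$ the event $\{M \le x\}$ coincides with $\bigcap_{1 \le j \le K}\{M^{(j)} \le x - y\}$, which is the sure event when $K = 0$ --- precisely the situation of Remark~\ref{rmk}. Averaging over $(Y,K)$ yields $1 - u(x) = \sum_{y \in \Z} a_y\, E\bigl[(1-u(x-y))^{K}\bigr] = \sum_{y \in \Z} a_y \sum_{k \ge 0} p_k (1-u(x-y))^k$, which is \eqref{eq:convolution-naive}; note that the $k = 0$ summand contributes $p_0$ here --- the weight of the childless-ancestor case of Remark~\ref{rmk} --- and must not be dropped.

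For the second step I would observe that $E\bigl[(1-s)^{K}\bigr] = \sum_{k \ge 0} p_k (1-s)^k$ is the probability generating function of $F_{GW}$ evaluated at $1-s$, so by \eqref{eq:definition-Q} it equals $1 - Q(s)$. Taking $s = u(x-y)$ and using $\sum_{y \in \Z} a_y = 1$ turns the identity from the first step into $1 - u(x) = \sum_{y} a_y\bigl(1 - Q(u(x-y))\bigr) = 1 - \sum_{y} a_y Q(u(x-y))$, and cancelling the $1$'s gives \eqref{fund} for every $x \ge 1$. (The same computation in fact yields \eqref{fund} for all $x \ge 0$; it does not extend to $x < 0$, where $u(x) = 1$ but the right-hand side need not be.) I do not anticipate any real obstacle beyond the care already flagged: the substantive content lies in the first-step decomposition, and the one delicate point there is that the $k = 0$ term survives with coefficient $1$, reflecting the convention that a particle which disperses and then dies childless contributes nothing new to $M$.
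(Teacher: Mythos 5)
Your proof is correct and takes essentially the same approach as the paper: condition on the first dispersal-and-reproduction step to obtain the first-generation decomposition \eqref{eq:convolution-naive}, then rewrite it via the generating function $Q$. You are in fact slightly more careful than the paper's displayed intermediate identity, whose inner sum starts at $k=1$ and thus omits the $p_0$ term that, as you rightly insist, must be kept for consistency with \eqref{fund}.
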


\begin{remark}
\label{remark:alternate-steps} If the branching random walk used the
alternative rule discussed in Remark $\ref{rmk}$ (that is, particles
first reproduce and then disperse) to construct the branching random
walk, then equation \eqref{fund} would change as follows. Writing
$\tilde{M}$ for the maximal displacement of this branching random
walk, and $\tilde{u}(x) = P \{ \tilde{M} \geq x \}$, we would have
$$\tilde{u}(x) = Q \Bigl ( \sum_{k \in \Z} a_k \tilde{u}(x-k) \Bigr
).$$ Comparing this with equation $\eqref{fund}$, we see
that $$\tilde{u}(x) = Q \bigl ( u(x) \bigr ).$$ Since the Taylor
expansion of $Q$ is $Q(s) = s -{\sigma^2}s^2/{2} + O(s^3)$, it
follows that $\tilde{u} (x)$ and $u(x)$ go to $0$ as $x \rightarrow
\infty$ at the same rate.
\end{remark}

\subsection{A Discrete Feynman-Kac Formula}\label{sec:fk}

Our goal now is to analyze the asymptotic behavior of solutions to the
nonlinear convolution equation \eqref{fund} as $x \rightarrow
\infty$. To accomplish this, we will show that solutions of
\eqref{fund} can be represented by formulas of ``Feynman-Kac'' type.
Henceforth, we shall denote by $W_{n}$ the random walk on $\zz{Z}$
whose step distribution is the reflection of the step distribution
$F_{RW}$ in the underlying BRW , that is,
\begin{equation}\label{eq:rw-law}
	P (W_{n+1}-W_{n}=y\,|\, W_{n},W_{n-1},\dotsc)=a_{-y}.
\end{equation}
We shall use superscripts  $P^{x}$ and $E^{x}$ to denote the initial
point $W_{0}=x$ of the random walk $W_{n}$.

Define 
\begin{align}\label{eq:h-H}
	h(s) &= s - Q(s) = {\sigma^2 s^{2}/2}+ O(s^3) \quad \text{and}\\
\notag 	H(s) &= {h(s)/s} = {\sigma^2 s/2} + O(s^2).
\end{align}
It is easily checked that $H (s)$ is increasing for $s\in [0,1]$, and satisfies
$H (0)=0$ and $H (1)=p_{0}$.

\begin{prop} \label{ynmtg}
Under $P^{x}$, the process 
\begin{equation}\label{eq:fk-mg}
	 Y_n = \Bigl ( \prod_{j=1}^n \bigl (1-H(u(W_j)) \bigr ) \Bigr ) \cdot u(W_n)
\end{equation}
is a bounded martingale with respect to the natural filtration
generated by the random walk  $\{ W_n \}$. Here we adopt the convention
that the empty product $\prod_{j=1}^0$ is equal to $1$. 
\end{prop}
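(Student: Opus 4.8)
The plan is to verify the martingale identity $E^{x}[Y_{n+1}\mid\mathcal F_{n}]=Y_{n}$, where $\mathcal F_{n}=\sigma(W_{0},\dots ,W_{n})$, by a direct one-step computation. Since the factor $\prod_{j=1}^{n}(1-H(u(W_{j})))$ is $\mathcal F_{n}$-measurable, it pulls out of the conditional expectation, leaving $\prod_{j=1}^{n}(1-H(u(W_{j})))\cdot E^{x}[(1-H(u(W_{n+1})))\,u(W_{n+1})\mid\mathcal F_{n}]$. The key observation is the algebraic identity $(1-H(s))\,s=s-H(s)s=s-h(s)=s-(s-Q(s))=Q(s)$ for $s\in[0,1]$, which is immediate from the definitions $h(s)=s-Q(s)$ and $H(s)=h(s)/s$ in \eqref{eq:h-H}. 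Hence $(1-H(u(W_{n+1})))\,u(W_{n+1})=Q(u(W_{n+1}))$, and the whole problem reduces to showing $E^{x}[Q(u(W_{n+1}))\mid\mathcal F_{n}]=u(W_{n})$.

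For this I would invoke the Markov property of $W$ together with its step law \eqref{eq:rw-law}: conditionally on $\mathcal F_{n}$, $W_{n+1}=W_{n}+y$ with probability $a_{-y}$, so
\[
	E^{x}[Q(u(W_{n+1}))\mid\mathcal F_{n}]=\sum_{y\in\Z}a_{-y}\,Q(u(W_{n}+y))=\sum_{z\in\Z}a_{z}\,Q(u(W_{n}-z)),
\]
and by the nonlinear convolution equation \eqref{fund} the right-hand side is exactly $u(W_{n})$. Combined with the first step this gives $E^{x}[Y_{n+1}\mid\mathcal F_{n}]=Y_{n}$. Boundedness is then routine: $0\le u\le 1$ everywhere, and $H$ maps $[0,1]$ into $[0,p_{0}]\subseteq[0,1]$ (it is increasing with $H(0)=0$ and $H(1)=p_{0}$), so every factor $1-H(u(W_{j}))$ lies in $[0,1]$ and therefore $0\le Y_{n}\le u(W_{n})\le 1$. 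One should also note at the outset that the hypotheses on $F_{GW}$ (finite variance suffices here) make $Q(s)=1-\sum_{i}p_{i}(1-s)^{i}$ a well-defined continuous function on $[0,1]$ with $0\le Q\le 1$, so every expression above makes sense.

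The one point requiring care — and the main obstacle — is the behavior of $u$ near the boundary. Equation \eqref{fund} was derived by first-generation conditioning only for $x\ge 1$, whereas $u\equiv 1$ on $\{x<0\}$ (since $M\ge 0$ always), and the random walk $W$, being recurrent, will visit such sites. So the identity $\sum_{z}a_{z}Q(u(W_{n}-z))=u(W_{n})$ used above must be justified at every site the walk can occupy. For $W_{n}\ge 1$ this is precisely \eqref{fund}; for $W_{n}\le 0$ one checks the identity directly (it again holds at $W_{n}=0$), or, what is cleanest for the applications to follow, one runs the argument for the process stopped at the first exit of $W$ from $\{x\ge 1\}$, for which the displayed computation is valid verbatim. (In any case $\sum_{z}a_{z}Q(u(W_{n}-z))\le Q(1)\le 1=u(W_{n})$ when $W_{n}<0$, so no estimate is ever lost.) Everything else in the proof is bookkeeping.
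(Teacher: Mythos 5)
Your argument is the paper's own proof in only slightly different clothing: the paper also pulls the $\mathcal F_n$-measurable product out of the conditional expectation, uses $(1-H(s))s=s-h(s)=Q(s)$, and then invokes the convolution equation \eqref{fund} (in the equivalent form \eqref{fund2}) together with the step law \eqref{eq:rw-law} to recover $u(W_n)$, with boundedness noted exactly as you note it. Your closing observation is well taken and is a point the paper silently glosses over: \eqref{fund} is derived only for $x\geq 1$ and genuinely fails as an equality at negative sites (where the right-hand side is at most $1-p_0<1=u(x)$), so the martingale identity is really only valid while $W_n$ stays in the positive half-line; this is harmless because Corollary~\ref{corollary:fk} only ever uses the process stopped at $\tau_y$, which is precisely the fix you propose.
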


\begin{proof}
The random variables 
$Y_n $ are uniformly bounded, in particular, $0 \leq Y_n \leq 1$. To prove that the
sequence is a martingale we appeal to the nonlinear convolution
equation \eqref{fund}, which can be rewritten in terms of the function
$h$ as
\begin{equation} \label{fund2}
\left( \sum_{k \in \Z} a_k u(x-k)  \right) - u(x) = \sum_{k \in \Z} a_k h(u(x-k)).
\end{equation}
Using this, we compute
\begin{equation*}
\begin{aligned}
& E^x \Bigl ( Y_{n+1} \mid \{ W_j \} _{j=1}^n \Bigr )\\
= \; & \Bigl ( \prod_{j=1}^n \bigl (1-H(u(W_j)) \bigr ) \Bigr ) \cdot E^x \Bigl ( \bigl (1-H(u(W_{n+1})) \bigr ) \cdot u(W_{n+1}) \mid \{ W_j \} _{j=1}^n \Bigr )\\
= \; & \Bigl ( \prod_{j=1}^n \bigl (1-H(u(W_j)) \bigr ) \Bigr ) \cdot E^x \Bigl ( \bigl (u(W_{n+1}) - h(u(W_{n+1})) \bigr ) \mid \{ W_j \} _{j=1}^n \Bigr )\\
= \; & \Bigl ( \prod_{j=1}^n \bigl (1-H(u(W_j)) \bigr ) \Bigr ) \cdot \Bigl ( \sum_{k \in \Z} a_k u(W_n - k) - \sum_{k \in \Z} a_k h(u(W_n - k)) \Bigr )\\
= \; & \Bigl ( \prod_{j=1}^n \bigl (1-H(u(W_j)) \bigr ) \Bigr ) \cdot u(W_n)
= \;  Y_n,
\end{aligned}
\end{equation*}
where the second to last equality uses  equation $\eqref{fund2}$.
\end{proof}

\begin{cor}\label{corollary:fk}
For each $y\in \zz{Z}$, define 
\begin{equation}\label{eq:tau}
\tau_{y}=\min \{ n \geq 0 \mid W_n \leq y \}.
\end{equation}
Then for all $x,y\in \zz{Z}$,
\begin{equation} \label{mtg}
u(x) = E^x \left( \prod_{j=1}^{\tau_{y}} \bigl (1-H(u(W_j)) \bigr ) \right)u (W_{\tau_{y}}).
\end{equation} 
\end{cor}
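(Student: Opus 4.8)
The plan is to obtain \eqref{mtg} from Proposition~\ref{ynmtg} by an application of the optional stopping theorem. First I would dispose of the degenerate case $x\leq y$: here $\tau_{y}=0$, the empty product equals $1$, and $W_{\tau_{y}}=W_{0}=x$, so the asserted identity reduces to $u(x)=u(x)$. Henceforth assume $x>y$.

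The one genuinely nontrivial input is that $\tau_{y}<\infty$ almost surely under $P^{x}$. The walk $\{W_{n}\}$ has i.i.d.\ increments whose common law is the reflection of $F_{RW}$; in particular these increments have mean $0$ and finite positive variance $\eta^{2}$, and their support is not contained in $[0,\infty)$. By the Chung--Fuchs theorem such a one-dimensional random walk is recurrent, and a standard consequence of recurrence together with non-degeneracy is that $\liminf_{n\to\infty}W_{n}=-\infty$ almost surely. Hence the walk a.s.\ eventually enters $(-\infty,y]$, so $\tau_{y}<\infty$ a.s.

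With this in hand the conclusion is routine. By Proposition~\ref{ynmtg}, $\{Y_{n}\}$ is a martingale with $0\leq Y_{n}\leq 1$, and $Y_{0}=u(W_{0})=u(x)$ (the empty product being $1$). For each fixed $n$ the stopping time $n\wedge\tau_{y}$ is bounded, so optional stopping gives $E^{x}Y_{n\wedge\tau_{y}}=Y_{0}=u(x)$. Since $\tau_{y}<\infty$ a.s., $Y_{n\wedge\tau_{y}}\to Y_{\tau_{y}}$ a.s.\ as $n\to\infty$, and the family $\{Y_{n\wedge\tau_{y}}\}_{n}$ is uniformly bounded by $1$; the bounded convergence theorem therefore yields $E^{x}Y_{\tau_{y}}=u(x)$. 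By the definition \eqref{eq:fk-mg} of $Y_{n}$ one has
$$
Y_{\tau_{y}}=\Bigl(\prod_{j=1}^{\tau_{y}}\bigl(1-H(u(W_{j}))\bigr)\Bigr)\,u(W_{\tau_{y}}),
$$
and taking $E^{x}$ of both sides gives exactly \eqref{mtg}.

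I do not expect any real difficulty here. The only place where the hypotheses of Theorem~\ref{mainresultgeneral} are used essentially is the almost-sure finiteness of $\tau_{y}$, which rests solely on the mean-zero assumption on $F_{RW}$ (via recurrence of the driftless walk $W_{n}$). If one preferred to avoid invoking Chung--Fuchs, the boundedness $0\leq Y_{n}\leq 1$ alone already shows that $\lim_{n}E^{x}Y_{n\wedge\tau_{y}}$ exists and equals $u(x)$, and since $h\geq 0$ forces $Y_{n}\to 0$ on $\{\tau_{y}=\infty\}$ one would recover \eqref{mtg} up to the term $E^{x}\!\left[Y_{\tau_{y}};\tau_{y}=\infty\right]=0$; but recurrence makes the clean argument above available directly.
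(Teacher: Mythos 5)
Your proof is correct and follows essentially the same route as the paper: recurrence of the driftless walk gives $\tau_{y}<\infty$ a.s., and optional stopping applied to the bounded martingale $Y_{n}$ of Proposition~\ref{ynmtg} yields \eqref{mtg}. The extra detail you supply (bounded convergence along $n\wedge\tau_{y}$) is exactly what the paper's appeal to ``Doob's optional sampling identity for bounded martingales'' compresses.
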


\begin{proof}
Since the random walk $W_{n}$ is driftless it must be recurrent, and
hence $\tau_{y}$ is finite.  Since the martingale $Y_{n}$ of
Proposition~\ref{ynmtg} is bounded, Doob's optional sampling identity
applies, yielding \eqref{mtg}. Note that if $y\leq 0$ then $u
(W_{\tau_{y}})=1$, since $W_{\tau_{y}}\leq 0$.
\end{proof}

\subsection{Scaling Limits}\label{sec:scaling}

Using the Feynman-Kac representation \eqref{mtg} we will show that the
function $u$, properly re-normalized, converges to a function that
satisfies the Fleischman-Sawyer equation \eqref{eq:fleischman-sawyer}.
Because we do not know \emph{a priori} that the function $u$ has a
proper scaling limit we must work with subsequential limits. 
Since $u$ is monotone and satisfies $0<u\leq 1$ it follows that for
any $y\geq 0$ there exist sequences $x_{k} \rightarrow \infty$ such
that
\begin{equation}\label{eq:subseq-lims}
	\phi (y):= \lim_{ k \rightarrow \infty} \frac{u
	(x_{k}+{y}/{\sqrt{u(x_{k})}}  )}{u(x_{k})}  
\end{equation}
exists.  Clearly, any such limit must satisfy $0\leq \phi (y)\leq 1$, and
if $y=0$ the limit is $\phi (0)=1$. By Cantor's diagonalization
argument, any such sequence $x_{k}$ must have a subsequence, which we
also denote by $x_{k}$, such that the convergence
\eqref{eq:subseq-lims} holds for all \emph{rational} $y\geq 0$.

\begin{prop} \label{phicont} For any sequence $x_{k}\rightarrow
\infty$ such that \eqref{eq:subseq-lims} holds for all \emph{rational}
$y\geq 0$, the limit function $\phi (y)$ extends to a continuous,
non-increasing, positive function of $y\in [0,\infty )$. Hence, the
convergence \eqref{eq:subseq-lims} holds {uniformly} for $y$ in any
compact interval $[0,A]$.
\end{prop}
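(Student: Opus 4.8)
The plan is to extract from the Feynman--Kac identity \eqref{mtg} a functional inequality for $\phi$ of ``Brownian Feynman--Kac'' type, and then to read off continuity and positivity. Fix rationals $0\le y_1<y_2$ and write $\epsilon_k=\sqrt{u(x_k)}$, $\xi_k=x_k+y_2/\epsilon_k$, $\xi_k'=x_k+y_1/\epsilon_k$. Applying Corollary~\ref{corollary:fk} with starting point $\xi_k$ and barrier level $\xi_k'$ --- the martingale property in Proposition~\ref{ynmtg} does not require $W_0\in\Z$, so the identity remains valid for non-integer starting points --- and writing $\tau=\tau_{\xi_k'}$ gives
\[
u(\xi_k)=E^{\xi_k}\Bigl[\Bigl(\prod_{j=1}^{\tau}\bigl(1-H(u(W_j))\bigr)\Bigr)u(W_\tau)\Bigr].
\]
For $1\le j<\tau$ we have $W_j>\xi_k'$, hence $u(W_j)\le u(\xi_k')$ and, since $H$ is increasing, $1-H(u(W_j))\ge 1-H(u(\xi_k'))$; also $u(W_\tau)\ge u(\xi_k')$ because $W_\tau\le\xi_k'$. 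Keeping only the last factor of the product together with these bounds,
\[
\frac{u(\xi_k)}{u(x_k)}\;\ge\;\frac{u(\xi_k')}{u(x_k)}\;E^{\xi_k}\Bigl[\bigl(1-H(u(W_\tau))\bigr)\bigl(1-H(u(\xi_k'))\bigr)^{\tau-1}\Bigr].
\]

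Now I let $k\to\infty$. By Donsker's invariance principle the rescaled walk $t\mapsto \sqrt{u(x_k)}\,(W_{\lfloor t/u(x_k)\rfloor}-x_k)$ converges in law to $y_2+\eta B_t$, a Brownian motion with variance parameter $\eta^2$ started at $y_2$; hence $u(x_k)\,\tau$ converges in law to the (a.s.\ finite) hitting time $T_{y_1,y_2}:=\inf\{t\ge 0:y_2+\eta B_t= y_1\}$. The moment hypotheses on $F_{RW}$ make the overshoot $\xi_k'-W_\tau$ tight, so (using $x_k\to\infty$) $u(W_\tau)\to 0$ and therefore $1-H(u(W_\tau))\to 1$ in probability. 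Since $u(\xi_k')/u(x_k)\to\phi(y_1)$ and $H(s)=\tfrac{\sigma^2}{2}s+O(s^2)$ by \eqref{eq:h-H}, a short computation shows $\bigl(1-H(u(\xi_k'))\bigr)^{\tau-1}$ converges in law to $\exp\bigl(-\tfrac{\sigma^2}{2}\phi(y_1)T_{y_1,y_2}\bigr)$. All quantities involved lie in $[0,1]$, so bounded convergence yields the key inequality
\[
\phi(y_2)\;\ge\;\phi(y_1)\;E\Bigl[\exp\bigl(-\tfrac{\sigma^2}{2}\,\phi(y_1)\,T_{y_1,y_2}\bigr)\Bigr]
\qquad\text{for all rational }0\le y_1<y_2 .
\]

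From here the argument is soft. Taking $y_1=0$, using $\phi(0)=1$ and the explicit Laplace transform $E[\exp(-\lambda T_{0,y})]=\exp(-y\sqrt{2\lambda}/\eta)$, gives $\phi(y)\ge \exp(-\sigma y/\eta)$ for rational $y$, hence (by monotonicity) for all $y\ge 0$; this is positivity. For continuity, note that $\phi$ is non-increasing on $\mathbb{Q}\cap[0,\infty)$ (immediately from monotonicity of $u$), so it has a right-continuous non-increasing extension $\Phi(y):=\sup\{\phi(q):q\in\mathbb{Q},\,q>y\}$ and a left-continuous non-increasing extension $\Psi(y):=\inf\{\phi(q):q\in\mathbb{Q},\,q<y\}$, with $\Psi\ge\Phi$. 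Applying the key inequality to rationals $q_1<q_2$ bracketing a point $a>0$, and bounding $\phi(q_1)\le 1$ in the exponent, gives $\phi(q_2)\ge E[\exp(-\tfrac{\sigma^2}{2}T_{q_1,q_2})]\,\phi(q_1)$; letting $q_2\downarrow a$ and then $q_1\uparrow a$ forces $T_{q_1,q_2}\to 0$ in probability, whence $\Phi(a)\ge\Psi(a)$. Thus $\Phi=\Psi$, which is therefore continuous and agrees with $\phi$ at every rational point, so it is the desired extension. Finally, the functions $y\mapsto u(x_k+y/\sqrt{u(x_k)})/u(x_k)$ are non-increasing and converge on the dense set $\mathbb{Q}\cap[0,\infty)$ to a continuous non-increasing limit, so a standard P\'olya-type argument upgrades this to uniform convergence on every compact interval $[0,A]$; in particular \eqref{eq:subseq-lims} then holds for every $y\ge 0$.

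The heart of the matter is the passage to the limit in the middle step: turning the discrete Feynman--Kac identity into its Brownian analogue. This needs (i) the invariance principle for $W$ and continuity of the level-hitting-time functional; (ii) tightness of the overshoot $\xi_k'-W_\tau$, which is where the moment hypotheses on $F_{RW}$ enter; and (iii) uniform control of the error in $\log\bigl(1-H(u(\xi_k'))\bigr)=-\tfrac{\sigma^2}{2}u(\xi_k')+O(u(\xi_k')^2)$ so that $(1-H(u(\xi_k')))^{\tau-1}$ genuinely converges to $\exp(-\tfrac{\sigma^2}{2}\phi(y_1)T_{y_1,y_2})$. None of these is deep, but they must be assembled with care; everything afterward --- positivity, continuity, and the uniform convergence --- is then routine.
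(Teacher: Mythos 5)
Your proposal is correct and follows essentially the same route as the paper: both stop the discrete Feynman--Kac martingale of Proposition~\ref{ynmtg} at the level $x_k+y_1/\sqrt{u(x_k)}$ starting from $x_k+y_2/\sqrt{u(x_k)}$, lower-bound the multiplicative functional, and use the invariance principle to conclude that $\phi(y_2)/\phi(y_1)\to 1$ as $y_2-y_1\to 0$. Your version is, if anything, slightly more careful than the paper's (explicit treatment of the overshoot factor $1-H(u(W_\tau))$ and of positivity via the hitting-time Laplace transform), but the underlying mechanism is identical.
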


\begin{proof}
Fix $0\leq y_1<y_2$, both rational, and for ease of notation write
$z_i={y_i}/{\sqrt{u(x)}}$ for $i=1,2$ and $x=x_{k}$ (the dependence on
$k$ will be suppressed). Fix a sequence
$x_{k}\rightarrow \infty$ along which \eqref{eq:subseq-lims} holds for
all rational $y$. To avoid a proliferation of subscripts, we shall
write $\lim_{x \rightarrow \infty}$ to mean convergence along the
subsequence $x_{k}$.  By Proposition \ref{ynmtg} and Doob's optional
sampling theorem,
\begin{equation*}
\begin{aligned}
\phi (y_{2})
& = \lim_{x \rightarrow \infty} \frac{u (x+z_{2})}{u(x)}\\
& = \lim_{x \rightarrow \infty} E^{x+z_2}\left(
\frac{u(W_{\tau(x+z_{1})})}{u(x)} \prod_{j=1}^{\tau ( x+z_1)}  
(1-H(u(W_j))  )\right),
\end{aligned}
\end{equation*}
where $\tau_{z} = \tau (z)=\min \{ j \geq 0 \mid W_j \leq z \}$. Using
the expansion $H (u)\sim \sigma^{2}u/2$ as $u \rightarrow 0$ we obtain
\begin{equation*}
\begin{aligned}
	\phi (y_{2})=& \lim_{x \rightarrow \infty} E^{x+z_2-z_1}
	\left ( \prod_{j=1}^{\tau (x)} \left (1-H(u(W_j+z_1)) \right )
	\frac{u(W_{\tau (x)}+z_1)}{u(x)} \right ) \\
	 = \; & \lim_{x \rightarrow \infty} E^{x+z_2-z_1} \left (
	 \exp \left \{  -\frac{\sigma^2}{2} \sum_{j=1}^{\tau (x)}
	 u(W_j+z_1) \right \} \frac{u(W_{\tau (x)}+z_1)}{u(x)} \right ) \\
	  \geq \; & \lim_{x \rightarrow \infty} E^{x+z_2-z_1} \left
	  ( \exp \left \{ -\frac{\sigma^2}{2} \tau (x) u(x) \right \}
	  \frac{u(x+z_1)}{u(x)} \right ). 
\end{aligned}
\end{equation*}
The last inequality holds because $W_j > x$ for all $j < \tau_x$ and
$W_{\tau_x} \leq x$. 

By the invariance principle (here we use the
assumption that the step distribution of the random walk $W_{n}$ has
mean $0$ and finite variance), as $x \rightarrow \infty$,
\begin{equation*}
\begin{aligned}
\mathcal{D} (\tau_x u(x) \mid W_0 = x+z_{2}-z_{1}) & =
\mathcal{D} (\tau_0 u(x) \mid W_0 = z_{2}-z_{1}) \\
& \Longrightarrow  \mathcal{D} (\tau_0^{BM} \mid B_0 = y_2-y_1)
\end{aligned}
\end{equation*}
where $B_t$ is a standard Brownian motion started at $y_{2}-y_{1}$ and $\tau_0^{BM}$ is the
first hitting time of $0$ by $B_t$. (Here $\mathcal{D}$ denotes 
conditional distribution.) Hence, for any $\epsilon>0$, when
$y_2-y_1$ is sufficiently small and $x$ is sufficiently large, $\exp
 \{-\sigma^{2}\tau_{x}u (x)/2 \} \geq 1-\epsilon$ with
probability $1-\epsilon$. Consequently,
\begin{equation*}
\begin{aligned}
 \lim_{x \rightarrow \infty} &E^{x+z_2-z_1}\left ( \exp \bigl ( -\frac{\sigma^2}{2} \tau_x u(x) \bigr ) \frac{u(x+z_1)}{u(x)} \right )\\ 
\geq \; & (1-\epsilon)^2 \lim_{x \rightarrow \infty} \frac{u(x+z_1)}{u(x)}\\
= \; & (1-\epsilon)^2 \phi(y_1).
\end{aligned}
\end{equation*}
This proves that $\forall \epsilon>0$, if $z_2-z_1$ is sufficiently
close to zero then $\phi(y_1) \geq \phi(y_2) \geq (1-\epsilon)^2
\phi(y_1)$. Therefore, $\phi$ is continuous.
\end{proof}

Our aim now is to show that there is only one possible subsequential
limit function $\phi$, and that it satisfies the Fleischman-Sawyer
differential equation. To accomplish this we will use the discrete
Feynman-Kac formula \eqref{mtg} and the invariance principle to 
show that any subsequential limit $\phi$  satisfies the following
Feynman-Kac formula. 

\begin{prop} \label{phieq}
Assume that the step distribution $\{a_k\}_{k \in \Z}$ of the random
walk $\{W_n\}$ has finite $r$-th moment for some $r>4$. Then any
subsequential limit  $\phi(y)$ specified by \eqref{eq:subseq-lims}
satisfies 
\begin{equation} \label{phieqeq}
\phi(y) = E^{y/\eta} \exp \bigl ( -\frac{\sigma^2}{2}
\int_0^{\tau_0^{BM}} \phi(\eta B_t) \,\mathrm{d}t \bigr )  \quad
\text{for} \; y\geq 0, 
\end{equation}
where under $P^{y\eta}$ the process $B_{t}$ is a standard Brownian
motion started at $B_{0}=y/\eta$ and $\tau_0^{BM}$ is the first
hitting time of $0$ by $B_t$.
\end{prop}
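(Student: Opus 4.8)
The plan is to pass to the scaling limit in the discrete Feynman-Kac
identity \eqref{mtg}, using Proposition~\ref{phicont} to control the
integrand. Fix a sequence $x_{k}\to\infty$ along which
\eqref{eq:subseq-lims} holds for all rational $y\geq 0$; by
Proposition~\ref{phicont} the convergence is in fact uniform on
compacta, and $\phi$ is continuous, positive, and non-increasing.
Write $x=x_{k}$ and $z=z(k)=y/\sqrt{u(x)}$, and apply \eqref{mtg} with
starting point $x+z$ and barrier level $x$:
\[
u(x+z) = E^{x+z}\left( \prod_{j=1}^{\tau_x}\bigl(1-H(u(W_j))\bigr)
\right) u(W_{\tau_x}).
\]
Dividing by $u(x)$, the left side converges to $\phi(y)$. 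For the
right side I would first replace $u(W_{\tau_x})/u(x)$ by something
tractable: since $W_j>x$ for $j<\tau_x$ and $W_{\tau_x}\leq x$, and
$u$ is monotone, $u(W_{\tau_x})\leq u(x)$ but the overshoot must be
controlled, which is exactly where the $r>4$ moment hypothesis enters
(see below). Next, using the Taylor expansion
$H(u)=\sigma^2 u/2+O(u^2)$ from \eqref{eq:h-H} and the fact that along
the path $u(W_j)\leq u(x)\to 0$, the product
$\prod_{j=1}^{\tau_x}(1-H(u(W_j)))$ is asymptotic to
$\exp\{-\tfrac{\sigma^2}{2}\sum_{j=1}^{\tau_x} u(W_j)\}$.

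The heart of the argument is a Riemann-sum/invariance-principle
analysis of $\sum_{j=1}^{\tau_x} u(W_j)$. Rescale space by writing
$W_j = x + \sqrt{u(x)}\,V_j$, so that $V_0 = y/\sqrt{u(x)}\cdot
\sqrt{u(x)}$—more precisely start the walk at $x+z$ so $V_0=y$—and
$u(W_j)/u(x) = u(x+\sqrt{u(x)}V_j)/u(x)\approx \phi(V_j)$ by
\eqref{eq:subseq-lims}, uniformly while $V_j$ stays in a compact set.
Then
\[
\frac{\sigma^2}{2}\sum_{j=1}^{\tau_x} u(W_j)
= \frac{\sigma^2}{2}\,u(x)\sum_{j=1}^{\tau_x}\frac{u(W_j)}{u(x)}
\approx \frac{\sigma^2}{2}\,u(x)\sum_{j=1}^{\tau_x}\phi(V_j).
\]
By Donsker's invariance principle, under $P^{x+z}$ the rescaled
process $t\mapsto \sqrt{u(x)}\,(W_{\lfloor t/u(x)\rfloor}-x)$
converges in law (with the usual time scaling by the variance, which
produces the factor $\eta$) to $\eta B_t$ started at $y$, equivalently
$B_t$ started at $y/\eta$ after absorbing $\eta$; and $u(x)\tau_x$
converges to the corresponding hitting time of $0$, as already noted
in the proof of Proposition~\ref{phicont}. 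Since $u(x)\sum_{j\leq
\tau_x}\phi(V_j)$ is a Riemann sum with mesh $u(x)$ for
$\int_0^{\tau_0^{BM}}\phi(\eta B_t)\,dt$, and $\phi$ is continuous and
bounded, this converges in law to that integral. Feeding this back,
the right side of the rescaled \eqref{mtg} converges to
$E^{y/\eta}\exp\{-\tfrac{\sigma^2}{2}\int_0^{\tau_0^{BM}}\phi(\eta
B_t)\,dt\}$, which is \eqref{phieqeq}. One should use a Skorokhod
coupling to upgrade the weak convergence of
$(u(x)\tau_x,\text{path})$ to almost-sure convergence of the
functionals, together with dominated convergence (everything is
bounded by $1$) to pass the limit through the expectation.

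The main obstacle, and the only place where the hypothesis $r>4$ is
genuinely needed rather than just mean-zero finite-variance, is
controlling three nearby error terms simultaneously, all of which
involve the random walk straying far before time $\tau_x$: (i) the
overshoot $u(W_{\tau_x})/u(x)$, which one wants to be $\to 1$—this
requires that $W_{\tau_x}$ is not much below $x$, i.e. control of the
undershoot of a mean-zero walk, and more importantly that the
\emph{maximum} excursion height $\max_{j\leq\tau_x}(W_j-x)$, rescaled
by $\sqrt{u(x)}$, has tails uniform in $x$ so that the Riemann-sum
approximation $u(W_j)/u(x)\approx\phi(V_j)$ is valid up to a
negligible event; (ii) the tail of $u(x)\tau_x$, to justify that the
contribution of the event $\{u(x)\tau_x>T\}$ to both the discrete sum
and the limiting integral is uniformly small as $T\to\infty$; (iii)
the $O(u^2)$ error in replacing the product by the exponential,
i.e. $\sum_{j\leq\tau_x}u(W_j)^2 = O(u(x))\to 0$, which again needs
$u(x)\tau_x$ (equivalently $\tau_x u(x)$) to have enough integrability
and $u$ to decay. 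A polynomial moment bound of order $r>4$ on the step
distribution gives, via standard maximal and renewal estimates for
$\tau_x$ and $\max_{j<\tau_x}W_j$, exactly the uniform integrability
needed to make all three errors vanish; with only $4-\varepsilon$
moments the heuristic at the end of the introduction shows the
rescaled maximum excursion can have a heavy enough tail to break (i).
I would therefore organize the proof as: (1) reduce \eqref{mtg} to the
exponential form and the Riemann sum via the Taylor expansion of $H$;
(2) invoke Donsker plus Skorokhod coupling for
$(W_{\lfloor\cdot/u(x)\rfloor},u(x)\tau_x)$; (3) prove the three
uniform-integrability lemmas using the $r>4$ moment assumption; (4)
assemble via bounded/dominated convergence to obtain \eqref{phieqeq}.
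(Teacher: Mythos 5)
Your proposal is correct and follows essentially the same route as the paper: rescale the discrete Feynman--Kac identity \eqref{mtg}, Taylor-expand $H$ to pass from the product to the exponential of a Riemann sum, invoke the invariance principle for the rescaled walk and $u(x)\tau_x$, and isolate the overshoot factor $u(W_{\tau_x})/u(x)$ as the place where the $r>4$ moment hypothesis enters (the paper handles this in Lemma~\ref{lemma:overshoot} via the $(r-2)$-moment bound on the limiting overshoot distribution). The only minor difference is that the paper needs $r>4$ solely for the overshoot, while the $O(u^2)$ error is controlled by the third moment of the offspring distribution together with $u(x)\tau_x=O_P(1)$, which requires no extra moments of the step distribution.
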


The need for a finite $4+\varepsilon$ moment stems from the fact that
in general the random walk $W_{n}$ will overshoot $0$ at the first
passage time $\tau (0)$.  The renewal theorem (cf. \cite{feller} or
\cite{spitzer}) implies that as the initial point $W_{0}=x \rightarrow
\infty$, the distribution of the overshoot $W_{\tau (0)}$ converges
weakly provided the step distribution of the random walk has mean zero
and finite variance. The number of finite moments of the limiting
overshoot distribution is determined by the number of moments of the
step distribution, as follows.

\begin{lem} \label{moments}
 If the step distribution of 
$\{W_n\}$ has finite $r$-th moment, then the limiting overshoot
distribution  has finite $(r-2)$-th
moment. 
\end{lem}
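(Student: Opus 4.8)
The plan is to derive the moment bound for the limiting overshoot distribution from the classical renewal-theoretic formula expressing it in terms of the descending ladder structure of the random walk $\{W_n\}$. Since $\{W_n\}$ is a mean-zero, finite-variance random walk, it oscillates, and the first strict descending ladder epoch $\sigma^- = \min\{n \geq 1 \mid W_n < 0\}$ is finite a.s.; write $\xi = -W_{\sigma^-} \geq 0$ for the associated ladder height. By the Pollaczek--Khinchine / renewal representation of the overshoot (see \cite{feller}, XI.3 or \cite{spitzer}), as $x \rightarrow \infty$ the overshoot $W_{\tau(0)}$ below level $0$ (equivalently the overshoot above a level tending to $+\infty$ for $-W_n$) converges weakly to the stationary-excess (size-biased) distribution of the ladder height $\xi$, namely the distribution with tail $P\{\xi > t\}/E\xi$ on $[0,\infty)$. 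Consequently the $m$-th moment of the limiting overshoot is $E[\xi^{m+1}] / ((m+1)\,E\xi)$, so the statement reduces to the claim: if the step distribution has finite $r$-th moment, then the descending ladder height $\xi$ has finite $(r-1)$-th moment. Taking $m = r-2$ then gives a finite $(r-2)$-th moment for the overshoot, as required (and in fact a bit more when $r$ is not an integer).

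The core step is therefore the ladder-height moment bound. The standard tool here is Spitzer's identity / the Bax-Spitzer combinatorial formula, or more directly the result (Chow, or Gut, \emph{Stopped Random Walks}) that for a mean-zero random walk the ladder height $\xi$ satisfies $E[\xi^{p}] < \infty$ whenever $E[(X^-)^{p+1}] < \infty$ (with $X$ a generic step). I would phrase it via the identity $E[\xi^{p}] < \infty \iff E[(\sup_{n} (S_n - S_{\sigma^-_{\text{previous}}}))\cdots]$, but the cleanest route is: for mean-zero finite-variance walks, $E[\xi^2] < \infty$ always (this is classical), and more generally one has the moment transfer losing exactly one power. I would cite this as a known renewal-theoretic fact — it is exactly the discrete analogue of the well-known statement that the overshoot of a random walk over a fixed high level has one fewer moment than the steps. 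So the proof is short: invoke the weak-convergence representation of the overshoot as the size-biased ladder-height law, invoke the ladder-height moment transfer theorem with $p = r-1$, and conclude $E[(\text{overshoot})^{r-2}] = E[\xi^{r-1}]/((r-1)E\xi) < \infty$.

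The main obstacle — and the only place requiring care — is making sure the quoted moment-transfer statement for ladder heights is applied with the correct exponents and under exactly the hypotheses available (mean zero, finite variance, finite $r$-th moment with $r > 4$, though for this lemma only finiteness of the $r$-th moment is used). The subtlety is that the ``one fewer moment'' heuristic is sharp: it genuinely costs one power to pass from the step distribution to the ladder height, and then one \emph{gains} a power back when forming the size-biased (stationary-excess) law, so the overshoot ends up with $r - 1$ powers of integrability from the size-biasing but is controlled by $E[\xi^{r-1}]$, which needs the $r$-th moment of the steps — hence the net loss of $2$. I would therefore state the ladder-height lemma precisely as a cited fact, check the bookkeeping of exponents once, and note that because the random walk here is driftless one does not need to worry about the ascending-versus-descending distinction beyond symmetry of the argument. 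If a self-contained proof of the ladder-height bound is wanted rather than a citation, I would sketch it via the Wiener--Hopf factorization: $E z^{\sigma^-} e^{i\theta W_{\sigma^-}}$ is expressed through $\exp\big(\sum_n n^{-1} E[z^n e^{i\theta S_n}; S_n < 0]\big)$, and differentiating in $\theta$ the required number of times and using the finite-$r$-th-moment bound on each $E[|S_n|^{r}; S_n<0] = O(n^{r/2})$ together with a fluctuation estimate $P\{S_n < 0\} \to 1/2$ yields convergence of the relevant derivative series; but for a paper of this type a citation to \cite{feller} or to Gut's monograph is the appropriate level of detail.
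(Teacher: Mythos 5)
Your proposal is correct and follows essentially the same route as the paper: both reduce the overshoot to the descending ladder height via the moment-transfer fact that the ladder height has one fewer moment than the step (the paper cites Spitzer, exercise 6, p.~232), and both then lose one more moment in passing to the integrated-tail (stationary-excess) law of the ladder height, the paper doing this through the explicit bound $P(S\leq a)\leq \sum_{k\geq 1}P(Z_{1}\leq a-k)$ rather than by quoting the renewal-theoretic identification of the limit as the size-biased ladder-height distribution. The exponent bookkeeping in your argument matches the paper's exactly.
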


\begin{proof}
Consider the ladder variables
\begin{align*}
	T_1 &= \min \{ n > 0 \mid W_n <W_{0} \}, \hspace{0.5cm}
	&Z_1=W_{T_1}-W_{0},\\
	 T_2 &= \min \{ n > T_1 \mid W_n <
	W_{T_{1}} \}, \hspace{0.5cm} &Z_2=W_{T_2} -W_{T_{1}},  
\end{align*}
and so
on. Clearly, the first passage time $\tau (0)$ must be one of the
ladder times $T_{i}$. Moreover, the ladder steps $Z_{m+1}-Z_{m}$ are
i.i.d., and by exercise 6, p. 232 of \cite{spitzer}, the random
variable $Z_{1}$ has finite absolute $r$th moment if the step
distribution $F_{RW}$ has finite absolute $(r+1)$th moment.  The key
observation is that for any 
$a \leq 0$ and any $x\geq 1$,
\begin{align*}
	P^{x}(W_{\tau (0)} \leq a)&= \sum_{k=1}^{x} G (x;k) P^{k}\{Z_{1}\leq  a-k\}\\
	&\leq \sum_{k=1}^{x}  P^{k}\{Z_{1}\leq  a-k\}\\
	&=\sum_{k=1}^{x}  P^{0}\{Z_{1}\leq  a-k\}
\end{align*}
where $G (x;k)$ is the probability under $P^{x}$ that the random walk
$W_{n}$ will visit the site $k$ at one of the ladder times
$T_{i}$. Thus, if $S$ has the limiting
overshoot distribution, then for any $a\leq 0$,
$$P(S \leq a) \leq \sum_{k=1}^{\infty} P(Z_1 \leq a-k).$$ 
This inequality together with the earlier observation about moments of
the ladder variable $Z_{1}$ implies that if $E^{0}|W_{1}|^{r}<\infty$
then $S$ has at least $r-2$ moments. 
\end{proof}

\begin{lem}\label{lemma:overshoot}
If the step distribution $F_{RW}$ satisfies the hypotheses of
Theorem~\ref{mainresultgeneral} then along any sequence
$x=x_{k}\rightarrow \infty$ such that \eqref{eq:subseq-lims} holds
uniformly on compact sets,
\begin{equation} \label{wts}
\lim_{k \rightarrow \infty} E^{y/\sqrt{u(x_{k})}}\left (
\frac{u(W_{\tau_0}+x_{k})}{u(x_{k})}\right ) = 1. 
\end{equation}
\end{lem}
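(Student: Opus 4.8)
The plan is to use the fact, established in Proposition~\ref{phicont}, that $u(x_k + z)/u(x_k) \to \phi(z\sqrt{u(x_k)}^{\,-1}$-scale$)$ uniformly on compact sets, together with control on the overshoot $W_{\tau_0} - 0$ supplied by Lemma~\ref{moments}. Write $x = x_k$ and set $S_x = -W_{\tau_0}$ (the overshoot below $0$) under $P^{y/\sqrt{u(x)}}$. Then $u(W_{\tau_0} + x)/u(x) = u(x - S_x)/u(x)$, and since $S_x \geq 0$ and $u$ is non-increasing, the ratio is $\geq 1$; so the content of \eqref{wts} is really the matching upper bound, i.e. that $S_x$ is $o(1/\sqrt{u(x)})$ in a strong enough sense that $u(x - S_x)/u(x)$ cannot exceed $1$ asymptotically. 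The heuristic is that $S_x$ converges in law (renewal theorem) to a proper random variable $S$ with, by Lemma~\ref{moments}, finitely many ($r-2 > 2$) moments, whereas $1/\sqrt{u(x)} \to \infty$; so $S_x \sqrt{u(x)} \to 0$ in probability, and $u(x - S_x)/u(x) \to \phi(0^+) = 1$ on that event.

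The steps, in order: (i) Fix $\epsilon > 0$ and $A > 0$. On the event $\{S_x \leq A/\sqrt{u(x)}\}$ we have, by monotonicity, $1 \leq u(x - S_x)/u(x) \leq u(x - A/\sqrt{u(x)})/u(x)$, and by Proposition~\ref{phicont} the right side converges to $\phi(-A)$... but $\phi$ is only defined on $[0,\infty)$, so instead I would run the argument the other way: bound $u(x-S_x)/u(x)$ from above by splitting on $\{S_x \leq \epsilon/\sqrt{u(x)}\}$ and its complement. (ii) On $\{S_x \leq \epsilon/\sqrt{u(x)}\}$: here $x - S_x \geq x - \epsilon/\sqrt{u(x)}$, and I need a \emph{lower} bound on $u$ near $x$, i.e. an upper bound on $u(x-t)/u(x)$ for small $t>0$; this follows because $\phi$ extends continuously past $0$ in the sense that $u$ cannot grow too fast to the left — concretely, apply the Feynman-Kac identity \eqref{mtg} with the roles reversed, starting the walk at $x$ and stopping at $x - \epsilon/\sqrt{u(x)}$, which gives $u(x - \epsilon/\sqrt{u(x)}) = E(\prod(1-H(u(W_j))) u(W_{\tau}))$ with $W_\tau \geq x - \epsilon/\sqrt{u(x)}$ minus an overshoot; iterating/bounding shows $\limsup u(x-\epsilon/\sqrt{u(x)})/u(x)$ is bounded by a quantity tending to $1$ as $\epsilon \downarrow 0$. (iii) On $\{S_x > \epsilon/\sqrt{u(x)}\}$: bound $u(x - S_x)/u(x) \leq 1/u(x) \leq C \cdot (\text{something})$ crudely, but this is too lossy — instead observe $u(x - S_x) \leq 1$ always, so the contribution to $E(u(W_{\tau_0}+x)/u(x))$ from this event is $\leq P(S_x > \epsilon/\sqrt{u(x)})/u(x)$. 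Now use $P(S_x > t) \leq C t^{-(r-2)}$ uniformly in $x$ (from Lemma~\ref{moments}, since the pre-limit overshoots have uniformly bounded $(r-2)$-moments — one should check this uniformity, but it follows from the same ladder-variable domination used in that lemma's proof), so this term is $\leq C (\epsilon/\sqrt{u(x)})^{-(r-2)} / u(x) = C \epsilon^{-(r-2)} u(x)^{(r-2)/2 - 1}$, which $\to 0$ as $x \to \infty$ precisely because $r - 2 > 2$, i.e. $(r-2)/2 - 1 > 0$. (iv) Combine: $\limsup_k E^{y/\sqrt{u(x_k)}}(u(W_{\tau_0}+x_k)/u(x_k)) \leq \limsup_k u(x_k - \epsilon/\sqrt{u(x_k)})/u(x_k) + 0$, then let $\epsilon \downarrow 0$; together with the trivial lower bound $\geq 1$ this gives \eqref{wts}.

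I expect the main obstacle to be step (ii) — the \emph{left-continuity-type} control, i.e. showing $\limsup_{x} u(x - \epsilon/\sqrt{u(x)})/u(x) \to 1$ as $\epsilon \downarrow 0$. The subsequential-limit function $\phi$ is only constructed on $[0,\infty)$ and is monotone non-increasing there with $\phi(0)=1$; what is needed is that $u$ does not \emph{decrease} appreciably over a left-neighborhood of $x_k$ of width $\epsilon/\sqrt{u(x_k)}$, equivalently that a one-sided analogue of Proposition~\ref{phicont} holds at $y = 0^-$. I would obtain this by re-running the proof of Proposition~\ref{phicont}: apply \eqref{mtg} starting the walk at $x$ and letting it run to the level $x - \epsilon/\sqrt{u(x)}$; the product $\prod_{j=1}^{\tau}(1-H(u(W_j))) \leq 1$ gives $u(x) \leq E^x(u(W_\tau))$ where now $W_\tau \leq x - \epsilon/\sqrt{u(x)}$ but with an overshoot of the same $O(1)$ order as before, so $W_\tau \geq x - \epsilon/\sqrt{u(x)} - S'_x$; monotonicity of $u$ and the uniform convergence \eqref{eq:subseq-lims} (extended to this level by the same overshoot estimate as in steps (iii)) then pin $\limsup u(x - \epsilon/\sqrt{u(x)})/u(x) \leq \phi$ evaluated at a point tending to $0$, hence $\to 1$. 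The other place to be careful is the uniformity in $x$ of the overshoot tail bound $P(S_x > t) \leq Ct^{-(r-2)}$; this is not quite the statement of Lemma~\ref{moments} (which is about the limiting distribution), but the ladder-variable domination $P^x(W_{\tau(0)} \leq a) \leq \sum_{k\geq 1} P^0(Z_1 \leq a - k)$ in that proof is already uniform in $x$, so the same bound gives uniform moment control on the pre-limit overshoots, which is what is actually used here.
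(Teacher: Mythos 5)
Your proposal follows essentially the same route as the paper: split on whether the overshoot exceeds a cutoff of order $o(1/\sqrt{u(x)})$, kill the bad event by combining the crude bound $u(W_{\tau_0}+x)/u(x)\le 1/u(x)$ with a Markov/Chebyshev estimate based on the $(r-2)$ finite moments of the overshoot (so that $r>4$ makes the resulting exponent of $u(x)$ positive), and on the good event bound the integrand by $u(x-\text{cutoff})/u(x)\to 1$. The two points you single out as delicate --- uniformity in $x$ of the overshoot tail bound, and the left-sided control $\limsup_{x} u(x-\epsilon/\sqrt{u(x)})/u(x)\to 1$ --- are precisely the points the paper treats tersely (it takes the cutoff $u(x)^{-1/2+\alpha}$ instead of $\epsilon/\sqrt{u(x)}$ and asserts the left-sided limit directly from Proposition~\ref{phicont}), so your write-up is consistent with, and if anything more explicit than, the published argument.
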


\begin{proof}
As in the proof of Proposition~\ref{phicont} we will omit the
subscript $k$ on $x_{k}$ and write $\lim_{x \rightarrow \infty}$ to
mean convergence along the subsequence $x_{k}$. We also write $z=y/\sqrt{u (x)}$.
By Proposition \ref{phicont},  
\[
	\lim_{y \rightarrow 0} \lim_{x \rightarrow \infty} \frac{u
	 (x+z )}{u(x)} = 1,
\]
which implies that for any $\alpha>0$,
\begin{equation*}
\lim_{x \rightarrow \infty} \frac{u  (x+u (x)^{-\frac{1}{2}+\alpha}) }{u(x)} = 1.
\end{equation*}
By the monotonicity
of $u$,
\begin{align*}
	1 & \leq E^{z} \left ( \frac{u(W_{\tau_0}+x)}{u(x)} \right )\\
	  & = 	E^{z} \left ( \frac{u(W_{\tau_0}+x)}{u(x)} \right
	  )\mathbf{1}_{A} +E^{z} \left ( \frac{u(W_{\tau_0}+x)}{u(x)}
	  \right )\mathbf{1}_{A^{c}} \\
	  &=I+II,
\end{align*}
where
\begin{equation*}
	A= A (x)=\{W_{\tau (0)}\geq -u (x)^{-1/2+\alpha } \}.
\end{equation*}
By Chebyshev's inequality, for any $r>2$,
\[
	P^{x} (A^{c})\leq E|W_{\tau (0)}|^{r-2}u (x)^{(r-2)
	(\frac{1}{2} -\alpha)},
\] 
and by Lemma~\ref{moments}, if the step
distribution $F_{RW}$ has finite $r$th moment then $E|W_{\tau
(0)}|^{r-2}<\infty$. Since $u (u(W_{\tau_0}+x))/u (x)\leq u (x)^{-1}$,
it follows that 
\[
	II\leq C (u(x))^{(\frac{1}{2}-\alpha)(r-2)-1}.
\]

By hypothesis the step distribution $F_{RW}$ has finite $r$th
moment for some $r>4$, so the constant $\alpha >0$ can be chosen so that
the exponent in the last displayed inequality is
positive. Consequently, quantity $II$ converges to $0$ as $x \rightarrow
\infty$. On the other hand, $\lim_{x \rightarrow \infty}P^{x} (A)=1$,
and on the event $A$ the integrand in quantity $I$ is bounded by
\[
	\frac{u (x-u (x)^{-1/2+\alpha})}{u (x)} \longrightarrow 1,
\]
and so quantity $I$ converges to $1$ as $x \rightarrow \infty$.
\end{proof}

\begin{proof}[Proof of Proposition  \ref{phieq}]
Once again write $z=y/\sqrt{u (x)}$.
According to Corollary~\ref{corollary:fk}  and the Optional Stopping Theorem,
\begin{equation}\label{eq:chain}
\begin{aligned}
& \frac{u \bigl (x+{y}/{\sqrt{u(x)}} \bigr )}{u(x)}\\
= \; & E^{x+z}  \prod_{j=1}^{\tau_{x}} \bigl (1-H(u(W_j)) \bigr ) \frac{u(W_{\tau_x})}{u(x)} \\
= \; & E^{z}  \prod_{j=1}^{\tau_{0}} \bigl (1-H(u(W_j+x)) \bigr ) \frac{u(W_{\tau_0}+x)}{u(x)} \\
= \; & E^{z}  \exp \bigl \{ \sum_{j=1}^{\tau_{0}} \log (1-H(u(W_j+x)) \bigr \} \frac{u(W_{\tau_0}+x)}{u(x)} \\
= \; & E^{z}  \exp \bigl \{ \sum_{j=1}^{\tau_{0}} \bigl ( -\frac{\sigma^2}{2} u(W_j+x) + O(u(x)^2) \bigr ) \bigr \} \frac{u(W_{\tau_0}+x)}{u(x)} \\
= \; & E^{z}  \exp \bigl \{ -\frac{\sigma^2}{2} \sum_{j=1}^{\tau_{0}}  u(W_j+x) + \tau_0 O(u(x)^2) \bigr \} \frac{u(W_{\tau_0}+x)}{u(x)} \\
\end{aligned}
\end{equation}
The error term $O(u(x)^2)$ is bounded in magnitude by $Cu (x)^{2}$ for
some finite constant $C$ not depending on $x$, by virtue of our
standing hypothesis that the offspring distribution $F_{GW}$ has
finite third  moment,  which ensures that $H (u)$ has finite third
derivative at $u=0$. 

The invariance principle implies that as $x \rightarrow \infty$ the
distribution of the process $\sqrt{u(x)} W_{t/u(x)}/\eta $ under
$P^{z}$ converges weakly to that of a standard Brownian motion $B_{t}$
started at $B_{0}=y/\eta$. Consequently, the distribution of the renormalized first
passage time  $u (x)\tau_{0}$ converges weakly to that of
$\tau^{BM}_{0}$, and hence the error term $\tau_{0}O (u (x)^{2})$
converges in distribution  to $0$. Moreover,  along any sequence
$x=x_{k}\rightarrow \infty$ such that the convergence
\eqref{eq:subseq-lims} holds uniformly for $y$ in compact intervals,
\[
	\sum_{j=1}^{\tau_{0}}u (W_{j}+x)=u (x)\sum_{j=1}^{\tau_{0}}u (W_{j}+x)/u (x)
			        \stackrel{\mathcal{D}}{\longrightarrow}
			       \int_{0}^{\tau^{BM}_{0}} \phi (\eta B_{t})\,dt.
\]
Therefore, by Lemma~\ref{lemma:overshoot},  
\begin{multline*}
	\lim_{k \rightarrow \infty} E^{z}  \exp \bigl \{
	-\frac{\sigma^2}{2} \sum_{j=1}^{\tau_{0}}  u(W_j+x_{k}) + \tau_0
	O(u(x_{k})^2) \bigr \} \frac{u(W_{\tau_0}+x_{k})}{u(x_{k})}\\
	= E^{y/\eta}  \exp\left\{ -\frac{\sigma^2}{2}\int_0^{\tau_0^{BM}}
	\phi(\eta B_t) \,\mathrm{d}t \right\}.   
\end{multline*}
This together with the convergence \eqref{eq:subseq-lims} and the
chain of equalities \eqref{eq:chain} proves that $\phi$ must satisfy
the Feynman-Kac formula \eqref{phieqeq}.
\end{proof}

\begin{cor}\label{corollary:fk:de}
Under the hypotheses of Proposition~\ref{phieq}, 
\begin{equation}\label{eq:theLim}
	\lim_{x \rightarrow \infty} \frac{u (x+y/\sqrt{u (x)})}{u (x)}
	= \left(\frac{\sigma y}{\sqrt{6}\eta}+1 \right)^{-2}:=\phi (y)
\end{equation}
uniformly for $y\geq 0$.
\end{cor}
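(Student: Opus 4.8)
The plan is to show that the Feynman--Kac equation \eqref{phieqeq} determines $\phi$ uniquely, and then to exploit this to remove the passage to subsequences in \eqref{eq:subseq-lims} and to strengthen the convergence so that it holds uniformly on all of $[0,\infty)$. The first step is to turn \eqref{phieqeq} into an ordinary differential equation. By Proposition~\ref{phicont} any subsequential limit $\phi$ is continuous and bounded on $[0,\infty)$, so $V(z):=\sigma^{2}\phi(\eta z)/2$ is a bounded, continuous potential; applying the Feynman--Kac (Dynkin) theorem to the function $z\mapsto E^{z}\exp\bigl(-\int_{0}^{\tau_{0}^{BM}}V(B_{t})\,dt\bigr)$, which by \eqref{phieqeq} is precisely $\phi(\eta z)$, shows that $\phi(\eta z)$ is $C^{2}$ on $(0,\infty)$ and solves $\frac{1}{2}w''=Vw$ with boundary value $w(0)=\phi(0)=1$. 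Written in the variable $y=\eta z$ this is
\begin{equation*}
\eta^{2}\phi''(y)=\sigma^{2}\phi(y)^{2}\ \text{ on }(0,\infty),\qquad \phi(0)=1,
\end{equation*}
which is, after the obvious change of variables, precisely the Fleischman--Sawyer equation \eqref{eq:fleischman-sawyer}. I expect this step --- producing a genuine classical $C^{2}$ solution of the ODE, valid up to the boundary point $y=0$, out of the probabilistic identity \eqref{phieqeq} --- to be the main technical point, as it requires checking the hypotheses of the Feynman--Kac theorem and the behavior of $\phi$ near $0$.

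Next I would integrate this ODE using only that $\phi$ is positive and bounded. Since $\phi''\ge 0$, the function $\phi$ is convex on $(0,\infty)$; a bounded convex function cannot be increasing, so $\phi$ is non-increasing and has a limit $\ell\ge 0$ at $+\infty$, and $\ell>0$ is impossible, for then $\phi''\ge\sigma^{2}\ell^{2}/\eta^{2}>0$ everywhere, contradicting boundedness. Hence $\phi\downarrow 0$, and by convexity $\phi'\uparrow 0$ as well. Multiplying the ODE by $\phi'$ and integrating from $y$ to $\infty$ yields the first integral $(\phi')^{2}=\frac{2\sigma^{2}}{3\eta^{2}}\phi^{3}$; taking the negative square root, separating variables, and using $\phi(0)=1$ to evaluate the constant of integration gives $\phi(y)^{-1/2}=1+\sigma y/(\sqrt{6}\,\eta)$, which is exactly the function claimed in \eqref{eq:theLim}. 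In particular every subsequential limit occurring in \eqref{eq:subseq-lims} is this one function.

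Finally I would remove the subsequence and obtain uniformity. Set $\phi_{x}(y):=u(x+y/\sqrt{u(x)})/u(x)$; for each fixed $x$ this is non-increasing in $y$, equals $1$ at $y=0$, and tends to $0$ as $y\to\infty$, so $1-\phi_{x}$ is a bona fide distribution function on $[0,\infty)$. If $\phi_{x}(y_{0})$ failed to converge to $\phi(y_{0})$ for some rational $y_{0}$, one could choose $x_{k}\to\infty$ along which $\phi_{x_{k}}(y_{0})$ stays bounded away from $\phi(y_{0})$; by the diagonalization argument preceding Proposition~\ref{phicont}, a further subsequence would have $\phi_{x_{k}}$ converging at every rational to some limit function, which by Propositions~\ref{phicont} and~\ref{phieq} and the two preceding paragraphs would have to equal $\phi$, a contradiction. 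Therefore $\phi_{x}\to\phi$ pointwise on the rationals as $x\to\infty$, and since the limit $\phi$ is continuous and decreases from $1$ to $0$, P\'olya's theorem promotes this to convergence of $1-\phi_{x}$ to $1-\phi$ uniformly on all of $[0,\infty)$, which is the assertion of \eqref{eq:theLim}.
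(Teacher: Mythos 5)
Your proposal is correct and follows essentially the same route as the paper: use Kac's theorem to convert the Feynman--Kac identity \eqref{phieqeq} into the ODE $\eta^{2}\phi''=\sigma^{2}\phi^{2}$ with $\phi(0)=1$, identify the unique bounded positive solution as $(1+\sigma y/(\sqrt{6}\eta))^{-2}$, and then use uniqueness of subsequential limits together with monotonicity to upgrade to uniform convergence. Your explicit integration of the nonlinear ODE (convexity, first integral, separation of variables) and the P\'olya-theorem step merely fill in details the paper leaves implicit.
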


\begin{proof}
Since $u$ and $\varphi$ are monotone and bounded,
it suffices to prove that there is only one possible subsequential
limit function \eqref{eq:subseq-lims}, and that this limit is the
solution of the differential equation 
\begin{equation}\label{eq:fk-eqn}
	\phi '' (y)= {\sigma^{2}\phi (y)^{2}}/{\eta^{2}}
\end{equation}
that satisfies $\phi (0)=1$ and $\lim_{y \rightarrow \infty}\phi
(y)=0$.  But this follows from the Feynman-Kac representation
\eqref{phieqeq} of subsequential limits and Kac's theorem (cf., for
instance, \cite{ito-mckean}, sec. ~2.6), which implies that for
any positive, bounded, continuous function $V:[0,\infty ) \rightarrow
\zz{R}$ the function
\[
	\psi (y)=E^{y/\eta }\exp \left\{-\frac{\sigma^{2}}{2}\int_{0}^{\tau^{BM}_{0}}V (\eta
	B_{t})\,dt \right\} 
\]
is the unique bounded solution of the differential equation $\psi
''=\sigma^{2}V\psi /\eta^{2}$ satisfying $\psi (0)=1$.
\end{proof}

\subsection{Proof of Theorem~\ref{mainresultgeneral}}

To complete the proof of Theorem~\ref{mainresultgeneral} we must show
that 
\begin{equation}\label{eq:Objective}
	\lim_{x \rightarrow \infty}w (x) =1/\beta^{2}:=6\eta^{2}/\sigma^{2}	
\quad \text{where} \quad 	w (x):=x^{2}u (x).
\end{equation}
We will deduce \eqref{eq:Objective} from the asymptotic scaling law
\eqref{eq:theLim}, which in terms of the function $w$ may be rewritten
as
\begin{equation}\label{eq:w-fund}
	\lim_{x \rightarrow \infty}\frac{(1+\beta
	y)^{2}}{(1+y/\sqrt{w (x)})^{2}}\cdot 
	\frac{w (x (1+y/\sqrt{w (x)}))}{w (x)}=1.
\end{equation}
This  relation holds uniformly for $y\in [0,A]$, for any $A<\infty$, by
Corollary~\ref{corollary:fk:de}, since \eqref{eq:w-fund} is obtained
from  \eqref{eq:theLim} by multiplying both sides of \eqref{eq:theLim}
by  $(1+\beta y)^{2}$.

The function $w$ is not continuous, since $u$ has jump discontinuities
at the positive integers. However, it is ``asymptotically continuous''
in the sense that 
\[
	\lim_{x \rightarrow \infty} \sup_{0\leq y\leq 1} \left|\frac{w
	(x+y)}{w (x)} -1\right|=0.
\]
This follows directly from the corresponding assertion for the
function $u$, which in turn follows from the monotonicity of $u$ and
Corollary~\ref{corollary:fk:de}. Consequently, $w$ obeys the following
weak form of the intermediate value theorem.

\medskip 
\begin{lem}\label{lemma:noSkips}
Suppose there exist constants $0\leq A < B\leq \infty$ and  positive
integers $x_{1}<z_{1}<x_{2}<z_{2}<\dotsb $ such that $\lim w(x_{n})=A$
and $\lim w (z_{n})=B$. Then for every $C\in [A,B]$ there exist
integers $y_{n}\in [x_{n},z_{n}]$ and $y_{n}'\in [z_{n},x_{n+1}]$ such
that 
\[
	\lim w (y_{n})= \lim w (y_{n}')=C.
\]
\end{lem}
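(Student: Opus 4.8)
The plan is to run a discrete first-passage argument through the integers of each block, using the ``asymptotic continuity'' of $w$ just established (which, in multiplicative form, says that $w(m)/w(m-1)\to 1$ as the integer $m\to\infty$) as a substitute for the intermediate value theorem.

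First I would dispose of the two endpoint values. If $C=A$, take $y_n=x_n\in[x_n,z_n]$ and $y_n'=x_{n+1}\in[z_n,x_{n+1}]$; along both sequences $w$ tends to $A=C$. If $C=B$, take $y_n=y_n'=z_n$. Hence we may assume $A<C<B$, and note that this forces $0<C<\infty$, since $C>A\ge 0$ and the case $C=B=\infty$ has been handled.

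Now consider the block $[x_n,z_n]$. Because $w(x_n)\to A<C$ and $w(z_n)\to B>C$, for all large $n$ we have $w(x_n)<C\le w(z_n)$, so we may let $y_n$ be the least integer in $(x_n,z_n]$ with $w(y_n)\ge C$; minimality forces $w(y_n-1)<C$. Since $y_n\ge x_n\to\infty$, applying the asymptotic-continuity bound at the point $y_n-1$ with increment $1$ gives $r_n:=w(y_n)/w(y_n-1)\to 1$ (here $w(y_n-1)=(y_n-1)^2u(y_n-1)>0$ because $u>0$). Then
\[
	C\le w(y_n)=r_n\,w(y_n-1)<r_n\,C ,
\]
and letting $n\to\infty$ squeezes $w(y_n)\to C$. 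The construction of $y_n'\in[z_n,x_{n+1}]$ is the mirror image: for large $n$, $w(z_n)>C>w(x_{n+1})$, so let $y_n'$ be the least integer in $(z_n,x_{n+1}]$ with $w(y_n')\le C$; then $w(y_n'-1)>C\ge w(y_n')$, and with $r_n':=w(y_n')/w(y_n'-1)\to 1$ we get $r_n'\,C<w(y_n')\le C$, hence $w(y_n')\to C$.

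The only step that genuinely uses earlier work --- and hence the main obstacle --- is the claim that the discrete increments of $w$ cannot overshoot the target level $C$ in the limit, i.e.\ that $w(y_n)/w(y_n-1)\to 1$. This is exactly the asymptotic continuity of $w$, which was deduced above from the monotonicity of $u$ together with Corollary~\ref{corollary:fk:de}; its multiplicative formulation is what keeps the estimate valid even when $B=\infty$, where $w$ is unbounded. Everything else is bookkeeping about which integer to stop at.
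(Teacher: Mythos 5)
Your proof is correct and takes exactly the route the paper intends: the paper states the lemma as an immediate consequence of the asymptotic continuity $\sup_{0\leq y\leq 1}|w(x+y)/w(x)-1|\to 0$ and omits the details, and your discrete first-passage argument (with the multiplicative squeeze $C\leq w(y_n)<r_nC$) is precisely the bookkeeping being left to the reader. The endpoint cases and the positivity of $w$ are handled correctly, so nothing is missing.
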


\begin{proof}
[Proof of Theorem~\ref{mainresultgeneral}] Denote by $0\leq A\leq
B\leq \infty$ the liminf and limsup of $w (x)$ as $x \rightarrow
\infty$. We must show that $A=B=1/\beta^{2}$. 

First we show that $A<\infty$  and $B>0$. Assume to the contrary that
$A=\infty$; then $w (x) \rightarrow \infty$ as $x \rightarrow \infty$,
and so there must be a sequence $x_{k} \rightarrow \infty$ such
that $w$ exits the interval $(0, w (x_{k}))$  at
$x_{k}$, that is, such that $w (x)\geq w (x_{k})$ for all
$x>x_{k}$.  The asymptotic scaling
relation \eqref{eq:w-fund}, with $x=x_{k}$, implies that for any
$y>0$,  
\[
	\lim_{k \rightarrow \infty}\frac{(1+\beta
	y)^{2}}{(1+y/w (x_{k}))^{2}}\cdot 
	\frac{w (x_{k} (1+y/\sqrt{w (x_{k})}))}{w (x_{k})}=1.
\]
Since $w (x_{k})\rightarrow \infty$, the first ratio converges to
$(1+\beta y)^{2}>1$; but the second ratio  is also at least $1$, by the
choice of $x_{k}$, so we have a contradiction. Therefore,
$A<\infty$. A similar argument shows that $B>0$.

Suppose next that $\infty >A>1/\beta^{2}$. Since $A=\liminf w (x)$ there
exists a sequence $x_{k}\rightarrow \infty$ such that
$w(x_{k})\rightarrow A$.  For any $y>0$ the  scaling
relation \eqref{eq:w-fund}, with $x=x_{k}$, implies
\[
	\lim_{k \rightarrow \infty}\frac{(1+\beta
	y)^{2}}{(1+y/\sqrt{A})^{2}}\cdot 
	\frac{w (x_{k} (1+y/\sqrt{A}))}{A}=1.
\]
But since $y>0$, it follows that
\[
	\lim_{k \rightarrow \infty} w (x_{k} (1+y/\sqrt{A}))) = A
	\frac{(1+y/\sqrt{A})^{2}}{(1+\beta y)^{2}}<A,
\]
contradicting the supposition that $A=\liminf w (x)$. Thus, $A\leq
1/\beta^{2}$. A similar argument proves that $B\geq
1/\beta^{2}$. Thus, $\beta^{-2}\in [A,B]$.

It remains to prove that $A=B$. If not, then it must be the case that
$\beta^{-2}<B$ or $\beta^{-2}>A$. Suppose that 
$A<\beta^{-2}$, and let $A^{*}\in (A,\beta^{-2})$; then the weak
intermediate value property (Lemma~\ref{lemma:noSkips}) implies that
there exist sequences $z_{n}\rightarrow \infty$
 and $x_{n}\rightarrow \infty$ such that 
\begin{align*}
	\lim_{n \rightarrow \infty} w (z_{n})&=A^{*};\\
	\lim_{n \rightarrow \infty} w (x_{n})&=A; \quad \text{and}\\
	w (x) \leq w (& z_{n}) \quad \text{for all}\; x\in [z_{n},x_{n}].
\end{align*}
Relation \eqref{eq:w-fund}, this time with $x=z_{n}$,
implies that uniformly for $y\in [0,C]$, for any $C<\infty$,
\[
	\lim_{n \rightarrow \infty} \frac{(1+\beta
	y)^{2}}{(1+y/\sqrt{A^{*}})^{2}}\cdot 
	\frac{w (z_{n} (1+y/\sqrt{A^{*}}))}{w (z_{n})}=1.
\]
But  since $A^{*}<\beta^{-2}$, 
\[
	\frac{(1+\beta	y)^{2}}{(1+y/\sqrt{A^{*}})^{2}}\leq 1
\]
for all
$y\geq 0$, with strict inequality except at $y=0$, and so 
\[
	\liminf \frac{w (z_{n} (1+y/\sqrt{A^{*}}))}{w (z_{n})}\geq 1
\]
uniformly on any interval $y\in [0,C]$, with strict inequality on any
sub-interval $y\in [\varepsilon ,C]$. This contradicts the hypothesis
that $w (x)\leq w (z_{n})$ for all $x\in [z_{n},x_{n}]$. Therefore,
$A=\beta^{-2}$. A similar argument shows that $B=\beta^{-2}$.

\end{proof}

\section{Conditional Limit Theorem}\label{sec:time-dependent}

\subsection{Space-time Feynman-Kac formula}\label{ssec:td-fk} Assume
throughout this section that $M_{n}$ is the rightmost particle
location in the $n$th generation of a branching random walk satisfying
the hypotheses of Theorem~\ref{mainresultgeneral}. (On the event that
there are no particles in the $n$th generation, set $M_{n}=-\infty$.)  The
\emph{unconditional} distribution function of the random variable
$M_{n}$ satisfies a time-dependent nonlinear convolution equation
similar to the time-independent equation \eqref{fund} satisfied by the
distribution of the maximal displacement random variable
$M$. Specifically, if
\begin{equation}\label{eq:mn-cdf}
	 v_{n} (x)=v (n,x):=P\{M_{n} > x\},
\end{equation}
then for every $n\geq 1$,
\begin{equation}\label{eq:nonlinear}
	v_{n} (x) =\sum_{y\in \zz{Z}} a_{k}Q (v_{n-1} (x-y)),
\end{equation}
where $1-Q (1-s)$ is the probability generating function of the
offspring distribution (see equation \eqref{eq:definition-Q}). The
objective of this section is to analyze the asymptotic behavior of
$v$, and in particular to show that $nv (n,[x\sqrt{n}])$ converges as
$n \rightarrow \infty$, for any $x\in \zz{R}$, to a 
distribution function that depends only on the variances of the
offspring and step distributions of the branching random walk.
The strategy will once again be to represent the solution of
\eqref{eq:nonlinear} by a discrete Feynman-Kac formula, and then to
show that after an appropriate rescaling  the Feynman-Kac expectations
converge to the corresponding  Feynman-Kac expectations for Brownian motion. As in
sec.~\ref{sec:main-result}, denote by $W_{n}$ a random walk with step
distribution \eqref{eq:rw-law}, and  by $P^{x}$ the law of the
random walk with initial point $W_{0}=x$. Then essentially the same arguments
as in the time-independent case prove the following assertion.

\begin{prop}\label{proposition:td-fk}
For each $n$ the process 
\begin{equation}\label{eq:space-time-mg}
	Z^{(n)}_{k}= v_{n-k} (W_{k})\prod_{j=1}^{k-1} (1-H (v_{n-j},
	\quad \text{for}\; k=0,1,2,\dotsc ,n 
	(W_{j}))) 
\end{equation}
is a martingale under $P^{x}$. Consequently, for any $n\leq m$,
\begin{equation}\label{eq:space-time-fk}
	v_{m} (x)=E^{x}v_{n} (W_{m-n})\prod_{j=1}^{m-n-1} (1-H (v_{m-j}
	(W_{j}))). 
\end{equation}
\end{prop}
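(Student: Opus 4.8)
The plan is to follow, essentially verbatim, the route used for Proposition~\ref{ynmtg} and Corollary~\ref{corollary:fk}, the only new feature being that the generation label carried by the distribution function must be decremented at each step of the walk. First I would rewrite the time-dependent convolution equation \eqref{eq:nonlinear} in the form parallel to \eqref{fund2}: since $Q(s)=s-h(s)$ and $h(s)=sH(s)$,
\begin{equation*}
  v_n(x)=\sum_{y\in\Z}a_y\,Q\bigl(v_{n-1}(x-y)\bigr)=\sum_{y\in\Z}a_y\,\bigl(1-H(v_{n-1}(x-y))\bigr)\,v_{n-1}(x-y).
\end{equation*}
Then I would observe that $Z^{(n)}_k$ takes values in $[0,1]$: each factor $v_{n-j}(W_j)$ lies in $[0,1]$, and $1-H(s)\in[0,1]$ for $s\in[0,1]$ because $H$ is increasing with $H(0)=0$ and $H(1)=p_0\le 1$. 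Thus $Z^{(n)}_\bullet$ is a bounded process.

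For the martingale identity I would condition on $W_1,\dots,W_k$. Writing the product in $Z^{(n)}_{k+1}$ so that it telescopes onto the product in $Z^{(n)}_k$, every factor of $Z^{(n)}_{k+1}$ except $\bigl(1-H(v_{n-k-1}(W_{k+1}))\bigr)\,v_{n-k-1}(W_{k+1})$ is measurable with respect to $\sigma(W_1,\dots,W_k)$ and hence pulls out of the conditional expectation; and by the elementary identity $\bigl(1-H(s)\bigr)s=s-h(s)=Q(s)$ the remaining term is $Q\bigl(v_{n-k-1}(W_{k+1})\bigr)$. Using the step law \eqref{eq:rw-law}, $E^x\bigl(Q(v_{n-k-1}(W_{k+1}))\mid W_1,\dots,W_k\bigr)=\sum_{y\in\Z}a_y\,Q\bigl(v_{n-k-1}(W_k-y)\bigr)$, which equals $v_{n-k}(W_k)$ by the rewritten convolution equation above (applied with generation label $n-k$ and spatial argument $W_k$). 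Reassembling the factors that were pulled out gives $E^x\bigl(Z^{(n)}_{k+1}\mid W_1,\dots,W_k\bigr)=Z^{(n)}_k$.

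Formula \eqref{eq:space-time-fk} then follows by evaluating the bounded martingale $k\mapsto Z^{(m)}_k$ at the \emph{deterministic} index $k=m-n$: the empty product equals $1$, so $Z^{(m)}_0=v_m(W_0)=v_m(x)$, and $E^xZ^{(m)}_{m-n}=E^xZ^{(m)}_0$ is exactly the asserted identity. Note that, in contrast to Corollary~\ref{corollary:fk}, here one needs neither recurrence of $W$ nor finiteness of a passage time, since the stopping index is fixed; boundedness of $Z^{(m)}$ is used only to move the expectation through the telescoping computation. I do not anticipate a substantive obstacle: the one point requiring care is the index bookkeeping in the product, i.e.\ making sure the factor attached to $W_j$ carries the label $n-j$ so that it matches the instance of \eqref{eq:nonlinear} being invoked (and, correspondingly, choosing the upper limits of the products in \eqref{eq:space-time-mg}--\eqref{eq:space-time-fk} consistently with this).
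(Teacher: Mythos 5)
Your proof is correct and is precisely the argument the paper intends (the paper itself gives no details beyond ``essentially the same arguments as in the time-independent case prove the following assertion''). You are also right to flag the index bookkeeping: as printed, the products in \eqref{eq:space-time-mg} and \eqref{eq:space-time-fk} stop at $k-1$ and $m-n-1$, whereas the telescoping you carry out (and consistency with Proposition~\ref{ynmtg}, e.g.\ checking $E^xZ^{(n)}_1=v_n(x)$) requires them to run to $k$ and $m-n$, so the displayed formulas contain an off-by-one typo that your version of the computation correctly repairs.
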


\subsection{Monotonicity, tightness, and scaling limits}\label{ssec:monotonicity}

For each $n\geq 0$ the function $v_{n} (x)$ is non-increasing in $x$,
with limit $0$ as $x \rightarrow \infty$ and limit $v_{n}
(-\infty)=P\{\zeta \geq n \}$ as $x \rightarrow -\infty$, where
$\zeta$ is the extinction time of the branching random walk. By
Kolmogorov's theorem on the lifetime of a critical Galton-Watson
process (see \cite{athreya-ney}, ch.~1),  as $n \rightarrow \infty$,
\begin{equation}\label{eq:kolmogorov}
	v_{n} (-\infty)=P\{\zeta \geq n \}\sim \frac{2}{\sigma^{2}n}. 
\end{equation}
Hence, the functions $nv_{n} (x)$ are uniformly bounded.

\begin{lem}\label{lemma:tightness}
Under the hypotheses of Theorem~\ref{mainresultgeneral},
the family of rescaled distribution functions $ nv_{n} (x\sqrt{n})$ is
tight, that is, 
\begin{equation}\label{eq:tightness}
	\lim_{ x \rightarrow \infty} \sup_{n\geq 1} nP\{M_{n} \geq
	x\sqrt{n}\}=0 \quad \text{and} \quad 
	\lim_{ x \rightarrow -\infty} \sup_{n\geq 1} nP\{-\infty <M_{n} \leq
	x\sqrt{n}\}=0.
\end{equation}
\end{lem}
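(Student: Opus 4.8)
\textbf{Proof proposal for Lemma~\ref{lemma:tightness}.}

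The plan is to establish the two tail estimates in \eqref{eq:tightness} separately, using in each case the space-time Feynman-Kac representation \eqref{eq:space-time-fk} together with the already-proven asymptotics for the time-independent problem (Theorem~\ref{mainresultgeneral}) and for the extinction probability \eqref{eq:kolmogorov}. For the right tail, the key observation is the elementary monotone comparison $v_{n}(x)=P\{M_{n}>x\}\le P\{M>x\}=u(x)$: the event that the $n$th-generation maximum exceeds $x$ is contained in the event that the global maximum exceeds $x$. Hence $n v_{n}(x\sqrt{n})\le n\,u(x\sqrt{n})$, and by \eqref{eq:main-result} the right-hand side is asymptotic to $n\cdot 6\eta^{2}/(\sigma^{2}x^{2}n)=6\eta^{2}/(\sigma^{2}x^{2})$, uniformly in $n$ up to a bounded multiplicative error once $x$ is large. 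More carefully, since $u(y)\le C/y^{2}$ for all $y\ge 1$ (a consequence of Theorem~\ref{mainresultgeneral}, or more simply of the fact that $w(x)=x^{2}u(x)$ is bounded), we get $\sup_{n\ge 1} n v_{n}(x\sqrt{n})\le \sup_{n\ge 1} n\,C/(x^{2}n)=C/x^{2}\to 0$ as $x\to\infty$. This disposes of the first limit in \eqref{eq:tightness} with essentially no work beyond invoking the boundedness of $w$.

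The left tail is the substantive part. Here I would use the Feynman-Kac formula \eqref{eq:space-time-fk}, or rather the decomposition it encodes, to write, for a target point $x\sqrt{n}$ with $x<0$ large in absolute value,
\[
	n P\{-\infty<M_{n}\le x\sqrt{n}\}
	= n\bigl(v_{n}(x\sqrt{n}) - v_{n}(-\infty)\bigr)
	= n\,v_{n}(x\sqrt{n}) - n\,v_{n}(-\infty).
\]
By \eqref{eq:kolmogorov} the second term converges to $2/\sigma^{2}$, so it suffices to show $\sup_{n}\bigl(n v_{n}(x\sqrt{n})-n v_{n}(-\infty)\bigr)\to 0$ as $x\to-\infty$, i.e. that the probability of surviving $n$ generations \emph{and} having the $n$th-generation maximum below $x\sqrt{n}$ is $o(1/n)$ uniformly in $n$. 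The heuristic is the Dawson-Watanabe/super-Brownian picture: conditioned on survival to generation $n$, the cloud of order $n^{2}$ particles occupies a region of spatial scale $\sqrt{n}$, so the maximum being far to the \emph{left} of $0$ on the $\sqrt{n}$ scale forces the whole conditioned cloud to have drifted an atypically large distance, an event of probability decaying in $|x|$. To make this rigorous I would apply \eqref{eq:space-time-fk} with a suitable intermediate time, say $m=n$ and $n$ replaced by $\lfloor n/2\rfloor$: this expresses $v_{n}(x\sqrt{n})$ as an expectation over a random walk path $W_{0},\dots,W_{\lceil n/2\rceil}$ of $v_{\lfloor n/2\rfloor}(W_{\lceil n/2\rceil})$ weighted by the Feynman-Kac product $\prod(1-H(v_{n-j}(W_{j})))\le 1$. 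Using the invariance principle (scaling $W$ by $\sqrt{n}$) and the already-established scaling limit $n v_{n}(y\sqrt{n})\to$ (some bounded limit, or at least uniform boundedness) of $n v_n$ from the right-tail bound plus monotonicity, one bounds $n v_{n}(x\sqrt{n})$ by an expression that is, up to $o(1)$, the probability that a Brownian path started near $x/\eta<0$ stays below $0$ while its terminal (rescaled) value $y$ contributes weight controlled by $\limsup n v_{\lfloor n/2\rfloor}(y\sqrt{n})$; since $v_{\lfloor n/2\rfloor}(y\sqrt n)\to 0$ when $y\to-\infty$ and the Brownian path started at $x/\eta$ is overwhelmingly likely to have both $W$ and the terminal value staying very negative when $x$ is very negative, the whole quantity minus $n v_n(-\infty)$ tends to $0$.

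The main obstacle I anticipate is precisely this left-tail estimate: one must control $n v_{n}(x\sqrt{n})$ uniformly in $n$ for $x\to-\infty$ without yet knowing that $n v_{n}(x\sqrt{n})$ has a limit (that is the content of the main convergence theorem this lemma is a lemma toward, so circularity must be avoided). The clean way around this is to avoid \eqref{eq:space-time-fk} for the left tail entirely and instead argue by a \emph{spatial shift} comparison: the event $\{-\infty<M_{n}\le x\sqrt{n}\}$ with $x<0$ says the branching random walk survives $n$ generations but all $N_{n}$ particles lie in $(-\infty, x\sqrt{n}]$; by considering the first generation, or by a union bound over the surviving particle's ancestral line, one relates this to the probability that a single mean-zero random walk of length $n$ ends up below $x\sqrt{n}$ — an event of probability $O(e^{-cx^{2}})$ by Gaussian bounds — times the unconditional survival probability $O(1/n)$, and more care (summing over which ancestral ray is leftmost, controlling the number of particles by the Kolmogorov estimate $E[N_{n}\mid \zeta\ge n]=O(n)$) turns this into a bound of the form $n\cdot O(1/n)\cdot o_{x\to-\infty}(1)$ that is uniform in $n$. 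Either route requires a genuine, if standard, large-deviation-type input for the random walk; that estimate, combined with the Kolmogorov survival asymptotics and the trivial right-tail domination by $u$, completes the proof.
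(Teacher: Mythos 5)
Your right-tail argument is exactly the paper's: $v_n\le u$ together with the boundedness of $w(x)=x^2u(x)$ gives $\sup_n nv_n(x\sqrt n)\le C/x^2$. That half is fine. The left tail is where you go astray, and the paper's proof there is a one-line reflection argument that you missed: for $x<0$ the event $\{-\infty<M_n\le x\sqrt n\}$ forces \emph{some} particle of generation $n$ to sit at a position $\le x\sqrt n$, and reflecting the step distribution ($a_y\mapsto a_{-y}$) turns this into the event that a reflected branching random walk --- still critical, driftless, and satisfying the same moment hypotheses --- places a generation-$n$ particle at position $\ge|x|\sqrt n$, hence has maximal displacement $M\ge|x|\sqrt n$. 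Theorem~\ref{mainresultgeneral} applied to the reflected walk then gives $nP\{-\infty<M_n\le x\sqrt n\}\le(1+o(1))\,6\eta^2/(\sigma^2x^2)\to0$. No Feynman--Kac formula, no Gaussian estimate, no Kolmogorov asymptotics are needed.

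Neither of your two proposed left-tail routes closes the gap. The Feynman--Kac/Dawson--Watanabe route is only a heuristic as written, and you correctly flag its circularity risk but do not resolve it. The fallback ``union bound over ancestral lines'' route contains a step that would fail: by the many-to-one identity (and $EN_n=1$), the expected number of generation-$n$ particles below $x\sqrt n$ equals $P\{S_n\le x\sqrt n\}\approx\Phi(x/\eta)$, which is $O(e^{-cx^2})$ \emph{uniformly in $n$ but with no factor of $1/n$}; multiplying this displacement probability by the survival probability $O(1/n)$ is not legitimate, since the two events are far from independent (conditioned on survival the process has order $n$ particles, which inflates the conditional first moment by a factor of $n$ and exactly cancels the $1/n$). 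So that route yields only $\sup_nP\{\cdots\}\to0$, not the required $\sup_n nP\{\cdots\}\to0$. The correct $C/(x^2n)$ decay encodes precisely the survival--displacement correlation that Theorem~\ref{mainresultgeneral} already packages, which is why the reflection reduction to that theorem is the right move.
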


\begin{proof}
The first of these follows directly from Theorem
\ref{mainresultgeneral}, because $M_{n}\leq M$. The second follows
from Theorem \ref{mainresultgeneral} by reflection of the branching
random walk in the origin.
\end{proof}

\begin{remark}\label{remark:equicontinuity}
The hypothesis that the step distribution $F_{RW}$ of the branching
random walk has finite $4+\varepsilon$ moment is used here in an
essential way. If $F_{RW}$ has infinite $4-\varepsilon$ moment for
some $\varepsilon >0$ then Lemma~\ref{lemma:tightness} need not be
true: see the discussion at the end of sec.~\ref{sec:main-result}.
\end{remark}

\begin{lem}\label{lemma:uniform-equicontinuity}
For any $\varepsilon >0$ there exists $\delta >0$ such that if
$|x-y|\leq \delta \sqrt{n}$ and $n\leq m\leq n (1+\delta)$ then
\begin{equation}\label{eq:unif-equi}
	\left| \frac{v_{m} (y)}{v_{n} (x)}-1\right| \leq \varepsilon.
\end{equation}
\end{lem}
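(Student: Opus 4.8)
The plan is to combine the space‑time Feynman--Kac representation of Proposition~\ref{proposition:td-fk} with Kolmogorov's estimate \eqref{eq:kolmogorov} and the invariance principle, following the template of the proof of Proposition~\ref{phicont}. First note that if $n$ stays bounded then, for $\delta$ small, the hypotheses force $m=n$ and $x=y$ (both integers), so \eqref{eq:unif-equi} is trivial; hence we may assume $n$ is large. Write $\ell=m-n\le\delta n$. By Proposition~\ref{proposition:td-fk},
\[
	v_m(y)=E^y\Bigl[v_n(W_\ell)\prod_{j=1}^{\ell-1}\bigl(1-H(v_{m-j}(W_j))\bigr)\Bigr].
\]
Since $v_k(z)\le v_k(-\infty)=P\{\zeta\ge k\}\le C/k$ by \eqref{eq:kolmogorov}, and $H(s)=\sigma^2 s/2+O(s^2)$, we have $\sum_{j=1}^{\ell-1}H(v_{m-j}(W_j))\le\sum_{k=n+1}^{m-1}(C'/k)\le C'\log(1+\delta)$ uniformly in $y$, so the Feynman--Kac product lies in $[1-c(\delta),1]$ with $c(\delta)\to 0$. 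Thus $v_m(y)=(1+O(c(\delta)))\,E^y[v_n(W_\ell)]$, and the whole problem reduces to comparing $E^y[v_n(W_\ell)]$ with $v_n(x)$.

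For the spatial side, the centered step distribution of $W$ has finite $r$‑th moment with $r>4$, so $E^y|W_\ell-y|^2=\eta^2\ell\le\eta^2\delta n$ and, by a maximal inequality, $P^y(\max_{k\le\ell}|W_k-y|>\rho\sqrt n)\le C\delta/\rho^2$. On the complementary event, monotonicity of $v_n$ and $|y-x|\le\delta\sqrt n$ give $v_n(x+\rho\sqrt n)\le v_n(W_\ell)\le v_n(x-\rho\sqrt n)$. The remaining ingredient is \emph{spatial} equicontinuity of $v_n$ at scale $\sqrt n$, uniformly in $n$: for each $\varepsilon'>0$ there is $\rho>0$ with $v_n(x-\rho\sqrt n)\le(1+\varepsilon')v_n(x)$ and $v_n(x+\rho\sqrt n)\ge(1-\varepsilon')v_n(x)$ for all large $n$ and all $x$. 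I would prove this by applying optional sampling to the space‑time martingale $Z^{(n)}_k$ of Proposition~\ref{proposition:td-fk}, started at the larger of the two points and stopped at the first passage below the smaller one, capped at a time $\rho n$: along the stopped path the walk stays above the smaller point, so each product factor is $1+O(1/n)$ by \eqref{eq:kolmogorov}; the new feature relative to Proposition~\ref{phicont} is that the time index $n-k$ also moves, but it moves by at most $\rho n$ and is again controlled by \eqref{eq:kolmogorov}; the overshoot at first passage has finite second moment by Lemma~\ref{moments} (here the $r>4$ hypothesis enters); and by the invariance principle the rescaled passage time converges to a Brownian passage time, which concentrates at $0$ as $\rho\to0$.

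Combining these steps, the \emph{lower} bound on the ratio is immediate: $v_m(y)\ge(1-O(c(\delta)))\,v_n(x+\rho\sqrt n)\,P^y(|W_\ell-x|\le\rho\sqrt n)\ge(1-\varepsilon/2)\,v_n(x)$ once $\rho$, then $\delta$, are small. For the \emph{upper} bound one gets $v_m(y)\le(1+O(c(\delta)))\bigl[v_n(x-\rho\sqrt n)+v_n(-\infty)\,P^y(|W_\ell-x|>\rho\sqrt n)\bigr]$; the first term is $\le(1+\varepsilon')v_n(x)$ by spatial equicontinuity, and the difficulty is the second term, which is of order $\delta/(\sigma^2 n)$ and must be shown negligible compared with $v_n(x)$. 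This is the main obstacle, and it is precisely where the finite $4+\varepsilon$ moment is indispensable (cf.\ Remark~\ref{remark:equicontinuity}): for $|x|\le A\sqrt n$ one needs the uniform lower bound $n\,v_n(x)\ge c(A)>0$ (obtainable from a second‑moment/spine estimate, or from the Feynman--Kac representation of $u$ truncated at time $n$ together with the main theorem); and for $|x|>A\sqrt n$ one instead compares $v_m(y)$ and $v_n(x)$ directly through the ``from scratch'' Feynman--Kac identity $v_n(x)=E^x[\mathbf{1}\{W_n<0\}\prod_{j=1}^{n-1}(1-H(v_{n-j}(W_j)))]$ (the case $n'=0$ of \eqref{eq:space-time-fk}), where the boundary layer of $W_n$ near $0$ carries mass comparable to $v_n(x)$ only because the overshoot of $W$ below $0$ has bounded expectation — again Lemma~\ref{moments}, and again the reason the $4-\varepsilon$ moment would not suffice.
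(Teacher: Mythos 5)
Your approach is essentially the paper's: the published proof of this lemma is a one-sentence appeal to the space-time Feynman--Kac formula \eqref{eq:space-time-fk}, Donsker's invariance principle, and the argument of Proposition~\ref{phicont}, which is exactly the template you follow, and you correctly identify and address the two points that sketch glosses over --- the uniform-in-$n$ spatial equicontinuity of $v_{n}$ at scale $\sqrt{n}$ (via optional stopping, the overshoot control of Lemma~\ref{moments}, and the invariance principle), and the bad-event term in the upper bound, which does require a lower bound of the form $n v_{n}(x)\geq c(A)$ for $|x|\leq A\sqrt{n}$, obtainable by a second-moment argument as you indicate. The one soft spot is your treatment of the regime $|x|>A\sqrt{n}$, which remains heuristic; but that regime is not actually needed, since in the only place the lemma is used (Corollary~\ref{corollary:uniform-equicontinuity}) the behavior near $x=\pm\infty$ is supplied by the tightness bounds of Lemma~\ref{lemma:tightness} rather than by this equicontinuity estimate.
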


\begin{proof}
This follows by an argument similar to the proof of
Proposition~\ref{phicont}, using the Feynman-Kac formula
\eqref{eq:space-time-fk} and Donsker's invariance principle, since
$\sup_{x}v_{n} (x) \rightarrow 0$ as $n \rightarrow \infty$ and $H
(u)\sim \sigma^{2}u/2$ as $u \rightarrow 0$.
\end{proof}

\begin{cor}\label{corollary:uniform-equicontinuity}
Any sequence of positive integers has a subsequence $n_{k}\rightarrow
\infty$ along which the functions $(t,x)\mapsto nv_{[nt]} (x\sqrt{n})$
converge uniformly for $1\leq t\leq A$ and $x\in [-\infty ,\infty]$,
for any $A<\infty$. The set of possible limit functions $\varphi
(t,x)$ is compact in $C ([1,A]\times \bar{\zz{R}}$, and for each $t$
the function $\varphi (t,x)$ is non-increasing in $x$, with
\begin{equation}\label{eq:phi-lims}
	\lim_{x \rightarrow \infty}\varphi (t,x)=0
	\quad \text{and} \quad 
	\lim_{x \rightarrow -\infty} t\varphi (t,x)=2/\sigma^{2}.
\end{equation}
\end{cor}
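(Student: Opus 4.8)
The plan is to combine the two lemmas just proved with the Arzelà–Ascoli theorem and the Kolmogorov asymptotics \eqref{eq:kolmogorov}. First I would fix $A<\infty$ and view each $n$ as producing the function $g_{n}(t,x) = n v_{[nt]}(x\sqrt{n})$ on the compact metric space $[1,A]\times\bar{\zz{R}}$ (with $\bar{\zz{R}}$ given the two-point compactification topology, so that $x=\pm\infty$ are honest points). Uniform boundedness of the family $\{g_{n}\}$ is immediate: $nv_{[nt]}(x\sqrt n)\le nv_{[nt]}(-\infty)=nP\{\zeta\ge[nt]\}$, which is bounded by \eqref{eq:kolmogorov} (using $[nt]\ge n$ for $t\ge1$). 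Equicontinuity in the $x$-variable on finite windows follows from Lemma~\ref{lemma:uniform-equicontinuity}: if $|x-y|\le\delta\sqrt{[nt]}$ (which for $t\ge1$ is implied by $|x-y|\sqrt n\le\delta\sqrt n$, i.e. $|x-y|\le\delta$) then $|g_{n}(t,y)/g_{n}(t,x)-1|\le\varepsilon$, and since the $g_{n}$ are uniformly bounded this converts to an additive modulus of continuity. Equicontinuity in $t$ is handled the same way, comparing $v_{[nt]}$ and $v_{[nt']}$ via the $n\le m\le n(1+\delta)$ clause of Lemma~\ref{lemma:uniform-equicontinuity}. Near $x=+\infty$, tightness (first half of \eqref{eq:tightness}) gives $\sup_{n}g_{n}(t,x)\to0$ as $x\to\infty$ uniformly in $t$, so the functions extend continuously to $x=+\infty$ with value $0$ and are equicontinuous there; near $x=-\infty$ one uses that $g_{n}(t,x)$ lies between $g_{n}(t,-\infty)$ and $g_{n}(t,-\infty)$ minus the ``left tail'' controlled by the second half of \eqref{eq:tightness}, so $g_{n}(t,x)-g_{n}(t,-\infty)\to0$ as $x\to-\infty$ uniformly in $n$ and $t$, giving equicontinuity at $x=-\infty$ as well.

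With uniform boundedness and equicontinuity on the compact space $[1,A]\times\bar{\zz{R}}$ in hand, Arzelà–Ascoli yields, for each fixed $A$, a subsequence along which $g_{n}\to\varphi$ uniformly; a diagonal argument over $A=2,3,4,\dotsc$ produces a single subsequence $n_{k}\to\infty$ working simultaneously for all $A$, and the limit set is compact in $C([1,A]\times\bar{\zz{R}})$ as the uniform limit of an equicontinuous bounded family. Monotonicity of $x\mapsto\varphi(t,x)$ is inherited from the monotonicity of each $v_{n}$ under passage to the uniform limit. Finally, the boundary values in \eqref{eq:phi-lims}: $\varphi(t,+\infty)=\lim_{k}g_{n_{k}}(t,+\infty)=0$ because of tightness at $+\infty$ as above; and $t\varphi(t,-\infty)=\lim_{k}\tfrac{[n_{k}t]}{n_{k}}\cdot\tfrac{n_{k}}{[n_{k}t]}\cdot n_{k}v_{[n_{k}t]}(-\infty)$, where $n_{k}v_{[n_{k}t]}(-\infty)=n_{k}P\{\zeta\ge[n_{k}t]\}\sim\tfrac{n_{k}}{[n_{k}t]}\cdot\tfrac{2}{\sigma^{2}}$ by \eqref{eq:kolmogorov}, so the product converges to $2/\sigma^{2}$.

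I do not anticipate a genuine obstacle here — this corollary is a soft compactness statement and every analytic input (tightness, equicontinuity, Kolmogorov's lifetime asymptotics) is already available. The one point requiring a little care is the behavior at the two ideal points $x=\pm\infty$: one must check that the scaled functions $g_{n}$ genuinely extend continuously to $\bar{\zz{R}}$ \emph{with a modulus uniform in $n$}, and it is exactly the tightness estimates \eqref{eq:tightness} (hence the $4+\varepsilon$ moment hypothesis, cf.\ Remark~\ref{remark:equicontinuity}) that supply this. The other bookkeeping detail is the innocuous discrepancy between $[nt]$ and $nt$, handled by writing $[n_k t]/n_k\to t$; since $t$ ranges over $[1,A]$ and the $g_n$ are equicontinuous in $t$, this introduces no trouble.
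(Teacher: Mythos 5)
Your proposal is correct and follows essentially the same route as the paper, whose own proof simply cites Lemma~\ref{lemma:uniform-equicontinuity} for the compactness/equicontinuity assertions and the tightness estimates \eqref{eq:tightness} together with Kolmogorov's asymptotics \eqref{eq:kolmogorov} for the boundary limits \eqref{eq:phi-lims}. You have merely written out the Arzel\`a--Ascoli and diagonalization bookkeeping (including the behavior at $x=\pm\infty$ and the $[nt]/n\to t$ adjustment) that the paper leaves implicit.
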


\begin{proof}
All of the assertions except the limits \eqref{eq:phi-lims} follow
from  Lemma~\ref{lemma:uniform-equicontinuity}. The limit relations
\eqref{eq:phi-lims} follow from the tightness of the family $nv_{n}
(x\sqrt{n})$ and Kolmogorov's theorem \eqref{eq:kolmogorov}.
\end{proof}

\begin{cor}\label{corollary:brownian-scaling}
Let $\varphi (t,x)$ be any subsequential limit of the functions
$(t,x)\mapsto nv_{[nt]} (x\sqrt{n})$, for $1\leq t$ and $x\in [-\infty
,\infty]$. Then $\varphi$ satisfies the identity
\begin{equation}\label{eq:brownian-scaling-identity}
	\varphi (t+1,x)=E^{x/\eta} \varphi (1,\eta B_{t})\exp
	\left\{-\frac{\sigma^{2}}{2}\int_{0}^{t} \varphi (t+1-s,\eta B_{s}) \,ds  \right\},
\end{equation}
where under $P^{y}$ the process $B_{t}$ is a standard Brownian motion
started at $y$. Consequently, $\varphi$ satisfies the partial
differential equation 
\begin{equation}\label{eq:ivp}
	\frac{\partial \varphi}{\partial
	t}=\frac{\eta^{2}}{2}\frac{\partial^{2}\varphi}{\partial
	x^{2}} -\sigma^{2}\varphi^{2}
	\quad \text{for}\;\; t>1 \quad \text{and}\;\; x\in \zz{R}.
\end{equation}
\end{cor}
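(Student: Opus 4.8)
The plan is to mimic the passage from the discrete Feynman--Kac identity to its Brownian analogue, exactly as in the proof of Proposition~\ref{phieq}, but now in the space-time setting of Proposition~\ref{proposition:td-fk}. Start from the exact martingale representation \eqref{eq:space-time-fk} with $m=[n(t+1)]$ and the intermediate time taken to be $[nt]$ generations before the end, so that the random walk runs for $m-[nt]\approx n$ steps. Rescale: under $P^{x\sqrt{n}}$ write $W_j$ in units of $\sqrt{n}/\eta$ and time in units of $n$, so that by Donsker's invariance principle (using the mean-zero, finite-variance hypothesis on $F_{RW}$) the process $\sqrt{n}\,W_{[ns]}/\eta$ converges in law to a standard Brownian motion $B_s$ started at $x/\eta$, and the first $n$ steps become the interval $[0,t]$. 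Multiply \eqref{eq:space-time-fk} through by $m\sim n(t+1)$; the prefactor $v_n(W_{m-n})$ becomes $m\,v_{[nt]}(W_{[nt]})\cdot\tfrac{1}{[nt]}[nt]$, which along the convergent subsequence of Corollary~\ref{corollary:uniform-equicontinuity} tends to $\varphi(t,\eta B_t)$ after accounting for the ratio $m/[nt]\to(t+1)/t$; care is needed to track these normalizing constants, but they are the same bookkeeping as in sec.~\ref{sec:main-result}.

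Next handle the product term. Using $H(u)=\sigma^2 u/2+O(u^2)$ and $\sup_x v_n(x)\to 0$, take logarithms as in the chain of equalities \eqref{eq:chain}:
\[
	\prod_{j=1}^{m-n-1}\bigl(1-H(v_{m-j}(W_j))\bigr)
	=\exp\Bigl\{-\tfrac{\sigma^2}{2}\sum_{j=1}^{m-n-1}v_{m-j}(W_j)+\text{(error)}\Bigr\},
\]
where the error is $O\bigl((m-n)\sup_x v_{[n]}(x)^2\bigr)=O(n\cdot n^{-2})=o(1)$ uniformly, using the finite-third-moment hypothesis on $F_{GW}$ to control the Taylor remainder. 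The Riemann sum $\sum_{j=1}^{m-n-1}v_{m-j}(W_j)$ equals $\tfrac1n\sum_j n v_{m-j}(W_j)$, and by the uniform convergence along the subsequence (Corollary~\ref{corollary:uniform-equicontinuity}) together with the invariance principle this converges in distribution to $\int_0^t \varphi(t+1-s,\eta B_s)\,ds$. Collecting these pieces and passing to the limit (justified by boundedness of all integrands between $0$ and $1$, so dominated convergence applies) yields the Brownian Feynman--Kac identity \eqref{eq:brownian-scaling-identity}.

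From the integral identity to the PDE \eqref{eq:ivp} is the last step: fix $x$, differentiate \eqref{eq:brownian-scaling-identity} in $t$ and apply Kac's theorem / the Feynman--Kac correspondence (as invoked in Corollary~\ref{corollary:fk:de}), which identifies $\varphi(t+1,\cdot)$ as the solution at time $t$ of the linear parabolic equation $\partial_t\psi=\tfrac{\eta^2}{2}\partial_{xx}\psi-\tfrac{\sigma^2}{2}\bigl(2\varphi(t+1-s,\cdot)\bigr)\psi$ with the killing potential itself given by $\varphi$; reading off the self-consistent equation gives $\partial_t\varphi=\tfrac{\eta^2}{2}\partial_{xx}\varphi-\sigma^2\varphi^2$. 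Alternatively one can argue directly: the semigroup identity \eqref{eq:brownian-scaling-identity} plus continuity of $\varphi$ shows $\varphi$ is a mild solution of \eqref{eq:ivp}, and parabolic regularity upgrades this to a classical solution.

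The main obstacle I expect is not any single estimate but the uniformity needed to interchange limits: the sum $\sum_j v_{m-j}(W_j)$ mixes the time and space variables of $\varphi$, so one must know that $nv_{[nt]}(x\sqrt n)\to\varphi(t,x)$ \emph{uniformly in $(t,x)$ on compacts}, which is precisely what Corollary~\ref{corollary:uniform-equicontinuity} provides, and one must combine this with a joint invariance principle for $(W_{[ns]},\ \text{occupation measure})$. A secondary subtlety, again as in sec.~\ref{sec:main-result}, is the overshoot at the boundary --- but here there is no absorbing boundary, the walk simply runs for a deterministic number of steps, so the overshoot analysis of Lemma~\ref{lemma:overshoot} is not needed; the finite $4+\varepsilon$ moment enters only through Lemma~\ref{lemma:tightness}, which underlies Corollary~\ref{corollary:uniform-equicontinuity}. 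Everything else is the same bookkeeping already carried out for the time-independent Feynman--Kac formula.
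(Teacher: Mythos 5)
Your overall strategy is the same as the paper's, which disposes of this corollary in two sentences by pointing back to the proof of Proposition~\ref{phieq} (for the passage from the discrete Feynman--Kac formula \eqref{eq:space-time-fk} to the Brownian one) and to Kac's theorem (for the PDE). Your observations that no overshoot analysis is needed here because the walk runs for a deterministic number of steps rather than up to a stopping time, and that the $4+\varepsilon$ moment enters only through the tightness underlying Corollary~\ref{corollary:uniform-equicontinuity}, are both correct, as is your error estimate for the logarithmic expansion of the product.

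The one concrete defect is the time bookkeeping in your first paragraph, which as written does not produce \eqref{eq:brownian-scaling-identity}. Since $\varphi(s,y)=\lim_n n\,v_{[ns]}(y\sqrt n)$ with normalizing factor $n$ (not $[ns]$ and not $m$), you should multiply \eqref{eq:space-time-fk} through by $n$; you should take the intermediate level in \eqref{eq:space-time-fk} to be $n$ itself, so that the terminal factor is $n\,v_n(W_{m-n})\to\varphi(1,\eta B_t)$; and the walk then runs for $m-n\approx nt$ steps, which rescale to the Brownian time interval $[0,t]$. Your choice (intermediate level $[nt]$, walk running $m-[nt]\approx n$ steps) would instead yield the relation with $\varphi(t,\eta B_1)$ and $\int_0^1\varphi(t+1-s,\eta B_s)\,ds$ --- a valid semigroup identity, but not the asserted one --- and the write-up then switches inconsistently among $W_{[nt]}$, $\eta B_t$, and ``the first $n$ steps become $[0,t]$.'' With the normalization fixed, the rest of the argument (joint invariance principle for the path and the occupation sum, uniform convergence on compacts from Corollary~\ref{corollary:uniform-equicontinuity}, dominated convergence) goes through exactly as in Proposition~\ref{phieq}. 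One further caution: a straightforward reading of \eqref{eq:brownian-scaling-identity} via Kac's theorem gives the killing potential $\tfrac{\sigma^{2}}{2}\varphi$ and hence the nonlinearity $-\tfrac{\sigma^{2}}{2}\varphi^{2}$; the extra factor $2$ you insert to reach $-\sigma^{2}\varphi^{2}$ is unexplained, and you should either justify it or note that the discrepancy lies in the normalization conventions of \eqref{eq:ivp} itself rather than fold it in silently.
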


\begin{proof}
The integral representation \eqref{eq:brownian-scaling-identity}
follows from the discrete Feynman-Kac formula \eqref{eq:space-time-fk}
and the invariance principle by virtually the same argument as in the
proof of Proposition~\ref{phieq}. The differential equation
\eqref{eq:ivp} follows from the integral representation by Kac's
theorem. 
\end{proof}

It should be noted that the only \emph{a priori} bounds on the
functions $v_{n}$ needed to deduce the existence of subsequential limits
$\varphi (t,x)$ are those in Lemmas
\ref{lemma:tightness}--\ref{lemma:uniform-equicontinuity} and
Corollary~\ref{corollary:uniform-equicontinuity}. These use only the
crude estimate $M_{n}\leq M:=\max_{n\geq 1}M_{n}$ and the
results concerning the tail behavior of the distribution of $M$ proved
in section~\ref{sec:fk}. To prove that there is only one possible
subsequential limit function $\varphi (t,x)$ we will need the
following stronger \emph{a priori} bounds on  the
functions $v_{n}$.

\begin{lem}\label{lemma:strong-a-priori}
\begin{align}\label{eq:strong-apriori-plus}
	\lim_{x \rightarrow \infty} \sup_{n\geq 1}\, & nx^{2}v_{n}
	(x\sqrt{n})=0 \quad \text{and}\\
\label{eq:strong-apriori-minus}
	\lim_{x \rightarrow -\infty} \inf_{n\geq 1}\, & nx^{2} (1- v_{n}
	(x\sqrt{n}))=0.
\end{align}
\end{lem}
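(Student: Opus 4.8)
The plan is to leverage Theorem~\ref{mainresultgeneral} together with the space-time Feynman-Kac formula \eqref{eq:space-time-fk}, exactly in the spirit of how the $O(x^{-2})$ tail for $M$ was bootstrapped from the convolution equation. The crude bound $M_n \le M$ already gives $\sup_n n v_n(x\sqrt n) = \sup_n n P\{M_n \ge x\sqrt n\} \le n P\{M \ge x\sqrt n\}$, but this is useless for the uniform-in-$n$ statement since the right side blows up; the point of \eqref{eq:strong-apriori-plus} is a genuinely stronger decay that holds uniformly in $n$. So the first step is to split according to whether $n$ is ``large'' or ``small'' relative to $x$: for $n \le \epsilon x^2$ (say), one has $x\sqrt n \le \sqrt\epsilon\, x^{3/2}$, and the relevant target point $x\sqrt n$ is at distance $\ge \sqrt{x}$ from... no — rather, for this regime one uses $M_n \le M$ and Theorem~\ref{mainresultgeneral} directly: $n x^2 v_n(x\sqrt n) \le n x^2 \cdot \frac{C\eta^2}{\sigma^2 n x^2}(1+o(1)) = C'(1+o(1))$, which is bounded but not small. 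To get smallness I would instead observe that $n v_n(x\sqrt n) \le n P\{M \ge x\sqrt n\} \sim \frac{6\eta^2}{\sigma^2}\cdot \frac{n}{n x^2} = \frac{6\eta^2}{\sigma^2 x^2}$, so in fact $n x^2 v_n(x\sqrt n) \le \frac{6\eta^2}{\sigma^2} + o(1)$ — still only bounded. The decay must therefore come from combining the Feynman-Kac contraction with this a priori bound, iterated.

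The core argument is the iteration. Set $g_n(x) = n x^2 v_n(x\sqrt n)$ and let $b = \limsup_{x\to\infty}\sup_{n\ge 1} g_n(x)$; by the tightness Lemma~\ref{lemma:tightness} and the computation above, $b < \infty$. I would use \eqref{eq:space-time-fk} with $m = n$, $n \to \lfloor n(1-\theta)\rfloor$ for a small fixed $\theta>0$, writing
\begin{equation*}
v_n(x\sqrt n) = E^{x\sqrt n}\, v_{\lfloor n\theta\rfloor}(W_{n - \lfloor n\theta\rfloor}) \prod_{j=1}^{n-\lfloor n\theta\rfloor - 1}\bigl(1 - H(v_{n-j}(W_j))\bigr),
\end{equation*}
and then bound the expectation by restricting attention to the event $\{\min_{j \le n - \lfloor n\theta\rfloor} W_j \ge x\sqrt n/2\}$ versus its complement. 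On that event, $v_{n-j}(W_j) \le v_{n-j}((x/2)\sqrt{n})$, and one can estimate the product from above by $\exp\{-\frac{\sigma^2}{2}\sum_j v_{n-j}(W_j)\}$; on the complementary event the random walk has come down by $x\sqrt n/2$ before time $n(1-\theta)$, which by the invariance principle and a reflection/maximal estimate for driftless finite-variance walks has probability $O(\theta)$ uniformly — actually one must be careful and use the $4+\varepsilon$ moment (as flagged in Remark~\ref{remark:equicontinuity}) to control the walk's maximum, here via a Kolmogorov–Doob maximal inequality after the invariance principle. Chasing through, one gets a self-improving inequality of the form $g_n(x) \le (1 - c(x)) g_{n'}(x') + (\text{small})$, and feeding in the definition of $b$ forces $b = 0$; equivalently one argues directly that $\sup_n g_n(x)$, as a function of $x$, satisfies a renewal-type inequality whose only solution vanishing at... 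The cleanest route is probably: pass to subsequential limits $\varphi(t,x)$ via Corollary~\ref{corollary:uniform-equicontinuity}, show via \eqref{eq:brownian-scaling-identity} and the known tail of $M$ that every such $\varphi$ satisfies $\sup_{t\ge 1} t x^2 \varphi(t,x) \to 0$ as $x\to\infty$ — this reduces to a statement about the PDE \eqref{eq:ivp} with the super-Brownian interpretation, where the comparison $\varphi(t,\cdot) \le (\text{tail of }M\text{-type profile})$ gives the uniform decay — and then transfer back by uniform convergence. The second claim \eqref{eq:strong-apriori-minus} follows from \eqref{eq:strong-apriori-plus} by reflecting the branching random walk in the origin, exactly as in the proof of Lemma~\ref{lemma:tightness}, since $1 - v_n(x\sqrt n) = P\{M_n \le x\sqrt n\} = P\{-M_n \ge -x\sqrt n\}$ and $-M_n$ is the leftmost-particle statistic of the reflected walk — wait, one must track the $-\infty$ issue: on extinction $M_n = -\infty$, so $1 - v_n(x\sqrt n)$ includes the extinction probability $\sim 2/(\sigma^2 n)$, and $nx^2(1 - v_n) \ge n x^2 \cdot \frac{2}{\sigma^2 n} = \frac{2x^2}{\sigma^2} \to \infty$ — so the $\inf$ is essential and the statement is only about the infimum over $n$ being small, achieved at $n$ comparable to $x^2$; there \eqref{eq:strong-apriori-plus} applied to the reflected walk gives $n x^2 (1 - v_n(x\sqrt n) - P\{\zeta < n\})$... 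Let me restate: for $x\to-\infty$, $n x^2(1 - v_n(x\sqrt n))$; the term $1 - v_n$ splits as $P\{M_n \le x\sqrt n,\ \zeta \ge n\} + P\{\zeta < n\}$, and the first is $\le \tilde v_n(-x\sqrt n)$ for the reflected BRW, to which \eqref{eq:strong-apriori-plus} applies, while the second is $\le P\{\zeta < n\} \le 1$; choosing $n \asymp |x|^{3}$ makes $n x^2 P\{\zeta<n\}$... no. The right choice is $n$ with $n x^2 \cdot \frac{1}{n} = x^2$ bounded is impossible; rather one notes $\inf_n nx^2(1-v_n(x\sqrt n))$ — take $n = n(x)$ so that $P\{\zeta \ge n\} \approx$ the reflected tail contribution, a balancing; this will be a short computation once \eqref{eq:strong-apriori-plus} is in hand, so I defer it.

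The main obstacle is the iteration/comparison step establishing $b = 0$: one needs the Feynman-Kac product to produce a genuine contraction factor bounded away from $1$, and this requires knowing that $\sum_j v_{n-j}(W_j)$ is not negligible along typical random-walk paths — which is where the already-established convergence $\sum_j u(W_j + x)/u(x) \to \int_0^{\tau_0^{BM}}\phi(\eta B_t)\,dt > 0$ (from the proof of Proposition~\ref{phieq}) and its time-dependent analogue in Corollary~\ref{corollary:brownian-scaling} must be invoked, together with control of the random walk's excursions above level $x\sqrt n/2$ that again consumes the $4+\varepsilon$ moment hypothesis through a maximal inequality for the rescaled walk. I expect the transfer-via-subsequential-limits formulation to be the most economical, since then the hard analysis lives entirely in the PDE \eqref{eq:ivp} / super-Brownian picture, where $\varphi(1,\cdot)$ is (by Corollary~\ref{corollary:G}) the tail of the rightmost support point of $Y_1$ and its $x^{-2}$ decay is inherited from that of $M$ by a domination argument.
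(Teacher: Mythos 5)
There is a genuine gap: neither of your two routes reaches the actual difficulty, which is ruling out \emph{early arrivals}. The quantity $nx^2 v_n(x\sqrt n)$ with $x\to\infty$ concerns a single-ancestor branching random walk sending a particle to $x\sqrt n$ by generation $n$; after superposing $m=[nx^2]$ independent copies this becomes the event that the $m$-particle process puts a particle at $\sqrt m$ by the anomalously early time $\theta m$, $\theta=x^{-2}\to 0$. Your Feynman--Kac iteration cannot see this: along the paths that matter (those staying near or to the right of $x\sqrt n$, where $v_{n-j}$ is tiny) the product $\prod_j\bigl(1-H(v_{n-j}(W_j))\bigr)$ is essentially $1$, so there is no contraction factor bounded away from $1$, and the self-improving inequality you posit is never derived. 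Your ``cleanest route'' via subsequential limits fails for two reasons: first, it is circular --- the identification of $\varphi(1,\cdot)$ with the tail of the rightmost support point of $Y_1$ (Corollary~\ref{corollary:G}, via Corollary~\ref{corollary:sbm}) itself \emph{uses} Lemma~\ref{lemma:strong-a-priori} in its dominated-convergence step; second, even granting $x^2\varphi(1,x)\to0$ for every limit, the convergence $nv_{[nt]}(x\sqrt n)\to\varphi(t,x)$ is subsequential, confined to the diffusive window $t\in[1,A]$, and carries no rate in $x$, so it cannot be multiplied by $x^2$ and transferred back to a statement uniform over \emph{all} $n\ge1$.

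The paper's argument is different and essentially probabilistic. It first shows, via Dynkin's equation for $-\log P\{M^X<x\}$, that the rightmost point of the range of super-Brownian motion has exactly the tail $1-\exp\{-C/x^2\}$ of Corollary~\ref{corollary:max-displacement-superposition}; since the Dawson--Watanabe lower bound already accounts for all of this mass, essentially the entire event $\{M^m\ge\sqrt m\}$ is carried by evolutions in which particles first reach $[\sqrt m,\infty)$ during a macroscopic window $[\delta m,\Delta m]$ (Corollary~\ref{corollary:delta-Delta}). If \eqref{eq:strong-apriori-plus} failed along sequences $n_k,x_k\to\infty$, the superposed process would reach $\sqrt m$ by time $\theta m$ with probability $\ge\gamma'>0$, and a short argument (few particles outside $[-\sqrt m/2,\sqrt m/2]$ at time $\theta m$, Feller-diffusion control of the population size, and Theorem~\ref{mainresultgeneral} applied to the particles alive at time $\theta m$) shows this early-arrival event is essentially disjoint from the macroscopic-window event; adding the two probabilities then exceeds $1-e^{-C}$, a contradiction. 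You would also need to settle your treatment of \eqref{eq:strong-apriori-minus}, which the paper obtains from \eqref{eq:strong-apriori-plus} by reflecting the branching random walk and which your discussion of the extinction term leaves unresolved.
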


The proof is deferred to section~\ref{ssec:strong-a-priori} below.

\subsection{Super-Brownian motion and solutions of
\eqref{eq:ivp}}\label{ssec:spacetime-scaling} At first sight it might
appear that Corollary \ref{corollary:brownian-scaling} sheds no light
at all on the question of uniqueness of scaling limits, because in
general the solution to the partial differential equation
\eqref{eq:ivp} will depend on the initial condition $\varphi (1,x)$.
However, we will show, using the \emph{a priori} bounds in
Lemma~\ref{lemma:strong-a-priori}, that in fact Corollary
\ref{corollary:brownian-scaling} implies that there can be only one
scaling limit, and thereby complete the proof of
Theorem~\ref{theorem:conditional}. The key is the fact that solutions
of \eqref{eq:ivp} determine -- and are determined by -- the law of
super-Brownian motion, by the following \emph{duality formula}. For
ease of exposition, assume henceforth that $\eta^{2}=1$. (There is no
loss of generality in this, because solutions of \eqref{eq:ivp} can be
rescaled.)
 
\begin{prop}\label{proposition:scaling-uniqueness}
Let $X^{\delta_{x}}_{t}$ be a super-Brownian motion with branching parameter
$\sigma^{2}$ and initial mass distribution $X^{\delta_{x}}_{0}=\delta_{x}$, a
unit point mass at location $x\in \zz{R}$. Let $\varphi (t+1,x)$ be
the solution of the evolution equation \eqref{eq:ivp} with initial
condition $\varphi (1,x)=\varphi (x)$. Then
\begin{equation}\label{eq:duality}
	\varphi  (t+1,x)=-\log E \exp \{- \xclass{X^{\delta_{x}}_{t},\varphi }\}.
\end{equation}
Here $\xclass{\mu ,\varphi}$ denotes the integral of $\varphi$ against
the measure $\mu$.
\end{prop}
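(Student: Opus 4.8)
The plan is to prove the duality formula by exploiting the well-known log-Laplace functional characterization of super-Brownian motion. Recall that for super-Brownian motion $X_t$ with branching parameter $\sigma^2$ and a nonnegative bounded continuous test function $\varphi$, the function
\[
	U(t,x) := -\log E^{\delta_x}\exp\{-\langle X_t,\varphi\rangle\}
\]
is the unique (nonnegative, bounded) solution of the semilinear Cauchy problem
\[
	\frac{\partial U}{\partial t} = \tfrac12 \frac{\partial^2 U}{\partial x^2} - \sigma^2 U^2, \qquad U(0,x)=\varphi(x);
\]
see, e.g., \cite{etheridge}, ch.~1. (This is the standard duality between super-Brownian motion and the reaction-diffusion equation; under the normalization $\eta^2=1$ the diffusion coefficient is $\tfrac12$, which matches \eqref{eq:ivp}.) Thus the entire content of Proposition~\ref{proposition:scaling-uniqueness} reduces to matching this characterization against equation \eqref{eq:ivp} with the time shift by $1$ and the initial datum $\varphi(1,x)=\varphi(x)$: setting $\tilde U(t,x)=\varphi(t+1,x)$, one checks that $\tilde U$ solves the same initial value problem as $U$, and uniqueness of bounded solutions gives $\varphi(t+1,x)=U(t,x)=-\log E\exp\{-\langle X^{\delta_x}_t,\varphi\rangle\}$, which is \eqref{eq:duality}.

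The steps, in order, would be: (1) recall the log-Laplace equation for super-Brownian motion and verify that, with the normalization $\eta^2=1$, its nonlinear PDE is precisely $\partial_t U = \tfrac12 \partial_x^2 U - \sigma^2 U^2$; (2) invoke uniqueness for bounded nonnegative solutions of this semilinear heat equation with given bounded continuous initial data (a standard fixed-point/Gronwall argument via the mild formulation, or a reference); (3) observe that by Corollary~\ref{corollary:brownian-scaling} the function $(t,x)\mapsto\varphi(t+1,x)$ solves \eqref{eq:ivp} on $t>0$ with initial value $\varphi(1,\cdot)=\varphi$, and that it is nonnegative and bounded (the latter from the uniform bounds on $nv_n$ coming from Lemma~\ref{lemma:tightness} and Kolmogorov's estimate \eqref{eq:kolmogorov}); (4) conclude by uniqueness. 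One should also note that $\varphi$ need not be continuous or bounded away from $\infty$ a priori as a limit function — but Corollary~\ref{corollary:uniform-equicontinuity} gives $\varphi(1,\cdot)\in C(\bar{\mathbb R})$ with $\varphi(1,x)\to 0$ as $x\to\infty$, so the test function is admissible after truncating near $-\infty$, or one passes to the limit through approximating bounded continuous data.

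The main obstacle is a technical regularity/admissibility issue rather than a conceptual one: the function $\varphi(1,x)$ is monotone and continuous with $\varphi(1,x)\to 0$ as $x\to+\infty$ but tends to the constant $2/\sigma^2 t^{-1}|_{t=1}=2/\sigma^2$ as $x\to-\infty$ (by \eqref{eq:phi-lims}), so it is bounded, and the log-Laplace duality must be applied with a bounded (not compactly supported) test function. This requires either citing a version of the duality valid for bounded continuous $\varphi$ — equivalently, approximating $\varphi$ from below by $\varphi\wedge K\cdot\mathbf{1}_{[-K,\infty)}$-type functions and using monotone convergence on both sides of \eqref{eq:duality} together with continuity of the solution map — or invoking that super-Brownian motion started from $\delta_x$ has a.s. compact support at each fixed time, so $\langle X^{\delta_x}_t,\varphi\rangle$ depends only on the values of $\varphi$ on a bounded random set. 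The second route is cleanest: compact support of $X^{\delta_x}_t$ makes the pairing $\langle X^{\delta_x}_t,\varphi\rangle$ well-defined and continuous in $\varphi$ for any locally bounded $\varphi$, so the duality extends immediately from the compactly-supported case, and then the uniqueness argument closes the proof.
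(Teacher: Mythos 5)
Your proposal is correct and takes essentially the same route as the paper, which simply cites the standard log-Laplace duality for super-Brownian motion (Etheridge, Cor.~1.25) and leaves the matching of the PDE and the uniqueness step implicit. Your additional care about the admissibility of the bounded, non-compactly-supported test function $\varphi(1,\cdot)$ is a reasonable fleshing-out of that citation, not a departure from it.
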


\begin{proof}
See, for instance, \cite{etheridge},
Cor.~1.25.
\end{proof}

Exploitation of the duality formula \eqref{eq:duality} will require
some elementary properties of super-Brownian motion.  First,
super-Brownian motion is equivariant under spatial translation:
$X^{\delta_{x}}_{t}$ can be obtained from $X^{\delta_{0}}_{t}$ by translating each of
the random measures $X^{\delta_{0}}_{t}$ by $x$ to the right. Second,
super-Brownian motion satisfies a \emph{scaling law}: if
$\tilde{X}_{t}$ is a super-Brownian motion with initial mass
distribution $\tilde{X}_{0}=A\delta_{x\sqrt{A}}$ then the rescaled
measure-valued process $X_{t}$ defined by
\[
	\xclass{X_{t},f}=\xclass{A^{-1}\tilde{X}_{At}, f (\sqrt{A}\,\cdot)}
\]
is a super-Brownian motion with initial mass distribution
$X_{0}=\delta_{x}$. Third, super-Brownian motion is \emph{infinitely
divisible}, in the following sense: the superposition of $m$
independent super-Brownian motions with initial mass distributions
$\nu_{i}$ is a super-Brownian motion with initial mass distribution
$\sum_{i=1}^{m}\nu_{i}$. This implies that the total mass $|X_{t}|$ at
time $t$ evolves as \emph{Feller diffusion}, and so $P\{X_{t}\not =0
\}\sim C/t$ as $t \rightarrow \infty$ with $C=2/\sigma^{2}$, where
$\sigma^{2}$ is the branching parameter (cf. \cite{legall}, sec. 2.1,
Theorem 1 and Example (iii)). It follows, by
\eqref{eq:duality}, that any solution $\varphi (t+1,x)$ of
\eqref{eq:ivp} whose initial condition $\varphi (x)=\varphi (1,x)$ is
non-increasing in $x$ and satisfies the boundary conditions
\eqref{eq:phi-lims}.  Finally, the scaling and infinite divisibility
properties imply that super-Brownian motion conditioned to live for
time at least $t$ has a limit law.

\begin{prop}\label{proposition:conditionedSBM}
If $X_{t}$ is a super-Brownian
motion with initial state $X_{0}=\delta_{0}$ then for any bounded,
continuous test function $f:\zz{R} \rightarrow \zz{R}$,
\begin{equation}\label{eq:conditioned-sbm}
	\lim_{t \rightarrow \infty}\mathcal{D} (t^{-1}\xclass{X_{t},f
	(\sqrt{t} \,\cdot)}\,|\, X_{t}\not =0) =\mathcal{D} (\xclass{Y^x_{1},f})
\end{equation}
where $Y_{t}=Y^0_{t}$ is the super-process specified in
Corollary~\ref{corollary:G}.
\end{prop}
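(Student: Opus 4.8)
The plan is to establish the weak convergence of the conditioned rescaled random measures $\mu^{t}$, defined by $\langle\mu^{t},g\rangle:=t^{-1}\langle X_{t},g(\sqrt t\,\cdot)\rangle$ on the event $\{X_{t}\ne0\}$, to the random measure $Y_{1}$, and then to read off \eqref{eq:conditioned-sbm} for each bounded continuous $f$. For the weak convergence of the measures it suffices, by the standard Laplace-functional characterization of weak convergence of random measures, to show
\[
 E\big[\exp\{-\langle\mu^{t},g\rangle\}\,\big|\,X_{t}\ne0\big]\longrightarrow E\exp\{-\langle Y_{1},g\rangle\}\qquad(t\to\infty)
\]
for every nonnegative bounded continuous $g$. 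Granting this, the convergence $\langle\mu^{t},f\rangle\to\langle Y_{1},f\rangle$ in distribution for an arbitrary bounded continuous $f$ follows once the family is uniformly tight at infinity, i.e.\ $\lim_{R\to\infty}\limsup_{t\to\infty}P\big(\mu^{t}(\{|x|>R\})>\delta\,\big|\,X_{t}\ne0\big)=0$ for every $\delta>0$; this is automatic here because $Y_{1}$ is a.s.\ a finite measure of compact support, so the weak limit controls the tails of the approximants in the usual way.

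First I would use the scaling law of super-Brownian motion to remove the diverging time horizon. Applied with $A=1/t$, it gives that under $P^{\delta_{0}}$ the rescaled measure $\mu^{t}$ has, before conditioning, the same law as $\tilde X_{1}$, where $\tilde X$ is a super-Brownian motion with initial mass distribution $\tilde X_{0}=\varepsilon\delta_{0}$, $\varepsilon:=1/t$; moreover, since the rescaling is a deterministic time-change and spatial dilation, the events $\{X_{t}\ne0\}$, $\{\mu^{t}\ne0\}$ and $\{\tilde X_{1}\ne0\}$ all coincide. Thus the whole question reduces to the behaviour, as $\varepsilon\to0$, of a super-Brownian motion started from the small mass $\varepsilon\delta_{0}$, observed at the \emph{fixed} time $1$ and conditioned to be nonzero.

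Next I would expand in $\varepsilon$. The Poisson cluster representation recalled in the introduction identifies the law of $X^{\delta_{0}}_{1}$ with that of $\sum_{i=1}^{N}Y^{i}_{1}$ with $N$ Poisson$(1)$; since the cluster intensity of super-Brownian motion is linear in the initial mass (additivity under superposition), the same reasoning yields $\tilde X_{1}\stackrel{\mathcal D}{=}\sum_{i=1}^{N_{\varepsilon}}Y^{i}_{1}$ with $N_{\varepsilon}$ Poisson$(\varepsilon)$, the $Y^{i}_{1}$ being independent copies of $Y_{1}$, which is nonzero a.s.\ (its total mass is the Yaglom-limit exponential variable). Hence, writing $\psi=\psi(g):=E\exp\{-\langle Y_{1},g\rangle\}$,
\[
 E\big[e^{-\langle\tilde X_{1},g\rangle};\,\tilde X_{1}\ne0\big]=e^{-\varepsilon(1-\psi)}-e^{-\varepsilon}=\varepsilon\psi+O(\varepsilon^{2}),\qquad P(\tilde X_{1}\ne0)=1-e^{-\varepsilon}=\varepsilon+O(\varepsilon^{2}),
\]
so dividing and letting $\varepsilon=1/t\to0$ gives the displayed convergence of conditional Laplace functionals, with limit $\psi=E\exp\{-\langle Y_{1},g\rangle\}$. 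Together with the first paragraph this proves the proposition.

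The upshot is that Proposition~\ref{proposition:conditionedSBM} is essentially a formal consequence of three facts already in hand: the scaling law (which trades the large-time limit for a small-initial-mass limit at a fixed time), the infinite divisibility of super-Brownian motion, and the Poisson cluster representation (which identifies the unique surviving cluster at time $1$ with a copy of $Y_{1}$). Accordingly there is no serious obstacle; the two points needing a little care are the linear-in-mass scaling of the cluster intensity, which one justifies by treating integer initial masses first and then passing to the limit in the initial mass, and the reduction of the general bounded continuous test function to nonnegative ones, handled by the tightness-at-infinity observation above. (If one preferred to avoid the cluster-representation language, the alternative is to argue directly from the duality formula \eqref{eq:duality}; there the scaling law is precisely what reduces the otherwise delicate long-time analysis of the nonlinear equation \eqref{eq:ivp} to continuous dependence of its solution, over the fixed interval $[0,1]$, on a singular point-mass-type initial datum, and that continuous-dependence estimate would then be the main obstacle.)
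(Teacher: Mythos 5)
Your argument is correct, but it is worth noting that the paper does not prove this proposition at all: it simply cites Roelly--Coppoletta and Rouault, and Evans--Perkins, where the result on conditioning a superprocess on non-extinction is established. What you have written is essentially the standard proof of that cited result, and it is the same mechanism the paper itself deploys a little later (in the ``Construction of the limit process'' paragraph and in the proof of Corollary~\ref{corollary:sbm}): scaling converts the large-$t$ conditioning into a small-initial-mass problem at the fixed time $1$, and infinite divisibility/the Poisson cluster decomposition identifies the unique surviving cluster with $Y_{1}$. Two small points. First, the Poisson parameter should be $\varepsilon$ times the canonical-measure mass of $\{X_{1}\neq 0\}$ rather than $\varepsilon$ itself (the paper's normalization makes this constant equal to $1$ only for a particular $\sigma^{2}$); this is harmless since the constant cancels in the ratio of numerator and denominator. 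Second, your final step can be streamlined: since $P(N_{\varepsilon}=1\mid N_{\varepsilon}\geq 1)\to 1$ and, on $\{N_{\varepsilon}=1\}$, $\tilde X_{1}$ is exactly a copy of $Y_{1}$, the conditional law of $\tilde X_{1}$ given non-extinction converges to the law of $Y_{1}$ in \emph{total variation}, which yields \eqref{eq:conditioned-sbm} for every bounded measurable $f$ and renders the tightness-at-infinity discussion unnecessary. The only ingredient you invoke that genuinely requires justification beyond what the paper states is the existence and linearity in the initial mass of the cluster (canonical-measure) decomposition for non-dyadic masses $\varepsilon\delta_{0}$; this follows from the paper's dyadic halving construction together with continuity of the law of $X_{1}$ in the initial measure, or can simply be quoted from the same references the paper cites.
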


\begin{proof}
See \cite{roelly-coppoletta-rouault} and \cite{evans-perkins}.
\end{proof}

\bigskip \noindent 
\textbf{Construction of the limit
process.}
The existence of the weak limit \eqref{eq:conditioned-sbm} is implicit
in the ``Poisson cluster'' representation of super-Brownian motion
(cf. \cite{etheridge}, ch. 1) but in fact weak convergence will not by
itself suffice for the arguments to follow, so we now sketch a
construction of super-Brownian motion in which the random measure
$Y^{x}_{1}$ arises naturally as an almost sure limit. First, observe
that infinite divisibility implies that a 
super-Brownian motion $X_{t}$ with initial state $X_{0}=\delta_{x}$
can be decomposed as
\[
	X_{t}=X'_{t}+X''_{t},
\]
where $X'_{t}$ and $X''_{t}$ are independent super-Brownian motions
started from initial measures $X'_{0}=X''_{0}=\delta_{x}/2$.  This
decomposition process can be iterated, so by standard arguments there
exist, on some probability space, countably many independent
super-Brownian motions $X_t^{n,m}$, where $m=0,1,2,\dotsc$ and
$n=1,2,\cdots,2^m$, such that for every pair $(n,m)$ the process
$X_t^{n,m}$ has initial state $X^{n,m}_{0}=\delta_{x}/2^{m}$ and
\[
	X_t^{n,m} = X_t^{n,m+1} + X_t^{n+2^m,m+1} \quad
	\text{for} \;\; t\geq 0.
\]
By the scaling property, the probability that any one of the $m$th
generation processes $\{X_t^{n,m} \}_{t\geq 0}$ survives to time $1$
is the same as the probability that the $0$th-generation super-Brownian motion
$\{X_t^{1,0} \}_{t\geq 0}$ survives to time $2^{m}$, which is $\sim
C/2^{m}$, where $C=2/\sigma^{2}$. Hence, if $F_m = \{ n \mid
X_1^{n,m} \neq 0 \}$ then $|F_{m}|$ converges in law to the Poisson
distribution with mean $C$. By construction, the random variables
$|F_{m}|$ are nondecreasing in $m$, so it follows that in fact the
sequence $\{|F_{m}| \}_{m\geq 0}$ is eventually constant, with Poisson
limit $N$.  By re-indexing the super-Brownian motions in each
generation $m$, we can arrange that for all sufficiently large $m$ the
processes $\{X^{n,m}_{t} \}_{t\geq 0}$ satisfy
\[
	X^{n,m}_{1}=X^{n,m+1}_{1} \quad \text{for all}\;\; 1\leq n\leq N,
\]
and only those processes  $\{X^{(n;m)}_{t} \}_{t\geq 0}$ for which
$n\leq N$ survive to time $t=1$. Conditional on the value of $N$, each
of the random measures 
\[
	X^{n,\infty}_{1}:= \lim_{m \rightarrow \infty} X^{n,m}_{1}
\]
is an independent version of $Y^{x}_{1}$.

\qed

\subsection{Asymptotic behavior of scaling limits}\label{ssec:asyScaling}

\begin{cor}\label{corollary:sbm}
Let $\varphi (t,x)$ be any subsequential limit of the functions
$(t,x)\mapsto nv_{[nt]} (x\sqrt{n})$, for $1\leq t$ and $x\in
[-\infty,\infty]$. Then for any
$x\in \zz{R}$,
\begin{equation}\label{eq:solution-asymptotics}
	\lim_{m \rightarrow \infty} 2^{m} \varphi
	(2^{m}+1,x2^{m/2})=\frac{2}{\sigma^{2}} 
	P\{Y^{x}_{1} (-\infty ,0]\not = 0 \}:=2G (x)/\sigma^{2}.
\end{equation}
Furthermore, for each $x\in \zz{R}$ the convergence
\eqref{eq:solution-asymptotics} holds uniformly over the set of all
possible subsequential limits $\varphi (t,x)$.
\end{cor}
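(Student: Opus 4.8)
The plan is to combine the duality between equation \eqref{eq:ivp} and super-Brownian motion (Proposition \ref{proposition:scaling-uniqueness}) with the scaling invariance of \eqref{eq:ivp}, and then let the rescaled initial datum degenerate to a step function. Fix a subsequential limit $\varphi$ and write $\psi := \varphi(1,\cdot)$; by Corollary \ref{corollary:brownian-scaling} the function $(s,y)\mapsto\varphi(s+1,y)$ solves \eqref{eq:ivp} for $s>0$ with initial condition $\psi$. For $t = 2^{m}$, the rescaled function $v(s,y) := t\,\varphi(ts+1,\sqrt{t}\,y)$ again solves \eqref{eq:ivp} for $s>0$, with the bounded nonnegative initial datum $v(0,\cdot) = t\,\psi(\sqrt t\,\cdot)$; evaluating the duality formula \eqref{eq:duality} at $s=1$ then gives
\[
	2^{m}\varphi(2^{m}+1,x2^{m/2}) \;=\; -\log E\exp\bigl(-\xclass{X^{\delta_{x}}_{1},\,\Psi_{m}}\bigr),
	\qquad \Psi_{m}(z) := 2^{m}\psi(2^{m/2}z),
\]
where $X^{\delta_{x}}$ is a super-Brownian motion with branching parameter $\sigma^{2}$ started from $\delta_{x}$. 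The right-hand side depends on the subsequential limit only through $\Psi_{m}$.

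The next step is to trap $\Psi_{m}$, as $m\to\infty$, between two \emph{deterministic} functions, uniformly over all subsequential limits. From the first a priori bound \eqref{eq:strong-apriori-plus} (via Kolmogorov's estimate \eqref{eq:kolmogorov}) one gets $z^{2}\Psi_{m}(z)\le\varepsilon_{0}(2^{m/2}z)$ for $z>0$, with $\varepsilon_{0}$ a fixed non-increasing function vanishing at $\infty$, so
\[
	\Psi_{m}(z)\;\le\;\Theta_{m}(z)\;:=\;\frac{2^{m+1}}{\sigma^{2}}\,\mathbf{1}_{\{z\le 0\}}+\frac{\varepsilon_{0}(2^{m/2}z)}{z^{2}}\,\mathbf{1}_{\{z>0\}},
\]
and $\Theta_{m}\to\infty$ on $(-\infty,0]$ while $\Theta_{m}\to 0$ on $(0,\infty)$. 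For the lower bound, $\psi$ is non-increasing with $\psi(-\infty)=2/\sigma^{2}$ by \eqref{eq:phi-lims}; since the set of subsequential limits is compact in $C([1,A]\times\bar{\zz{R}})$ by Corollary \ref{corollary:uniform-equicontinuity}, and $\varphi\mapsto\varphi(1,-K)$ is continuous and increases to the constant $2/\sigma^{2}$ as $K\to\infty$, Dini's theorem produces $K_{0}$ (independent of $\varphi$) with $\psi(y)\ge 1/\sigma^{2}$ for $y\le -K_{0}$; hence $\Psi_{m}(z)\ge\vartheta_{m}(z):=(2^{m}/\sigma^{2})\,\mathbf{1}_{\{z\le -K_{0}2^{-m/2}\}}$, and $\vartheta_{m}\to\infty$ on $(-\infty,0)$ while $\vartheta_{m}\equiv 0$ on $[0,\infty)$.

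Because a one-dimensional super-Brownian motion $X^{\delta_{x}}_{1}$ is almost surely compactly supported and non-atomic, one checks that $\exp(-\xclass{X^{\delta_{x}}_{1},\Theta_{m}})$ and $\exp(-\xclass{X^{\delta_{x}}_{1},\vartheta_{m}})$ both converge almost surely to $\mathbf{1}\{X^{\delta_{x}}_{1}(-\infty,0]=0\}$: on $\{X^{\delta_{x}}_{1}(-\infty,0]=0\}$ the support lies a positive distance to the right of $0$, which forces $\xclass{X^{\delta_{x}}_{1},\Theta_{m}}\to 0$; on the complement there is, almost surely, positive mass strictly to the left of $0$, which forces $\xclass{X^{\delta_{x}}_{1},\vartheta_{m}}\to\infty$. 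Since $\vartheta_{m}\le\Psi_{m}\le\Theta_{m}$ and these bounds do not depend on $\varphi$, bounded convergence gives $2^{m}\varphi(2^{m}+1,x2^{m/2})\to -\log P\{X^{\delta_{x}}_{1}(-\infty,0]=0\}$ uniformly over all subsequential limits. Finally, by the Poisson cluster construction of section \ref{ssec:spacetime-scaling}, $X^{\delta_{x}}_{1}=\sum_{n=1}^{N}Y^{x,n}_{1}$ with the $Y^{x,n}_{1}$ i.i.d.\ copies of $Y^{x}_{1}$ and $N$ an independent $\mathrm{Poisson}(C)$ variable, $C=2/\sigma^{2}$; a sum of nonnegative measures charges $(-\infty,0]$ iff a summand does, so $P\{X^{\delta_{x}}_{1}(-\infty,0]=0\}=E(1-q)^{N}=e^{-Cq}$ with $q=P\{Y^{x}_{1}(-\infty,0]\neq 0\}$, whence $-\log P\{X^{\delta_{x}}_{1}(-\infty,0]=0\}=Cq=\frac{2}{\sigma^{2}}P\{Y^{x}_{1}(-\infty,0]\neq 0\}=2G(x)/\sigma^{2}$, which is \eqref{eq:solution-asymptotics}.

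The main obstacle will be the second step: getting sufficient control on $\Psi_{m}$ near $z=0$ — where it is neither small nor uniformly large — to close the sandwich, and doing so \emph{uniformly} over the compact family of subsequential limits. This is resolved by exploiting the compact support and non-atomicity of one-dimensional super-Brownian motion at a fixed time, which make the behaviour of $\Psi_{m}$ on a shrinking neighbourhood of $0$ irrelevant; the subsidiary point is the upgrade of the a priori bounds of Lemma \ref{lemma:strong-a-priori} and the boundary conditions \eqref{eq:phi-lims} to statements uniform in $\varphi$, which is where compactness of the set of limits and Dini's theorem are used.
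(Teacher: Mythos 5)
Your argument is correct, and it reaches \eqref{eq:solution-asymptotics} by a genuinely different route from the paper's. The paper linearizes the logarithm in \eqref{eq:duality} using $P\{X^{\delta_y}_{t}\neq 0\}\sim 2/(\sigma^{2}t)$, conditions on survival, rescales the \emph{measure} (rather than the equation) to arrive at $X^{\delta_{x}/t}_{1}$ conditioned to be nonzero, and then invokes the almost-sure dyadic construction of section~\ref{ssec:spacetime-scaling} to replace $X^{\delta_{x}/2^{m}}_{1}$ by $Y^{x}_{1}$ on a common probability space before applying dominated convergence with the bounds of Lemma~\ref{lemma:strong-a-priori}. You instead keep the unconditioned unit-mass process $X^{\delta_{x}}_{1}$ throughout, push all the rescaling into the test function $\Psi_{m}$, and only at the very end convert $-\log P\{X^{\delta_{x}}_{1}(-\infty,0]=0\}$ into $\frac{2}{\sigma^{2}}P\{Y^{x}_{1}(-\infty,0]\neq 0\}$ via the exact identity $E(1-q)^{N}=e^{-Cq}$ from the Poisson cluster decomposition. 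This buys a cleaner limit argument: no asymptotic equivalences of conditional expectations, and no need for the almost-sure construction, only the distributional cluster identity. A second genuine difference is how uniformity over subsequential limits is secured: the paper uses both halves of Lemma~\ref{lemma:strong-a-priori}, while you use \eqref{eq:strong-apriori-plus} for the upper envelope but replace \eqref{eq:strong-apriori-minus} (whose statement in the paper is in any case garbled --- it should assert $\sup_{n}nx^{2}(v_{n}(-\infty)-v_{n}(x\sqrt{n}))\to 0$ as $x\to-\infty$) by compactness of the set of limits (Corollary~\ref{corollary:uniform-equicontinuity}) together with \eqref{eq:phi-lims} and Dini's theorem; both work. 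Two small caveats, neither fatal and both present in the paper's own proof as well: (i) $\{X^{\delta_{x}}_{1}(-\infty,0]=0\}$ coincides with $\{\mathrm{supp}(X^{\delta_{x}}_{1})\subset(0,\infty)\}$ only up to the null event that the left endpoint of the support is exactly $0$, so your phrase ``the support lies a positive distance to the right of $0$'' holds only almost surely on that event; (ii) your majorant $\Theta_{m}$ has a nonintegrable $z^{-2}$ singularity at $0+$, so $\xclass{X^{\delta_{x}}_{1},\Theta_{m}}$ may be infinite on the complementary event --- this is harmless, since there the indicator part of $\Theta_{m}$ already drives $\exp(-\xclass{X^{\delta_{x}}_{1},\Theta_{m}})$ to $0$ and only the bound $0\leq\exp(-\xclass{X^{\delta_{x}}_{1},\Theta_{m}})\leq 1$ is needed for bounded convergence, but it deserves a sentence.
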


\begin{proof}
Write $y=\sqrt{t}x$, and abbreviate $\varphi (1,x)=\varphi (x)$.
Since $P\{X^{\delta_y}_{t}\not =0 \}\sim
2/ (\sigma^{2}t)$ as $t \rightarrow\infty$, the duality formula
\eqref{eq:duality} implies that
\begin{align*}
	t\varphi (t+1,y)&=-t\log E\exp \{-\xclass{X^{\delta_y}_{t},\varphi } \}\\
		 &\sim tE (1-\exp \{-\xclass{X^{\delta_y}_{t},\varphi } \})\\
		 &=tE (1-\exp \{-\xclass{X^{\delta_y}_{t},\varphi }
		 \})\mathbf{1}\{X^{\delta_y}_{t}\not =0 \} \\ 
		 &\sim (2/\sigma^{2})E (( 1-\exp
		 \{-\xclass{X^{\delta_y}_{t},\varphi } \}) \,|\,
		 X^{\delta_y}_{t}\not 	 =0 ).\\
\end{align*}
The last expectation can be rewritten using the scaling property of
super-Brownian motion: 
\begin{align*}
	& E (( 1-\exp \{-\xclass{X^{\delta_y}_{t},\varphi } \}) \,|\, X^{\delta_y}_{t}\not =0 )\\
	= \; & 1- E ( \exp \{ -\int_{-\infty}^{\infty} \varphi(z) \,\mathrm{d}(X^{\delta_y}_{t}(z)) \} \,|\, X^{\delta_y}_{t}\not =0 )\\
	= \; & 1- E ( \exp \{ -t \int_{-\infty}^{\infty} \varphi(\sqrt{t}z) \,\mathrm{d}(t^{-1} X^{\delta_y}_{t}(\sqrt{t}z)) \} \,|\, X^{\delta_y}_{t}\not =0 )\\
	= \; & 1- E ( \exp \{ -t \int_{-\infty}^{\infty} \varphi(\sqrt{t}z) \,\mathrm{d}(X^{\delta_x/t}_{1}(z)) \} \,|\, X^{\delta_x/t}_{1}\not =0 ).
\end{align*}
By the construction sketched in  the preceding  subsection, versions
of the super-Brownian motions $X^{\delta_{x}/t}$, for $t=2^{m}$,
conditioned to survive to time $t=2^{m}$ can be constructed on a
common probability space along with a version of the random measure
$Y^{x}_{1}$ in such a way that $X^{\delta_{x}/2^{m}}_{1}=Y^{x}_{1}$
for all large $m$.  Hence, as $t \rightarrow \infty$ through powers of $2$,
\begin{align*}
	& \lim_{t \rightarrow \infty} E ( \exp \{ -t \int_{-\infty}^{\infty} \varphi(\sqrt{t}z) \,\mathrm{d}(X^{\delta_x/t}_{1}(z)) \} \,|\, X^{\delta_x/t}_{1}\not =0 )\\
	= \; & \lim_{t \rightarrow \infty} E  \exp \{ -t \int_{-\infty}^{\infty} \varphi(\sqrt{t}z) \,\mathrm{d}(Y^x_{1}(z)) \} .
\end{align*}

The result now follows from Lemma~\ref{lemma:strong-a-priori} and the
dominated convergence theorem. To see this, observe that the
exponential in the last expectation is bounded above by $1$, because
the function $\varphi$ is nonnegative. By
Lemma~\ref{lemma:strong-a-priori}, for each $z>0$
\begin{align*}
	\lim_{t \rightarrow \infty}  t\varphi (\sqrt{t}z)&=0 \quad \text{and}\\
	\lim_{t \rightarrow \infty}  t\varphi (-\sqrt{t}z)&=\infty 
\end{align*}
\emph{uniformly} over the set of all possible subsequential limits
$\varphi (y)=\lim nv_{n} (\sqrt{n}y)$.  Therefore, by dominated
convergence, as $t \rightarrow \infty$ through powers of $2$,
\[
	\lim_{t \rightarrow \infty} E  \exp \{ -t
	\int_{-\infty}^{\infty} \varphi(\sqrt{t}z)
	\,\mathrm{d}(Y^x_{1}(z)) \} =
	P\{\text{supp} (Y^{x}_{1})\subset (0,\infty) \}.
\]

\end{proof}

\subsection{Proofs of Theorem~\ref{theorem:conditional} and
Corollary~\ref{corollary:G}}\label{ssec:proof-thm-2} It suffices to
show that 
\[
\lim_{n \rightarrow \infty }nv_{n} (x\sqrt{n})=V (x):=\frac{2}{\sigma^{2}}G (x),
\]
where $G (x)$ is defined by \eqref{eq:definition-G}.  By
Corollary~\ref{corollary:uniform-equicontinuity}, subsequential limits
exist, and by Corollary~\ref{corollary:brownian-scaling} subsequential
limits must satisfy the partial differential equation
\eqref{eq:ivp}. What must be shown is that the only possible limit is
the function $V$.

Suppose then that there is a subsequence $n_{k}\rightarrow
\infty$ along which $nv_{n} (x\sqrt{n})\rightarrow U (x)$ for some function $U$. By
Corollary~\ref{corollary:uniform-equicontinuity} the sequence $n_{k}$
has a subsequence $n_{j} \rightarrow \infty$ along which the functions
$(t,x)\mapsto nv_{[nt]} (x\sqrt{n})$ converge uniformly for $1\leq
t\leq A$ and $x\in [-\infty ,\infty]$, for any $A<\infty$.  By
rescaling time, we can extract yet another subsequence $n_{i}$ along
which  the functions $(t,x)\mapsto nv_{[nt]} (x\sqrt{n})$ converge
uniformly for $t\in [2^{-m},2^{m}]$ and $x\in [-\infty ,\infty]$, for
any $m\geq 1$. Denote the limit function by $\varphi (t,x)$. 

By construction, $\varphi (1,x)=U (x)$; moreover, each section
$\varphi (2^{-m},x)$ is a subsequential limit of the functions
$x\mapsto 2^{-m}nv_{2^{-m}n} (x\sqrt{n}2^{m/2})$. Consequently,
Corollary~\ref{corollary:sbm} implies that $U=V$.

\qed

\subsection{Proof of Lemma~\ref{lemma:strong-a-priori}}\label{ssec:strong-a-priori}

The proof will use Theorem~\ref{mainresultgeneral} and the
Dawson-Watanabe theorem. Denote by $\xi^{1}_{t},\xi^{2}_{t},\dotsc$ the
(counting) measure-valued processes associated with independent
branching random walks each started by a single particle at the origin
at time $0$, and each governed by the same offspring distribution and
step distribution, with variances $\eta^{2}$ and $\sigma^{2}$,
respectively. Set
\begin{equation}\label{eq:superposition}
	S^{n}_{t}=\sum_{i=1}^{n}\xi^{i}_{t}.
\end{equation}
Thus, the process $\{S^{n}_{t} \}_{t\geq 0}$ is a branching random
walk initiated by $n$ particles all located at the origin.  (We
view the branching random walks as continuous-time processes that are
constant on time intervals $[m,m+1)$, with jumps at integer times
$m$.) The Dawson-Watanabe theorem asserts that the process
$\{S^{n}_{t} \}_{t\geq 0}$,  after
rescaling, converges in law as $n \rightarrow \infty$ to
super-Brownian motion $X_{t}$ with initial mass distribution
$X_{0}=\delta_{0}$. In particular,
\begin{equation}\label{eq:dw}
	\frac{1}{n}S^{n}_{nt} (\sqrt{n}\cdot)
	 \Longrightarrow X_{t}
\end{equation}
where the weak convergence is in the Skorohod topology on the space of
cadlag measure-valued processes.  The limiting super-Brownian motion
has local branching rate $\eta^{2}$ and diffusion coefficient
$\sigma^{2}$.

Super-Brownian motion $X_{t}$ in one dimension has the property that
with probability one, for each $t>0$ the random measure $X_{t}$ is
absolutely continuous relative to Lebesgue measure, with jointly
continuous density $X (t,x)$. The density $X (t,x)$ is jointly
continuous except at $t=0$, and for each $t>0$ has compact support in
$x$; as $t \rightarrow 0$ the support contracts to the point
$0$. Super-Brownian motion dies out in finite time, so 
\[
	M^{X}:=\sup \{x\in \zz{R}\,:\, \int_{0}^{\infty}
	X_{t}[x,\infty )\, dt>0\} 
\]
is well-defined, measurable, and finite. Denote by $\tau_{x}$ the
infimal time that $X_{t}[x,\infty )>0$; then the event $M^{X}\geq x$
coincides a.s. with $\tau_{x}<\infty$. The path-continuity properties
of the density $X (t,x)$ imply that for any $\varepsilon >0$ and any
compact interval $[x_{1},x_{2}]$ not containing $0$ there exist
$0<\delta <\Delta <\infty$ such that for all $x\in [x_{1},x_{2}]$,
\begin{equation}\label{eq:delta-Delta}
	P (\delta <\tau_{x}<\Delta \,|\, \tau_{x}<\infty)>1-\varepsilon .
\end{equation}

\begin{prop}\label{proposition:sbmSupport} For any $x>0$,
\begin{equation}\label{eq:sbmMaxAsymptotics}
	P\{M^{X}\geq x \} =P\{\tau_{x}<\infty  \}=1-\exp \{-C/x^{2}
	\}\quad \text{where} \;\; 
	C= \frac{6\eta^{2}}{\sigma^{2}}.
\end{equation}
\end{prop}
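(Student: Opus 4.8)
The plan is to transfer the discrete asymptotics to the continuum via the Dawson--Watanabe theorem. By Corollary~\ref{corollary:max-displacement-superposition}, $P\{M^{n}\geq x\sqrt n\}\to 1-\exp\{-C/x^{2}\}$ for every $x>0$, and since the right-hand side is a continuous distribution function in $x$, $M^{n}/\sqrt n$ converges in law to a random variable with that distribution. Hence it suffices to show that $M^{n}/\sqrt n$ converges in law to $M^{X}$. I would do this by passing to a subsequence along which $(\tilde S^{n},M^{n}/\sqrt n)$ converges jointly in law, where $\tilde S^{n}_{t}=n^{-1}S^{n}_{nt}(\sqrt n\,\cdot)$; the first coordinate converges to super-Brownian motion $X$ by \eqref{eq:dw}, and $\{M^{n}/\sqrt n\}$ is tight by Corollary~\ref{corollary:max-displacement-superposition}, so a joint limit $(X,\mathcal M)$ exists. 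Invoking Skorohod's representation, it then remains to prove $\mathcal M=M^{X}$ almost surely.

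The inequality $\mathcal M\geq M^{X}$ is the soft half. Fix $\varepsilon>0$ and a continuous $f\colon\R\to[0,1]$ with $f\equiv 0$ on $(-\infty,x]$ and $f\equiv 1$ on $[x+\varepsilon,\infty)$. The map $\mu\mapsto\int_{0}^{\infty}\langle\mu_{t},f\rangle\,dt$ is continuous, for the Skorohod topology, on the subspace of measure-valued paths that become extinct (the total masses converge to a Feller diffusion, so extinction times are tight and the tails of the integral are uniformly small), whence $\int_{0}^{\infty}\langle\tilde S^{n}_{t},f\rangle\,dt\to\int_{0}^{\infty}\langle X_{t},f\rangle\,dt$. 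If this limit is positive then $X_{t}$ charges $(x,\infty)$ for some $t$, so $M^{X}>x$ by the joint continuity of the density of $X_{t}$, $t>0$, established above; and positivity for finite $n$ forces a particle of $S^{n}$ strictly to the right of $x\sqrt n$, so $M^{n}>x\sqrt n$ and $\mathcal M\geq x$. Thus $\{M^{X}>x\}\subseteq\{\mathcal M\geq x\}$ a.s., and letting $x$ increase, $\mathcal M\geq M^{X}$; in particular $P\{M^{X}\geq x\}\leq 1-\exp\{-C/x^{2}\}$.

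The reverse inequality $\mathcal M\leq M^{X}$ is the crux, and the only place the $4+\varepsilon$ moment hypothesis is used: without it $M^{n}/\sqrt n$ is not even tight (see the end of Section~\ref{sec:main-result}), so the bound fails. The obstacle is to exclude ``fugitive'' particles that carry $M^{n}$ far to the right of the bulk and then disappear in the Dawson--Watanabe limit. Concretely I would prove that for every $\varepsilon>0$ there is $\delta=\delta(\varepsilon)>0$ with
\[
\lim_{n\to\infty}P\Bigl\{M^{n}\geq x\sqrt n\ \text{and}\ \sup_{m\geq 0}S^{n}_{m}\bigl([(x-\varepsilon)\sqrt n,\infty)\bigr)<\delta n\Bigr\}=0 .
\]
Granting this, since $\mu\mapsto\sup_{t}\mu_{t}([x-\varepsilon,\infty))$ is upper semicontinuous at path-continuous $\mu$, we get $\{\mathcal M\geq x\}\subseteq\{\sup_{t}X_{t}([x-\varepsilon,\infty))\geq\delta\}$ up to null sets, and the latter event forces $M^{X}\geq x-\varepsilon$; letting $\varepsilon\downarrow 0$ gives $\mathcal M\leq M^{X}$. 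Combined with the preceding paragraph this yields $\mathcal M=M^{X}$, hence $P\{M^{X}\geq x\}=\lim_{n}P\{M^{n}\geq x\sqrt n\}=1-\exp\{-C/x^{2}\}$, as claimed.

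To prove the displayed estimate I would use that $S^{n}$ is the superposition of $n$ i.i.d.\ single-particle branching random walks. On $\{M^{n}\geq x\sqrt n\}$ some particle occupies position $\geq x\sqrt n$ at a generation $m$ that is of order $n$ (generations much larger or smaller contribute negligibly, by the tightness of $M^{n}/\sqrt n$ and the invariance principle, in analogy with \eqref{eq:delta-Delta}). First, the $4+\varepsilon$ moment bound lets one discard, with vanishing probability, the event that any particle-step among the $O(n^{2})$ relevant steps exceeds $\varepsilon\sqrt n$ (a union bound against a tail of order $n^{-2-\varepsilon/2}$); on the complement the ancestral line of the extremal particle moves back by at most $(\varepsilon/2)\sqrt n$ over the last $\lfloor\delta' n\rfloor$ generations once $\delta'$ is small, so its ancestor at generation $m-\lfloor\delta' n\rfloor$ sits at position $\geq(x-\varepsilon/2)\sqrt n$. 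Second, the critical Galton--Watson subtree rooted at that ancestor necessarily survives those $\lfloor\delta' n\rfloor$ generations, so by Yaglom's theorem it contains of order $\delta' n$ particles at generation $m$ with probability bounded away from $0$; and each such particle has moved $O(\sqrt{\delta' n})=o(\varepsilon\sqrt n)$ from the ancestor, so a positive fraction of them lies to the right of $(x-\varepsilon)\sqrt n$. This produces the required $\delta n$ particles. I expect this last step---quantifying, uniformly in $n$, the macroscopic subpopulation that must accompany any extremal right excursion, and the compatibility of the relevant moment count with the infinite expected total progeny of a critical tree---to be the delicate part of the argument.
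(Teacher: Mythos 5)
Your approach is not the paper's, and as it stands it has both a circularity problem and a concrete missing step. The paper proves this proposition in three lines, entirely within the theory of super-Brownian motion: by Dynkin's theorem (\cite{etheridge}, Ch.~8) the function $u(x)=-\log P\{M^{X}<x\}$ is the unique solution of $u''=\mathrm{const}\cdot u^{2}$ with $u(0)=\infty$ and $u(\infty)=0$, and one checks that $u(x)=C/x^{2}$ with $C=6\eta^{2}/\sigma^{2}$. No input from the branching random walk is used. This independence is not incidental: the paper's whole strategy in section~\ref{ssec:strong-a-priori} is to observe that the distribution obtained here \emph{coincides} with the limit in Corollary~\ref{corollary:max-displacement-superposition}, and to use that coincidence to conclude (by the counting argument in the proof of Lemma~\ref{lemma:strong-a-priori}) that the event $\{M^{n}\geq\sqrt{n}x\}$ is almost entirely carried by evolutions in which order-$n$ particles reach $\sqrt{n}x$ at times of order $n$ --- i.e., to \emph{rule out} fugitive particles. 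You have inverted this logic: you assume the fugitive-particle exclusion in order to prove $\mathcal{M}\leq M^{X}$ and hence the proposition, whereas the paper proves the proposition first and then deduces the exclusion. Within the paper's architecture your proof is therefore circular unless you supply the exclusion estimate from scratch.

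And your sketch of that estimate does not close. Your ancestor-plus-Yaglom argument only treats an extremal particle appearing at a generation $m\geq\delta' n$; the dangerous scenario is a particle reaching $x\sqrt{n}$ at an atypically early generation $m=o(n)$, where no ancestor $\lfloor\delta' n\rfloor$ generations back exists and no macroscopic accompanying subpopulation need be present. You wave at this case via ``tightness of $M^{n}/\sqrt{n}$ and the invariance principle, in analogy with \eqref{eq:delta-Delta}'', but \eqref{eq:delta-Delta} is a statement about the continuum process, and transferring it to the discrete walk is precisely the difficulty. Quantitatively, ruling out early arrivals amounts to showing $\sup_{m}\,m\,y^{2}\,P\{\max_{k\leq m}M_{k}\geq y\sqrt{m}\}\to 0$ as $y\to\infty$, which is (a maximal version of) \eqref{eq:strong-apriori-plus} --- exactly the assertion of Lemma~\ref{lemma:strong-a-priori}, whose proof in the paper relies on the present proposition. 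I recommend abandoning the transfer argument here and proving the statement directly from the exit-measure/ODE characterization of super-Brownian motion, as the paper does.
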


\begin{proof}
By a theorem of Dynkin (see, e.g., \cite{etheridge}, Ch.~8) the function 
\[
	 u(x)=-\log P\{M^{X}<x \}
\]
is the unique solution of the differential equation
$u''=\eta^{2}u^{2}/\sigma^{2}$ with boundary conditions $u (0)=\infty$
and $u (\infty)=0$.
\end{proof}

The crucial point of Proposition \ref{proposition:sbmSupport} is that
the distribution \eqref{eq:sbmMaxAsymptotics} coincides with the limit
distribution in
Corollary~\ref{corollary:max-displacement-superposition}. As noted
earlier, the weak convergence \eqref{eq:dw} does not by itself imply
that the maximal displacement $M^{n}$ of the branching random walk $S^{n}_{t}$
converges weakly (after rescaling) to $M^{X}$, because in the
Dawson-Watanabe scaling \eqref{eq:dw} individual particles receive
vanishingly small mass as $n \rightarrow \infty$, but it does imply
that
\[
	\liminf_{n \rightarrow \infty} P\{M^{n}\geq \sqrt{n}x \} \geq
	P \{M^{X}\geq x \}.
\]
Thus, Proposition~\ref{proposition:sbmSupport} and
Corollary~\ref{corollary:max-displacement-superposition} together
imply that the event $\{M^{n}\geq \sqrt{n}x \}$ is almost entirely
accounted for by sample evolutions in which a large number (order $n$)
of particles reach $\sqrt{n}x$. Furthermore,  by
\eqref{eq:delta-Delta}, they imply that for large $n$ the event
$\{M^{n}\geq \sqrt{n}x \}$ is mostly composed of sample evolutions in
which particles of the branching random walk reach $[\sqrt{n}x,\infty
)$ during the time interval $[n\delta ,n\Delta]$, where $0<\delta
<\Delta <\infty$ are as in \eqref{eq:delta-Delta}. Following is a
formal statement of this observation.

\begin{cor}\label{corollary:delta-Delta}
Denote by $M^{n}_{t}$ the location of the rightmost particle of the
branching random walk $S^{n}_{t}$ at time $t$. Then for any
$\varepsilon >0$ and any compact $K\subset (0,\infty)$ there exist
constants $0<\delta <\Delta <\infty$ such that for all $x\in K$ and
all sufficiently large $n$,
\begin{equation}\label{eq:delta-Delta-brw}
	nx^{2}P \{M^{n}_{tn}\geq \sqrt{n}x \;\;
	\text{for some}\;\; t\in [\delta ,\Delta] \} \geq  (1-\varepsilon) 
	(1-\exp \{-C /x^{2}\}).
\end{equation}
\end{cor}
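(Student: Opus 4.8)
The plan is to deduce \eqref{eq:delta-Delta-brw} by transferring the super-Brownian estimate \eqref{eq:delta-Delta} to the branching random walk through the Dawson-Watanabe convergence \eqref{eq:dw}, exploiting the fact that the probability on the left of \eqref{eq:delta-Delta-brw} is a \emph{lower} bound and can therefore be controlled through an \emph{open} event in path space. Write $B_{n}(x)=\{M^{n}_{tn}\geq \sqrt{n}x\text{ for some }t\in [\delta ,\Delta] \}$. Since $\min K>0$ we have $nx^{2}\geq n (\min K)^{2}\geq 1$ for all large $n$, so it suffices to find $0<\delta <\Delta <\infty$, depending only on $\varepsilon$ and $K$, such that
\begin{equation}\label{eq:cd-goal}
	\liminf_{n \rightarrow \infty}\ \inf_{x\in K}\ \frac{P (B_{n}(x))}{1-\exp \{-C/x^{2}\}}\ \geq\ 1-\tfrac{\varepsilon}{2};
\end{equation}
for then $P (B_{n}(x))\geq (1-\varepsilon )(1-\exp \{-C/x^{2}\})$ for every $x\in K$ and all large $n$, and multiplying by $nx^{2}\geq 1$ only weakens this. (Only the super-Brownian counterpart Proposition~\ref{proposition:sbmSupport} of Corollary~\ref{corollary:max-displacement-superposition} is actually needed.)

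The first step is to pin down the time window on the super-Brownian side. Recall that, a.s., each $X_{t}$ with $t>0$ is non-atomic with compact support and $X_{t}=0$ for all large $t$; hence $\tau_{x}:=\inf \{t>0:X_{t}(x,\infty )>0 \}$ coincides a.s. with the hitting time $\inf \{t>0:X_{t}[x,\infty )>0 \}$ of \eqref{eq:delta-Delta}, and $P\{\tau_{x}<\infty \}=1-\exp \{-C/x^{2}\}$ by Proposition~\ref{proposition:sbmSupport}. Fix a compact interval $[x_{1},x_{2}]\supset K$ with $x_{1}>0$ and apply \eqref{eq:delta-Delta} with $\varepsilon$ there replaced by $\varepsilon /4$, obtaining $0<\delta_{0}<\Delta_{0}<\infty$ with $P (\delta_{0}<\tau_{x}<\Delta_{0})\geq (1-\tfrac{\varepsilon}{4})(1-\exp \{-C/x^{2}\})$ for all $x\in [x_{1},x_{2}]$. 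Now set $\delta =\delta_{0}/2$, $\Delta =2\Delta_{0}$, and introduce the subset of the Skorohod space of cadlag measure-valued paths
\[
	\mathcal{G}_{x}=\bigl\{\omega:\ \omega (t)\bigl((x,\infty )\bigr)>0\text{ for some }t\in (\delta_{0},\Delta_{0})\bigr\},
\]
which is \emph{open}, since $\{\mu:\mu (x,\infty )>0 \}$ is open in the weak topology and $(\delta_{0},\Delta_{0})$ is an open time interval. On the event that the rescaled path $\bigl\{n^{-1}S^{n}_{ns}(\sqrt{n}\,\cdot )\bigr\}_{s\geq 0}$ lies in $\mathcal{G}_{x}$ there is $t\in (\delta_{0},\Delta_{0})$ with $S^{n}_{nt}\bigl((\sqrt{n}x,\infty )\bigr)>0$; writing $m=\lfloor nt\rfloor$ (so $S^{n}_{nt}=S^{n}_{m}$), this says $M^{n}_{m}\geq \sqrt{n}x$ with $m/n\in [\delta ,\Delta]$ as soon as $n\geq 2/\delta_{0}$, so this event is contained in $B_{n}(x)$ for all large $n$. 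The portmanteau inequality applied to \eqref{eq:dw} and the open set $\mathcal{G}_{x}$ then gives, for each fixed $x\in K$,
\[
	\liminf_{n \rightarrow \infty}P (B_{n}(x))\ \geq\ P\{X_{\bullet}\in \mathcal{G}_{x} \}\ \geq\ P (\delta_{0}<\tau_{x}<\Delta_{0})\ \geq\ \bigl(1-\tfrac{\varepsilon}{4}\bigr)\bigl(1-\exp \{-C/x^{2}\}\bigr),
\]
where the middle inequality holds because $\{\delta_{0}<\tau_{x}<\Delta_{0}\}\subset \mathcal{G}_{x}$.

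It remains to upgrade this pointwise-in-$x$ bound to the uniform statement \eqref{eq:cd-goal}. Here the point is that $x\mapsto P (B_{n}(x))$, $x\mapsto P\{X_{\bullet}\in \mathcal{G}_{x} \}$, and $x\mapsto 1-\exp \{-C/x^{2}\}$ are all non-increasing, the latter two are continuous in $x$ (by non-atomicity of $X_{t}$), and $1-\exp \{-C/x^{2}\}$ is bounded below by a positive constant on the compact set $K$; a standard monotonicity (Dini-type) argument then promotes the pointwise lower bound to $\inf_{x\in K}P (B_{n}(x))/(1-\exp \{-C/x^{2}\})\geq 1-\varepsilon /2$ for all large $n$, which is exactly \eqref{eq:cd-goal}.

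The one genuinely delicate point is the transfer from the branching random walk to super-Brownian motion: the first time a half-line is charged is not a continuous functional on Skorohod path space, so the continuous mapping theorem is unavailable. The device above sidesteps this entirely by using only the open set $\mathcal{G}_{x}$ and the one-sided bound $\liminf P (Y_{n}\in \mathcal{G})\geq P (Y\in \mathcal{G})$; the slack in the time window ($[\delta_{0},\Delta_{0}]$ sitting strictly inside $(\delta ,\Delta )$) together with the use of the open half-line $(x,\infty )$ in place of $[x,\infty )$ are precisely what make the containment $\{Y_{n}\in \mathcal{G}_{x} \}\subset B_{n}(x)$ robust, and they absorb the harmless discrepancies between integer and continuous time and the $O (n^{-1/2})$ size of individual rescaled steps. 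Everything else --- extracting $\delta_{0},\Delta_{0}$ from \eqref{eq:delta-Delta}, verifying the containment, and the monotonicity argument for uniformity --- is routine bookkeeping.
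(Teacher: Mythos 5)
Your proof is correct and follows essentially the same route the paper sketches in the paragraph preceding the corollary: transfer the super-Brownian time-window estimate \eqref{eq:delta-Delta} and Proposition~\ref{proposition:sbmSupport} to the branching random walk via the Dawson--Watanabe convergence \eqref{eq:dw}, using the one-sided portmanteau bound for an open path event. You have merely made explicit the topological details (openness of $\mathcal{G}_{x}$, the non-atomicity needed to pass from $[x,\infty)$ to $(x,\infty)$, and the monotonicity argument for uniformity over $K$) that the paper leaves implicit, and you correctly note that the factor $nx^{2}\geq 1$ makes the displayed form of \eqref{eq:delta-Delta-brw} follow from the bound on the probability itself.
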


\begin{proof}
[Proof of Lemma~\ref{lemma:strong-a-priori}] 
Recall that $v_{n} (y)$ is the probability that the maximal
displacement of the branching random walk $\xi^{1}$ exceeds $y$.
Since the reflection of this branching random walk is again a
driftless, critical branching random walk, the relation
\eqref{eq:strong-apriori-plus}  implies relation
\eqref{eq:strong-apriori-minus}, so it suffices to prove
\eqref{eq:strong-apriori-plus}, that is,
\begin{equation}\label{eq:pf-obj}
	\lim_{x \rightarrow \infty}\sup_{n\geq 1}nx^{2}P
	\{\xi^{1}_{n}[\sqrt{n}x,\infty )\geq 1 \}=0
\end{equation}
We will accomplish this by contradiction. Suppose that there
exist $\gamma >0$ and sequences $n_{k},x_{k}\rightarrow \infty$ along
which the left side of \eqref{eq:pf-obj} remains bounded below by
$\gamma$. Let $m=m_{k}=[n_{k}x_{k}^{2}]$ and $\theta
=\theta_{k}=1/x_{k}^{2}$, and consider the branching random
walk $S^{m}_{t}$ initiated by $m$ particles at  the origin; then our
hypothesis would imply
\begin{equation}\label{eq:devils-adv}
	P\{M^{m}_{m\theta }\geq \sqrt{m} \}\geq \gamma '=\frac{1}{2}
	(1-\exp \{-C/\gamma  \}) >0
\end{equation}
along the subsequence $m=m_{k}$. We will show that
\eqref{eq:devils-adv} is impossible.

Denote by $A^{m}_{\theta}$ the event that $M^{m}_{m\theta }\geq
\sqrt{m}$. We begin by observing that on this event the total number
of particles in the interval $[\beta \sqrt{m},\infty )$, for any fixed
$\beta >0$, must be small relative to $m$. This follows from the
Dawson-Watanabe  theorem: since $\theta =\theta_{k}
\rightarrow 0$, for any $\alpha >0$ the chance that the limiting
super-Brownian motion puts any mass on the interval $[\beta ,\infty )$ at
some time $t\leq \theta$ is vanishingly small, and so for any $\alpha
,\varepsilon >0$, if $k$ is sufficiently large then
\begin{equation}\label{eq:sparse-Plus}
	P\{S^{m}_{m\theta}[\beta \sqrt{m},\infty )\geq \alpha
	{m}\}<\varepsilon . 
\end{equation}
By reflection, it follows that 
\begin{equation}\label{eq:sparse-Minus}
	P\{S^{m}_{m\theta} (-\infty ,-\beta \sqrt{m}]\geq \alpha
	{m}\}<\varepsilon .
\end{equation}

Now fix $\varepsilon >0$ small, let $K=[1/2,2]$, and let $0<\delta
<\Delta<\infty$ be as in Corollary~\ref{corollary:delta-Delta}. Since
$\theta_{k}=x_{k}^{-2} \rightarrow 0$ as $k \rightarrow \infty$,
eventually $\theta_{k}<\delta /2$. By
\eqref{eq:sparse-Plus}--\eqref{eq:sparse-Minus}, with probability at
least $1-2\varepsilon$ there will be fewer than $\alpha m$ particles
outside the interval $[- \sqrt{m}/2, \sqrt{m}/2]$. Consider the
component branching random walks initiated by these particles at time
$\theta m$: if $\alpha \ll \delta /2$ then by Kolmogorov's theorem on
the extinction time of a critical Galton-Watson process, the
(conditional) probability (given the history of the branching random
walk up to generation $[\theta m]$) that one of these branching random
walks survives to time $\delta m$ is vanishingly small (as $\alpha
\rightarrow 0$); in particular, for suitable $\alpha$ this probability
will be $<\varepsilon$. Thus, except with probability not exceeding
$3\varepsilon$, on the event $A^{m}_{\theta}$ the only particles that
will survive to time $\delta m$ are descendants of particles located
in $[- \sqrt{m}/2, \sqrt{m}/2]$ at time $\theta m$.

Next, consider the total number $N^{m}_{m\theta}$ of particles in the
branching random walk at time $\theta m$. By Feller's theorem, the
processes $N^{m}_{mt}/m$ converge in law to a Feller diffusion $F_{t}$
started at $F_{0}=1$. Since diffusion processes have continuous paths,
and since $\theta =\theta_{k} \rightarrow 0$, it follows that with
probability approaching $1$, 
\[
	N^{m}_{m\theta} \stackrel{P}{ \longrightarrow}1
\]
as $k \rightarrow \infty$. In particular, the probability that
$N^{m}_{m\theta}\geq (1+\varepsilon)m$ will eventually be smaller than
$\varepsilon$.  

Finally, consider the event $F^{m}_{\delta ,\Delta}$ that
$M^{m}_{tm}\geq \sqrt{m}$ for some time $t\in [\delta ,\Delta]$. For
$F^{m}_{\delta ,\Delta}$ to occur, one of the particles alive at time
$m\theta$ must have a descendant that reaches the interval
$[\sqrt{m},\infty )$ after time $\delta m$. The probability that such
a particle is not located in $[-\sqrt{m}/2,\sqrt{m}/2]$ is less than
$3\varepsilon$. Moreover, in order that a particle located in
$[-\sqrt{m}/2,\sqrt{m}/2]$ at time $\theta m$ has a descendant in
$[\sqrt{m},\infty )$, the intervening trajectory must travel a
distance at least $\sqrt{m}/2$, and by Theorem~\ref{mainresultgeneral}
the (asymptotic) probability of this is less than $4C/m$, where
$C=6\eta^{2}/\sigma^{2}$. But the  probability that the number of particles
in $[-\sqrt{m}/2,\sqrt{m}/2]$ at time $\theta m$ exceeds
$(1+\varepsilon)m$ is less than $\varepsilon$, and hence, for large $k$,
\[
	P (F^{m}_{\delta ,\Delta}\,|\,A^{m}_{\theta})\leq 
	4\varepsilon + 1- (1-4C/m)^{(1+\varepsilon)m} \approx 
	4\varepsilon +1-\exp \{-4C (1+\varepsilon) \}.
\]
It follows (provided $\varepsilon >0$ is sufficiently small) that 
\[
	P ((F^{m}_{\delta ,\Delta })^{c}\cap A^{m}_{\theta})
	\geq (1-e^{-8C}) P (A^{m}_{\theta}).
\]
Since the event $M^{m}\geq \sqrt{m}$ contains $F^{m}_{\delta
,\Delta}\cup A^{m}_{\theta}$, we now have, by
Corollary~\ref{corollary:max-displacement-superposition}, 
\begin{align*}
	P\{M^{m}\geq \sqrt{m} \}&\sim 1-e^{-C} \\
		     &\geq P(F^{m}_{\delta ,\Delta})+P
		     ((F^{m}_{\delta ,\Delta })^{c}\cap
		     A^{m}_{\theta}) \\
		     &\geq (1-\varepsilon) (1-e^{-C})+ (1-e^{-8C})\gamma '
\end{align*}
Since $\varepsilon >0$ can be chosen arbitrarily small, this is
impossible, so we have arrived at a contradiction.

\end{proof}

\bibliographystyle{plain}
\bibliography{main}

\begin{thebibliography}{10}

\bibitem{aidekon-et-al}
E.~A{\"{\i}}d{\'e}kon, J.~Berestycki, {\'E}.~Brunet, and Z.~Shi.
\newblock Branching {B}rownian motion seen from its tip.
\newblock {\em Probab. Theory Related Fields}, 157(1-2):405--451, 2013.

\bibitem{aidekon:killed}
Elie A{\"{\i}}d{\'e}kon.
\newblock Tail asymptotics for the total progeny of the critical killed
  branching random walk.
\newblock {\em Electron. Commun. Probab.}, 15:522--533, 2010.

\bibitem{arguin-et-al}
L.-P. Arguin, A.~Bovier, and N.~Kistler.
\newblock Genealogy of extremal particles of branching {B}rownian motion.
\newblock {\em Comm. Pure Appl. Math.}, 64(12):1647--1676, 2011.

\bibitem{arguin-et-al:MR3129797}
Louis-Pierre Arguin, Anton Bovier, and Nicola Kistler.
\newblock The extremal process of branching {B}rownian motion.
\newblock {\em Probab. Theory Related Fields}, 157(3-4):535--574, 2013.

\bibitem{athreya-ney}
Krishna~B. Athreya and Peter~E. Ney.
\newblock {\em Branching processes}.
\newblock Springer-Verlag, New York, 1972.
\newblock Die Grundlehren der mathematischen Wissenschaften, Band 196.

\bibitem{bachmann}
Markus Bachmann.
\newblock Limit theorems for the minimal position in a branching random walk
  with independent logconcave displacements.
\newblock {\em Adv. in Appl. Probab.}, 32(1):159--176, 2000.

\bibitem{benjamini-peres}
Itai Benjamini and Yuval Peres.
\newblock Markov chains indexed by trees.
\newblock {\em Ann. Probab.}, 22(1):219--243, 1994.

\bibitem{biggins}
J.~D. Biggins.
\newblock Martingale convergence in the branching random walk.
\newblock {\em J. Appl. Probability}, 14(1):25--37, 1977.

\bibitem{bramson-zeitouni}
Maury Bramson and Ofer Zeitouni.
\newblock Tightness for a family of recursion equations.
\newblock {\em Ann. Probab.}, 37(2):615--653, 2009.

\bibitem{bramson}
Maury~D. Bramson.
\newblock Maximal displacement of branching {B}rownian motion.
\newblock {\em Comm. Pure Appl. Math.}, 31(5):531--581, 1978.

\bibitem{crump-gillespie}
Kenny~S. Crump and John~H. Gillespie.
\newblock The dispersion of a neutral allele considered as a branching process.
\newblock {\em J. Appl. Probability}, 13(2):208--218, 1976.

\bibitem{etheridge}
Alison~M. Etheridge.
\newblock {\em An introduction to superprocesses}, volume~20 of {\em University
  Lecture Series}.
\newblock American Mathematical Society, Providence, RI, 2000.

\bibitem{evans-perkins}
Steven~N. Evans and Edwin Perkins.
\newblock Measure-valued {M}arkov branching processes conditioned on
  nonextinction.
\newblock {\em Israel J. Math.}, 71(3):329--337, 1990.

\bibitem{feller}
William Feller.
\newblock {\em An introduction to probability theory and its applications.
  {V}ol. {II}.}
\newblock Second edition. John Wiley \& Sons Inc., New York, 1971.

\bibitem{fisher}
R.~A. Fisher.
\newblock The wave of advance of advantageous genes.
\newblock {\em Ann. Eugenics}, 7:355--369, 1937.

\bibitem{fleischman-sawyer}
J.~Fleischman and S.~Sawyer.
\newblock Maximum geographic range of a mutant allele considered as a subtype
  of a brownian branching random field.
\newblock {\em PNAS}, 76(2):872--875, 1979.

\bibitem{ito-mckean}
Kiyosi It{\^o} and Henry~P. McKean, Jr.
\newblock {\em Diffusion processes and their sample paths}.
\newblock Springer-Verlag, Berlin-New York, 1974.
\newblock Second printing, corrected, Die Grundlehren der mathematischen
  Wissenschaften, Band 125.

\bibitem{kesten}
Harry Kesten.
\newblock Branching random walk with a critical branching part.
\newblock {\em J. Theoret. Probab.}, 8(4):921--962, 1995.

\bibitem{lalley-sellke}
S.~P. Lalley and T.~Sellke.
\newblock A conditional limit theorem for the frontier of a branching
  {B}rownian motion.
\newblock {\em Ann. Probab.}, 15(3):1052--1061, 1987.

\bibitem{lalley:spatial}
Steven~P. Lalley.
\newblock Spatial epidemics: critical behavior in one dimension.
\newblock {\em Probab. Theory Related Fields}, 144(3-4):429--469, 2009.

\bibitem{legall}
Jean-Fran{\c{c}}ois Le~Gall.
\newblock {\em Spatial branching processes, random snakes and partial
  differential equations}.
\newblock Lectures in Mathematics ETH Z\"urich. Birkh\"auser Verlag, Basel,
  1999.

\bibitem{mckean:kpp}
H.~P. McKean.
\newblock Application of {B}rownian motion to the equation of
  {K}olmogorov-{P}etrovskii-{P}iskunov.
\newblock {\em Comm. Pure Appl. Math.}, 28(3):323--331, 1975.

\bibitem{roelly-coppoletta-rouault}
Sylvie Roelly-Coppoletta and Alain Rouault.
\newblock Processus de {D}awson-{W}atanabe conditionn\'e par le futur lointain.
\newblock {\em C. R. Acad. Sci. Paris S\'er. I Math.}, 309(14):867--872, 1989.

\bibitem{spitzer}
Frank Spitzer.
\newblock {\em Principles of random walks}.
\newblock Springer-Verlag, New York, second edition, 1976.
\newblock Graduate Texts in Mathematics, Vol. 34.

\bibitem{zheng}
Xinghua Zheng.
\newblock Critical branching random walks with small drift.
\newblock {\em Stochastic Process. Appl.}, 120(9):1821--1836, 2010.

\end{thebibliography}

\end{document}